\newtheorem{theorem}{Theorem}
\newtheorem{lemma}{Lemma}
\newtheorem{rem}{Remark}
\newtheorem{example}{Example}[section]
\newproof{proof}{Proof}
\numberwithin{equation}{section}
\newcommand{\vx}{\textbf{x}}
\newcommand{\vy}{\textbf{y}}
\newcommand{\vp}{\textbf{p}}
\newcommand{\vF}{\textbf{F}}
\newcommand{\vz}{\textbf{z}}
\newcommand{\vW}{\textbf{W}}
\newcommand{\be}{\begin{equation}}
\newcommand{\ee}{\end{equation}}
\begin{document}

\begin{frontmatter}

%% Title, authors and addresses

\title{Accuracy and Architecture Studies of Residual Neural Network solving Ordinary Differential Equations}

%% use the tnoteref command within \title for footnotes;
%% use the tnotetext command for the associated footnote;
%% use the fnref command within \author or \address for footnotes;
%% use the fntext command for the associated footnote;
%% use the corref command within \author for corresponding author footnotes;
%% use the cortext command for the associated footnote;
%% use the ead command for the email address,
%% and the form \ead[url] for the home page:
%%
%% \title{Title\tnoteref{label1}}
%% \tnotetext[label1]{}
%% \author{Name\corref{cor1}\fnref{label2}}
%% \ead{email address}
%% \ead[url]{home page}
%% \fntext[label2]{}
%% \cortext[cor1]{}
%% \address{Address\fnref{label3}}
%% \fntext[label3]{}
%

%% use optional labels to link authors explicitly to addresses:
%% \author[label1,label2]{<author name>}
%% \address[label1]{<address>}
%% \address[label2]{<address>}
\author[label1]{Changxin Qiu}
\ead{cxqiu@iastate.edu}
\author[label1]{Aaron Bendickson\fnref{label4}}
\ead{atb@iastate.edu}
\author[label2]{Joshua Kalyanapu\fnref{label4}}
\ead{kalyan99@iastate.edu}
\author[label1]{Jue Yan\corref{cor1}\fnref{label3}}
\ead{jyan@iastate.edu}

\cortext[cor1]{Corresponding author}
\fntext[label3]{Research work of the author is supported by National Science Foundation grant DMS-1620335 and Simons Foundation grant 637716.}
\fntext[label4]{Research work of the authors are partially supported by National Science Foundation grant DMS-1457443.}
\address[label1]{Department of Mathematics, Iowa State University, Ames, 50011, USA}
\address[label2]{Department of Electrical and Computer Engineering, Iowa State University, Ames, 50011, USA}

\begin{abstract}
%% Text of abstract
In this paper we consider utilizing a residual neural network (ResNet) to solve ordinary differential equations. Stochastic gradient descent method is applied to obtain the optimal parameter set of weights and biases of the network. We apply forward Euler, Runge-Kutta2 and Runge-Kutta4 finite difference methods to generate three sets of targets training the ResNet and carry out the target study. The well trained ResNet behaves just as its counterpart of the corresponding one-step finite difference method. In particular, we carry out (1) the architecture study in terms of number of hidden layers and neurons per layer to find the optimal ResNet structure; (2) the target study to verify the ResNet solver behaves as accurate as its finite difference method counterpart; (3) solution trajectory simulation. Even the ResNet solver looks like and is implemented in a way similar to forward Euler scheme, its accuracy can be as high as any one step method. A sequence of numerical examples are presented to demonstrate the performance of the ResNet solver.
\end{abstract}

\begin{keyword}
Deep neural network \sep Residual network \sep Ordinary differential equations
%% keywords here, in the form: keyword \sep keyword

%% MSC codes here, in the form: \MSC code \sep code
%% or \MSC[2008] code \sep code (2000 is the default)

\end{keyword}

\end{frontmatter}

%%
%% Start line numbering here if you want
%%
%\linenumbers

%% main text%%%%%%%%%%%%%%%%%%%%%%%%%%%%%%%%%%%%%%%%%%%%%%
\section{Introduction}
\label{S:1}

In recent years, there is a rapid growth in the study of theory and applications of machine learning with neural networks. This growth has been driven by advances in deep learning, which have achieved tremendous success such as large-scale image classification, text, videos and speech recognition, see \cite{LeCun-Bengio-1995,Bengio-2009,Krizhevsky-Sutskever-Hinton-2012,LeCun-Bengio-Hinton-2015,Wang-Osher2020}. In the last few years, exciting new works explore the connection between differential equations and machine learning. For example, the works of \cite{E-2017,Chaudharip-Osher2017,Haber-Ruthotto2018,Chang-Meng-Haber-Ruthotto-Begert-Holtham-2018} relate deep learning problems for general data to ordinary differential equations; we have the partial differential equations (PDEs) motivated deep neural networks architecture study of \cite{Ruthotto-Haber2020}; multi step method motivated architecture study of \cite{Lu-Zhong-Li-Dong-2017} and multi-grid method motivated convolutional neural network of \cite{He-Xu2019}.

Very recently, neural networks have also been explored to numerically solve PDEs. One popular class is to use neural network to represent the solution and take advantage of the approximation power of neural networks, for which we refer to classical results of \cite{Cybenko-1989,HORNIK-1990,Barron-1993,Pinkus-1999}. Here we list the early works of  \cite{Lagaris-Likas-Fortiadis1998,RUDD2015}, the popular  physics-informed methods of \cite{Raissi-Perdikaris-Karniadakis-2019} and recent works of \cite{Sirignano-Spiliopoulos-2018,Long-Lu-Dong-2019,Winovich-Ramani-Lin2019}, etc. They have the neural network input vector as the independent variables $(t,x)$ and output vector as the approximation of PDEs solution $u(t,x)$. Loss function usually involve the mean square error of sample collocation points satisfying the PDEs and have initial value and boundary conditions errors included also to increase the accuracy. Such methods have advantages of automatic differentiation and mess free, thus can be adapted to solve high dimensional PDEs \cite{Sirignano-Spiliopoulos-2018,Beck-E-Jentzen-2019}. 

On the other hand, time $t$ and spatial $x$ variables are not treated differently in the neural networks and the data are trained over a specific window of time and spatial domain. Thus the neural network solution can not be used to evolve approximation at a later time that is out of the domain of training data. Such methods are more suitable for elliptic type PDEs, see \cite{Fan-Lin-Ying-Zepeda-2019,khoo-lu-ying-2020,Li-Lu-Mao-2020}. For time dependent parabolic and hyperbolic PDEs with solution evolution in time, neural networks may be designed on or relate to traditional numerical methods such as finite difference and finite element methods. For numerical methods solving time dependent PDEs, we usually apply the method of lines and consider discretization in space first, then discretize in time after to obtain an explicit or implicit method. We are interested in applying neural networks solving PDEs. In this paper, we first explore neural networks to solve ordinary differential equations (ODE) without spatial variables.

In this article, we consider following autonomous ordinary differential equation system initial value problem
\begin{equation}\label{intro:eq-ODE}
    \frac{d \vx}{d t}=\vF(\vx(t)), ~~\vx(t_0)=\vx_0, 
\end{equation}
where $\vx \in R^n$ are the state variables. Integrate the dynamic system (\ref{intro:eq-ODE}) from $t_0$ to $t_0+\Delta$, where $\Delta$ is an increment or time lag, we obtain the following integral format of the ODE system
\begin{equation}\label{intro:ODE-integral}
    \vx(t_0+\Delta)-\vx(t_0)=\int_{t_0}^{t_0+\Delta} \vF(\vx(t)) ~dt.
\end{equation}
We consider $\vx(t_0)$ as the neural network input and have the  network output to approximate $\vx(t_0+\Delta)$. And we apply a simple feed-forward network to approximate the integral $\int_{t_0}^{t_0+\Delta} \vF(\vx(t)) ~dt$ or the integral of the dynamics of the ODE system. The format exactly matches the well known residual neural networks (ResNets) of \cite{He-Zhang-Ren-Sun-2016}. In a word, we train the network to obtain an optimal set of weight matrices and biases such that we can repeatedly call the ResNet solver to generate a discrete set of points values $\{\vp^k \approx \vx(t_k)\}_{k=0}^m$, which approximates the solution curve $\vx(t)$ or trace just as a finite difference method of forward Euler or Runge-Kutta scheme. 

Even the format of (\ref{intro:ODE-integral}) directly connects to the forward Euler scheme. It turns out applying Resnet to solve ODE system (\ref{intro:eq-ODE}) can be as accurate as any high order methods. For example, with $\Delta=0.1$ we observe the error of one step ResNet can be as small as $O(\Delta^5)\approx 10^{-5}$, even the ResNet solver is implemented in the way similar to first order forward Euler scheme. Numerical tests show the accuracy of the ResNet solver is more related to the quality of the target training the network.  

Ever since its introduction, residual networks become more and more popular and are considered as state-of-the-art models in numerous machine learning tasks, e.g. we have the Wide ResNets \cite{Zagoruyko-Komodakis-2016}, the DenseNets \cite{Huang-Liu-Van-Weinberger-2017}, the ResNeXt \cite{Xie-Girshick-2017} and many others.  %\cite{Targ-Almedia-Lyman-2016,Hardt-Ma-2016,Veit-Wilber-Belongie-2016,He-Gkioxari-Dollar-Girshick-2017,Toderici-2017,Xiong-2017,Chang-Meng-Haber-Ruthotto-Begert-Holtham-2018,Gomez-Ren-Urtasun-Grosse-2017}. 
With the development of ResNet and its variants on various applications, several attempts are carried out to explain ResNets through theoretical analysis and empirical results %\cite{Veit-Wilber-Belongie-2016, Huang-Sun-Liu-2016,Greff-Srivastava-Schmidhuber-2016,Jastrzebski-2017,
\cite{Haber-Ruthotto-Holtham-2017,E-2017}. Compared to the practical success of ResNets, there is little discussion on the architecture studies of ResNets. It remains a mystery whether there is a general principle to guide the design of number of hidden layers and neurons per layer of an effective network.

In this paper, we consider simple feed forward network to approximate the integral $\int_{t_0}^{t_0+\Delta} \vF(\vx(t)) ~dt$ of (\ref{intro:ODE-integral}) in terms of rectangular arrangements with width as the number of hidden layers and height as the number of neurons per layer. One objective is to vary the number of layers and neurons per layer and numerically find out the optimal architecture setting, for which the measurement is based on the error between ResNet output and the reference solution. In Figure \ref{intro:fig} we list the architecture study of a linear ODE system (subfigure (a)) and a nonlinear ODE system (subfigure (c)). Based on the mean $L_2$ error, we pick the setting of one hidden layer and six neurons as the optimal architecture for the linear nodal sink ODE and pick the setting of two hidden layers and forty neurons per layer as the optimal architecture for the nonlinear damping pendulum ODE.
\begin{figure}[htbp]
\centering
\subfigure[\tiny{Architecture study of nodal sink linear ODE}]{
\includegraphics[width=0.33\linewidth]{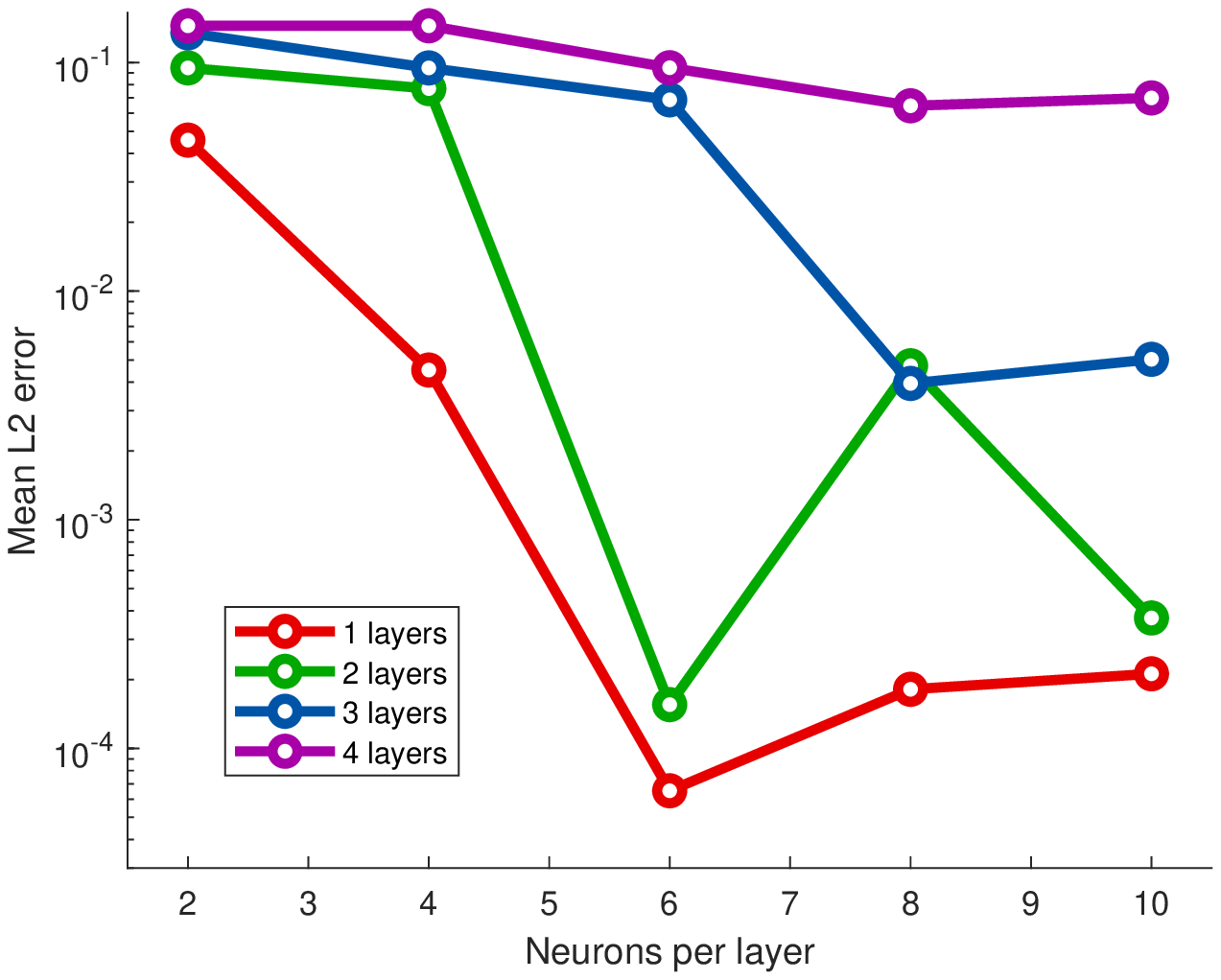}
%\caption{pic1}
}
\subfigure[\tiny{Target study of nodal sink linear ODE}]{
\includegraphics[width=0.33\linewidth]{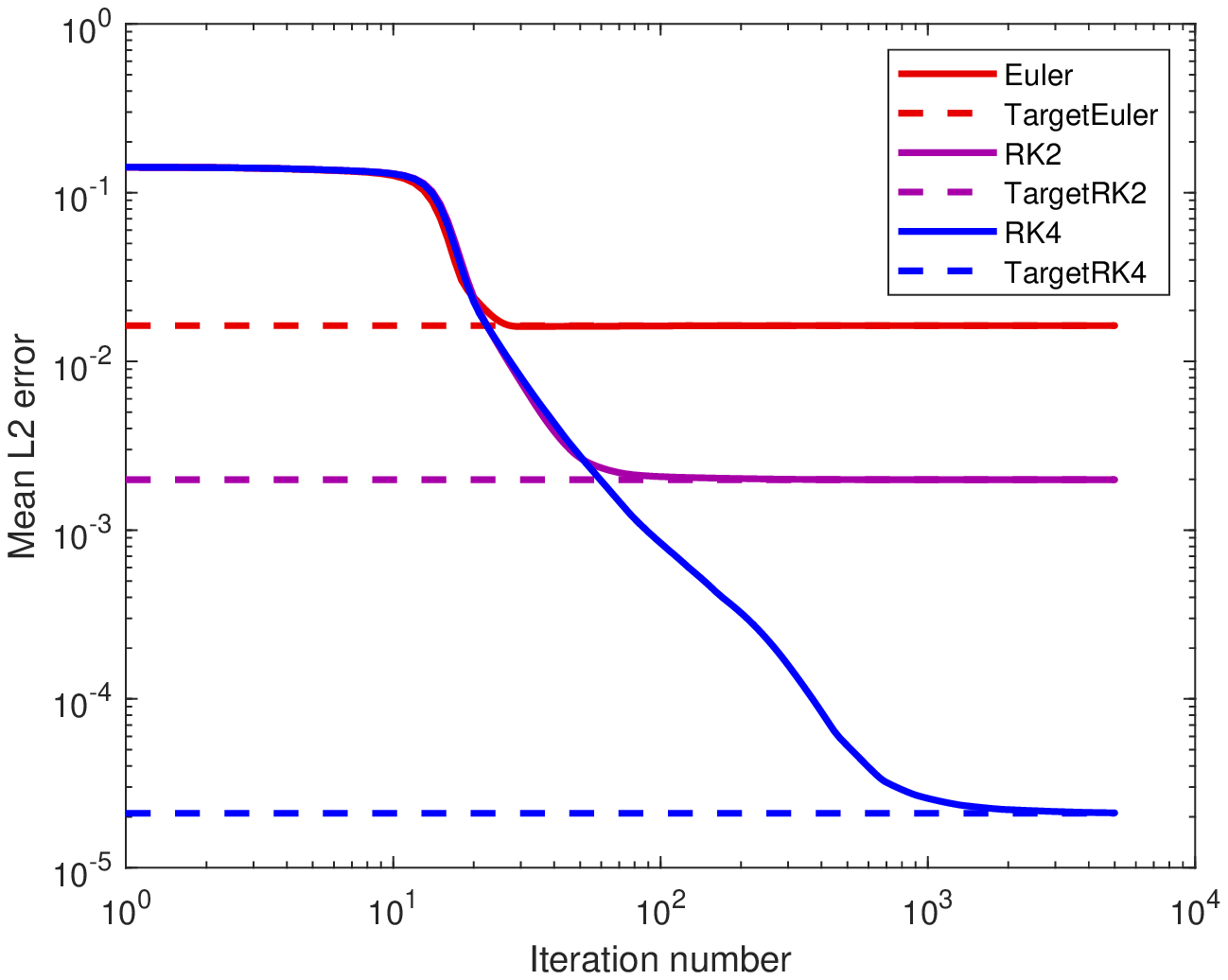}
%\caption{}
}\quad\quad
\subfigure[\tiny{Architecture study of damping pendulum}]{
\includegraphics[width=0.33\linewidth]{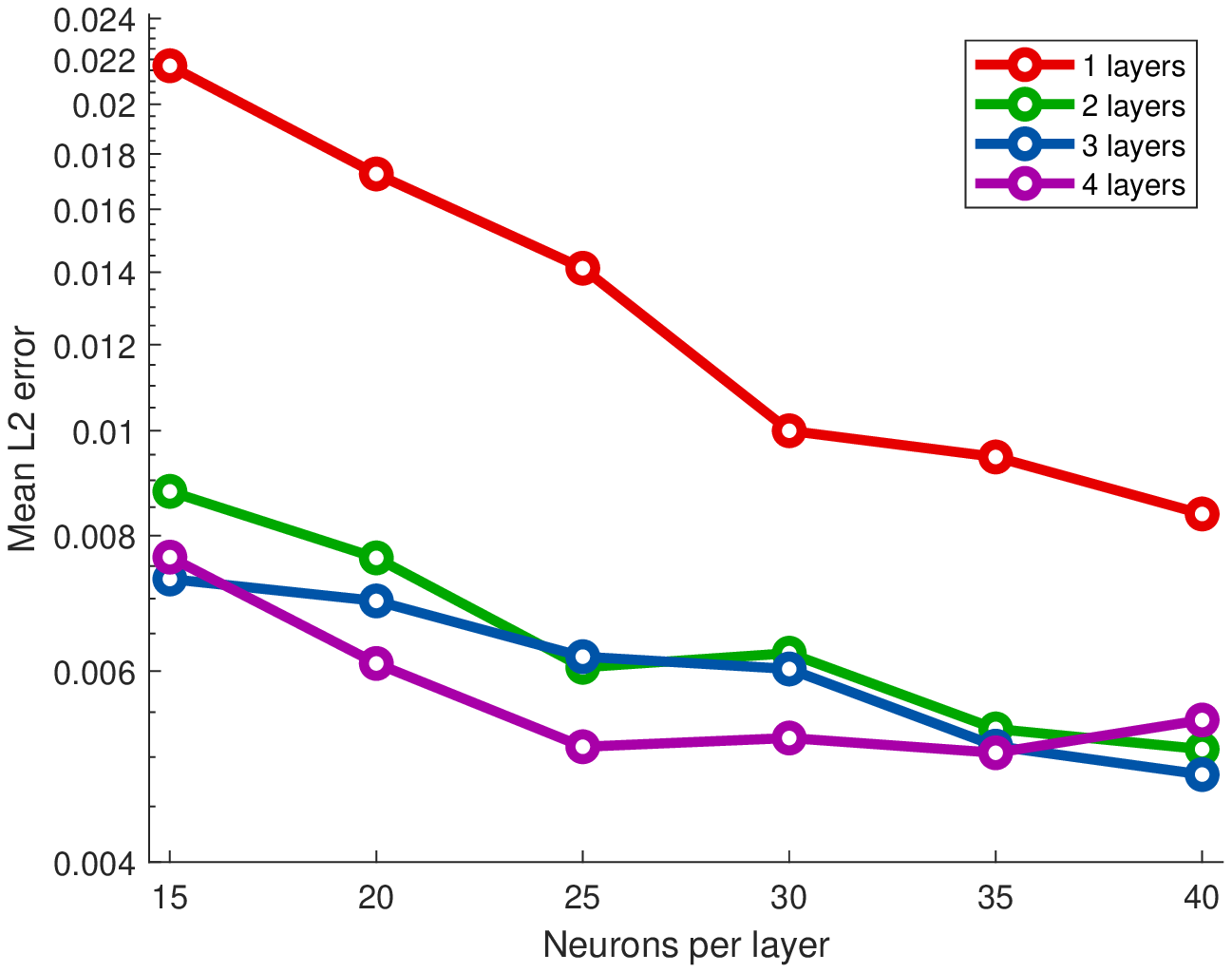}
%\caption{}
}
\quad
\subfigure[\tiny{Target study of damping pendulum}]{
\includegraphics[width=0.33\linewidth]{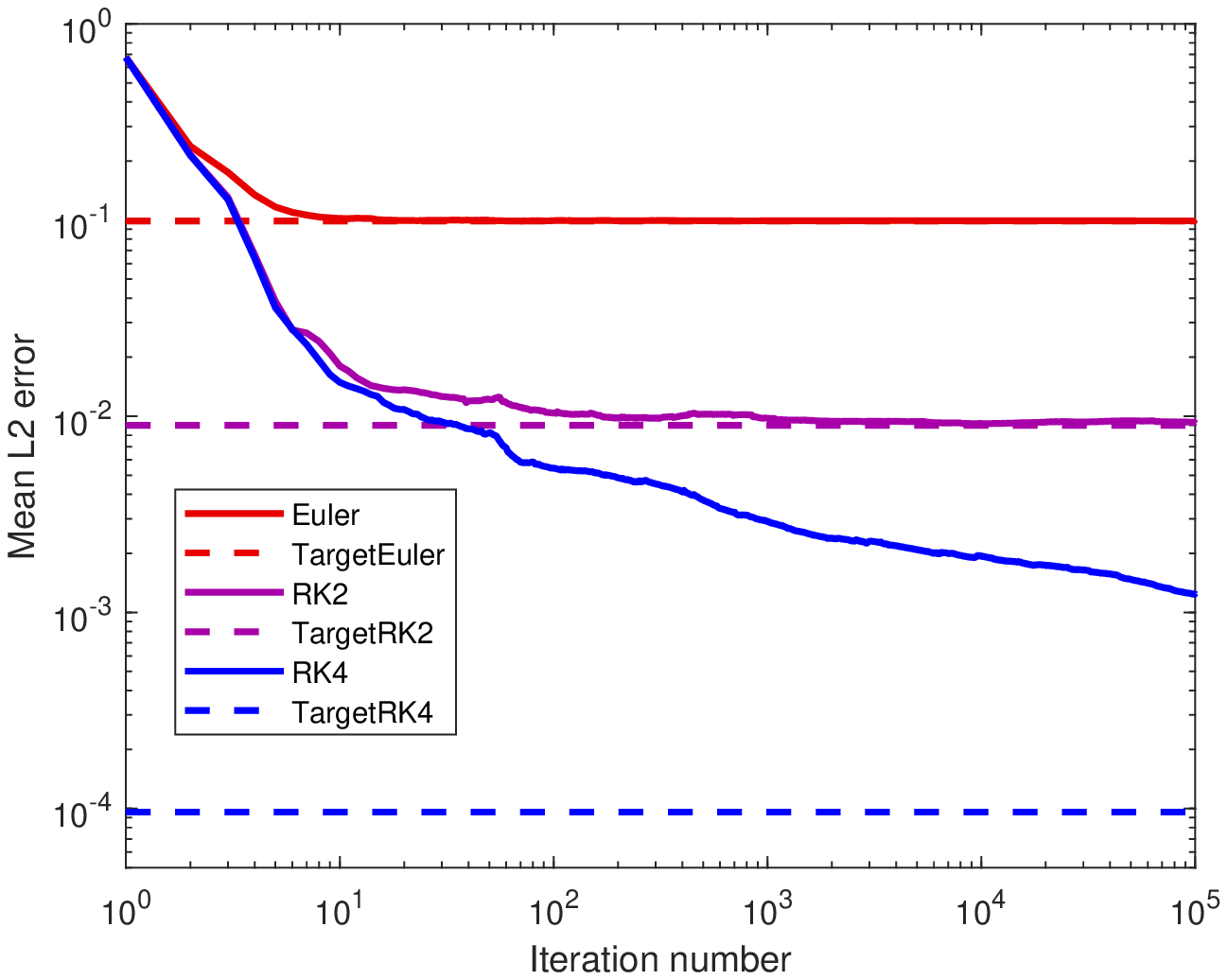}
%\caption{}
}
\caption{Linear and nonlinear ODE systems
} 
\label{intro:fig}
\end{figure}

The second goal is to investigate the accuracy of ResNet solver. For nonlinear ODE system dynamics learning, it turns out taking square error as the loss function and applying stochastic gradient descent method can train the ResNet network very well. To find out the effectiveness of the ResNet solver, we consider two errors. The first group of errors compute the mean $L_2$ errors between ResNet output and the reference solution, denoted as the solid curves (subfigure (b) and (d) for sample linear and nonlinear ODE systems) in Figure \ref{intro:fig}. The second group of errors calculate the mean $L_2$ errors between the training targets and the reference solution, represented as the dashed straight lines in Figure \ref{intro:fig}. We observe the solid curves merge into dashed lines over the number of iterations, which imply the convergence of ResNet learning process. 

In this paper, we consider three resources to generate the targets used in training the ResNet. Specifically We study first order forward Euler, second order Runge-Kutta2 and fourth order Runge-Kutta4 methods with mesh size $\Delta$ to generate the targets. The ResNet can be trained very well to learn the dynamics and the accuracy of ResNet ODE solver is dominated by the accuracy of the target used in training. If the ODE solutions are regular and smooth, we obtain the error orders of $O(\Delta^2)$, $O(\Delta^3)$ and $O(\Delta^5)$ for the ResNet solvers trained from first order forward Euler, second order Runge-Kutta2 and fourth order Runge-Kutta4 methods with mesh size $\Delta$, see the right two figures in Figure \ref{intro:fig} with $\Delta=0.1$. In a word, the ResNet ODE solver can be successfully trained and replicate the three finite difference methods.  

For decades neural networks have been used to model dynamical systems \cite{Chen-Billings-Grant1990,GonzalezGarcia-Martinez-Kevrekidis1998,MILANO2002}. Recent advances in deep learning improve the capability of neural network methods and attract new studies in the field. For example, we have recent works of  \cite{Pathak-Lu-Hunt-Girvan-Ott-2017,Vlachas-Byeon-Wan-Sapsis-Koumoutsakos-2018} exploring reccurrent neural networks from time series data to capture dynamic system chaotic behavior for forecasting; and  \cite{Mardt-Pasquali-Wu-Noe-2018,Yeung-Kundu-Hodas-2019} on nonlinear systems approximation with Koopman operator theory . 

Another group of articles that are closely related to ours focus on applying neural networks to recover the dynamic system (the expression of $\vF(\cdot)$ in (\ref{intro:eq-ODE})) from observed data \cite{raissi2018multistep,Chen-Rubanova-Bettencourt-Duvenaud-2018,Rudy-Kutz-Brunton-2019,sun-zhang-schaeffer2019,Reshniak-Webster2019,xie-zhang-webster-2019,keller-du-2020}, in which points values data on one trajectory $\vx(t)$ are collected and used in training. Classical numerical ODE solvers such as Runge-Kutta methods or Adam-version multistep methods are used in the definition of loss function to train the feed forward neural networks. In our work, learning data are the collection of short segments with time lag $\Delta$ passed and initial locations $\vx(t_0)$ randomly generated over a domain of interest. Our work is similar to \cite{Qin-Wu-Xiu-2019} by Qin. et. al. such that the neural network is applied to approximate the integral $\int_{t_0}^{t_0+\Delta} \vF(\vx(t)) ~dt$ instead of $\vF(\cdot)$ and the network parameters are trained from supervised learning.

Since the integral format of (\ref{intro:ODE-integral}) is the weak formulation of the ODE system (\ref{intro:eq-ODE}), the accuracy of the neural network solver depends how well the feed forward network approximates $\int_{t_0}^{t_0+\Delta} \vF(\vx(t)) ~dt$. It turns out the accuracy of the ResNet ODE solver is more related to the quality of the training target. Even the ResNet ODE solver looks like and is implemented as a first order forward Euler scheme, its accuracy can be as high as any order. Thus we can not simply treat ResNet as a discrete version of forward Euler scheme \cite{Lu-Zhong-Li-Dong-2017}.

The organization of the paper is as follows. In \S\ref{S:2}, we present the problem setup, notations and residual neural network as an ODE solver. In \S \ref{S:3}, we revisit the errors between training targets and exact solution to obtain error orders of $O(\Delta^2)$, $O(\Delta^3)$ and $O(\Delta^5)$ for targets generated from forward Euler, Runge-Kutta2 and Runge-Kutta4 methods. Sequence of numerical experiments are presented in \S \ref{S:4} to verify the observed behavior of ResNet solvers. Finally, we end in \S \ref{S:5} for conclusion.

%example for Table
%\begin{table}[h]
%\begin{tabular}{l l l}
%\hline
%\textbf{Treatments} & \textbf{Response 1} & \textbf{Response 2}\\
%\hline
%Treatment 1 & 0.0003262 & 0.562 \\
%Treatment 2 & 0.0015681 & 0.910 \\
%Treatment 3 & 0.0009271 & 0.296 \\
%\hline
%\end{tabular}
%\caption{Table caption}
%\end{table}

%example for figure
%\begin{figure}[h]
%\centering\includegraphics[width=0.4\linewidth]{placeholder}
%\caption{Figure caption}
%\end{figure}

%%%%%%%%%%%%%%%%%%%%%%%%%%%%%%%%%%%%%%%%%%%%%%%%%%%%%%%%%
\section{Problem setup, notations and residual neural network}
\label{S:2}

\subsection{Setup and notations}
\label{S:2.1}
\vspace{.1in}

We consider solving following ordinary differential equation (ODE) system initial value problem
\begin{equation}
    \frac{d \vx}{d t}=\vF(\vx(t)), ~~\vx(t_0)=\vx_0, \label{eq:ODE}
\end{equation}
where $\vx \in R^n$ are the state variables. 
We assume the right hand side vector function $\vF(\vx(t))$ is Lipschitz continuous, with 
\begin{equation}
\|\vF(\vx^1)-\vF(\vx^2)\|_2\leq L\|\vx^1-\vx^2\|_2.\label{eq:Lipschitz}
\end{equation}
Here $\|\cdot\|_2$ denotes the vector $L^2$ norm. 
Integrating the dynamic system (\ref{eq:ODE}) from $t_0$ to $t_0+\Delta$, where $\Delta$ is an increment or time lag, we obtain the following integral format of the ODE system
\be\label{eq:ODE-integral}
\vx(t_0+\Delta)-\vx(t_0)=\int_{t_0}^{t_0+\Delta} \vF(\vx(t)) ~dt.
\ee
In this paper, we apply residual neural network as an solver to approximate \eqref{eq:ODE-integral}.

The learning data are collected in the form of pairs. Each data pair refers to the solution states at two different time locations along one trajectory. The learning data set is given as
\begin{equation}\label{nn:data-pairs}
S =\left\{\left(\vy_{j}^1,\vy_{j}^2\right)\,|\, \vy_{j}^1\in D\right\}_{j=1}^{J},
\end{equation}
where $J$ denotes the total number of data pairs and $D$ is the domain of interest, from which the initial states of the learning data pairs are collected. For each pair, we have
\begin{equation} \label{nn:data-pairs-targets}
\vy_{j}^1= \vx_j(t_0), ~~~\vy_j^2\approx\vx_j(t_0+\Delta),\quad j=1, \dots, J.
\end{equation}
Again, we have $\Delta$ denote the time lag between the two states, and taken as a constant for all $j$ throughout this paper. Initial state of the learning data $\vy_j^1$ can be randomly generated from the domain of interest $D$, i.e., generated with uniform distribution. We have $\vy_j^2$ denoting the learning target, which is obtained from finite difference scheme approximation with time step size $\Delta$. In this article, we study three explicit methods of forward Euler, Runge-Kutta 2 and Runge-Kutta 4 to generate the target data.

%%%%%%%%%%%%%%%%%%%%%%%%%%%%%%%%%%%%%%%%%%%%%%%%%%%%%%%%%
%%%%%%%%%%%%%%%%%%%%%%%%%%%%%%%%%%%%%%%%%%%%%%%%%%%%%%%%%%

\subsection{One-step Residue Network (ResNet)}
\label{S:2.2}

%{\bf\emph{One-step ResNet as an ODE solver}}

\vspace{.1in}
In this section, we present the residual neural network (ResNet) as a one-step method solving the ordinary differential equation system (\ref{eq:ODE}). The abstract goal of machine learning is to find a function $\mathcal{N}: R^n \to R^n$ such that $\mathcal{N}(\cdot ; \Theta)$ accurately predicts the state $\vx(t_0+\Delta)$, given the initial state $\vx(t_0)$ and the time lag $\Delta$. The function $\mathcal{N}(\cdot ; \Theta)$ is parameterized by weight matrices and biases. The optimal parameter set $\Theta$ will be obtained by training the network intensively over the given data set.

In this paper, we consider a standard fully connected feedforward neural network  (FNN) \cite{Hornik-1991,Leshno-Lin-1993,Pinkus-1999} with $M$ layers. We have $M\geq 3$, since the input and output vectors of the network are considered as the first and last layer. Among the total $M$ layers, the interior $(M-2)$ are the hidden layers. Thus the minimum structure of the neural network involves one hidden layer with $M=3$. We refer to Figure \ref{fig:DNN_structure} for the structure of a standard feedForward neural network. Having $n_i$ ($i =1, \cdots, M$) denote the number of neurons in each layer. The first layer is the input vector with $n_1$ as its dimension and the last layer is the output vector with $n_M$ as its dimension. With the input and output vectors as either the state or the approximation of the state $\vx(\cdot)$ of the ODE system \eqref{eq:ODE}, we have $n_1=n_M=n$. Given the network input $\vp^{in}\in R^n$, the output of the FNN network is denoted as

\begin{equation*}
    \vp^{out} = \mathcal{N}(\vp^{in}; \Theta), \label{training_formula_1}
\end{equation*}
where $\Theta$ is the parameter set including all parameters in the network, i.e., weight matrices and biases connecting all layers.

\begin{figure}[h]
\centering
\subfigure[FeedForward neural network]{
\includegraphics[width=0.45\linewidth]{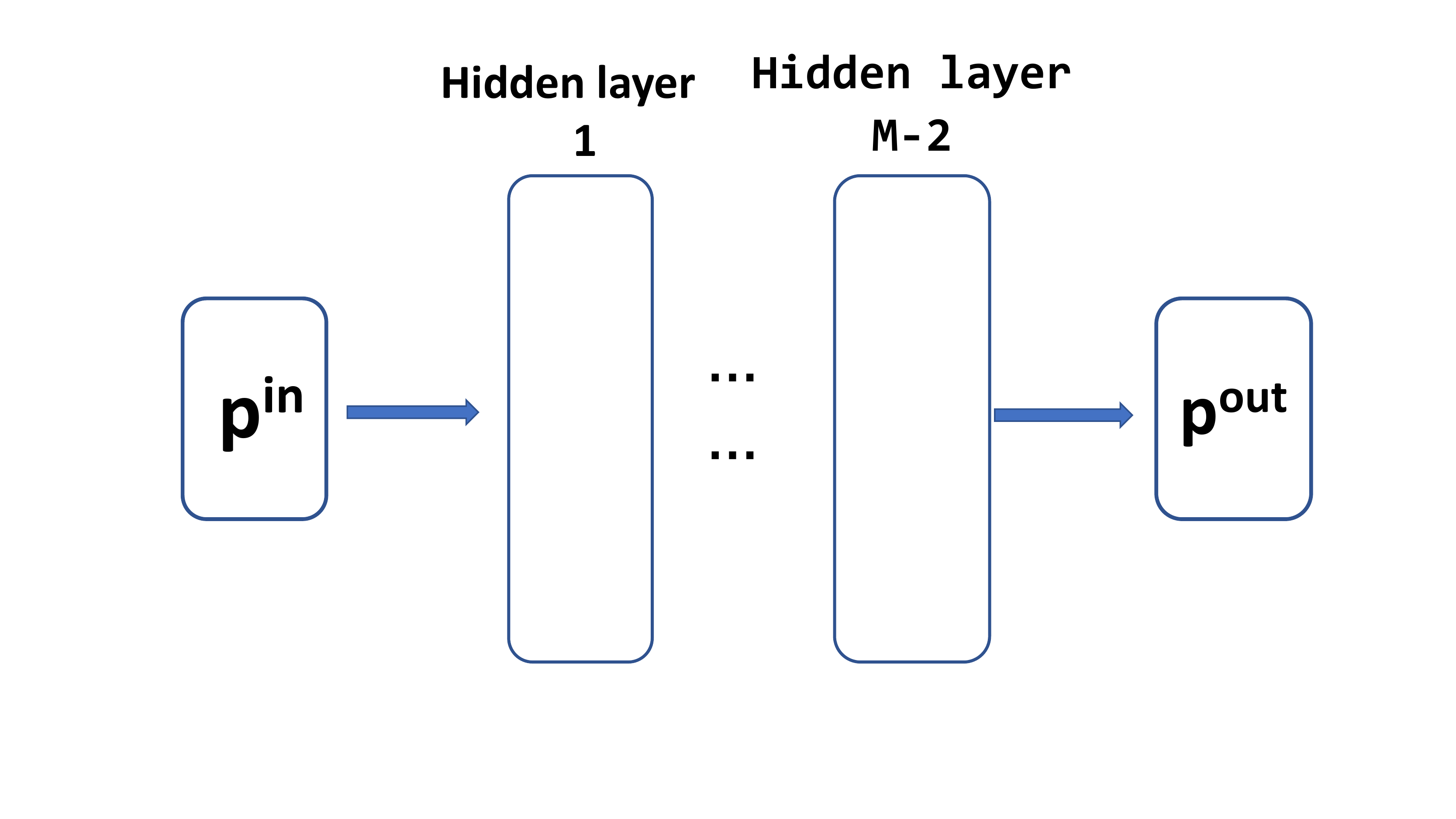}
}
\subfigure[Residual Network]{
\includegraphics[width=0.45\linewidth]{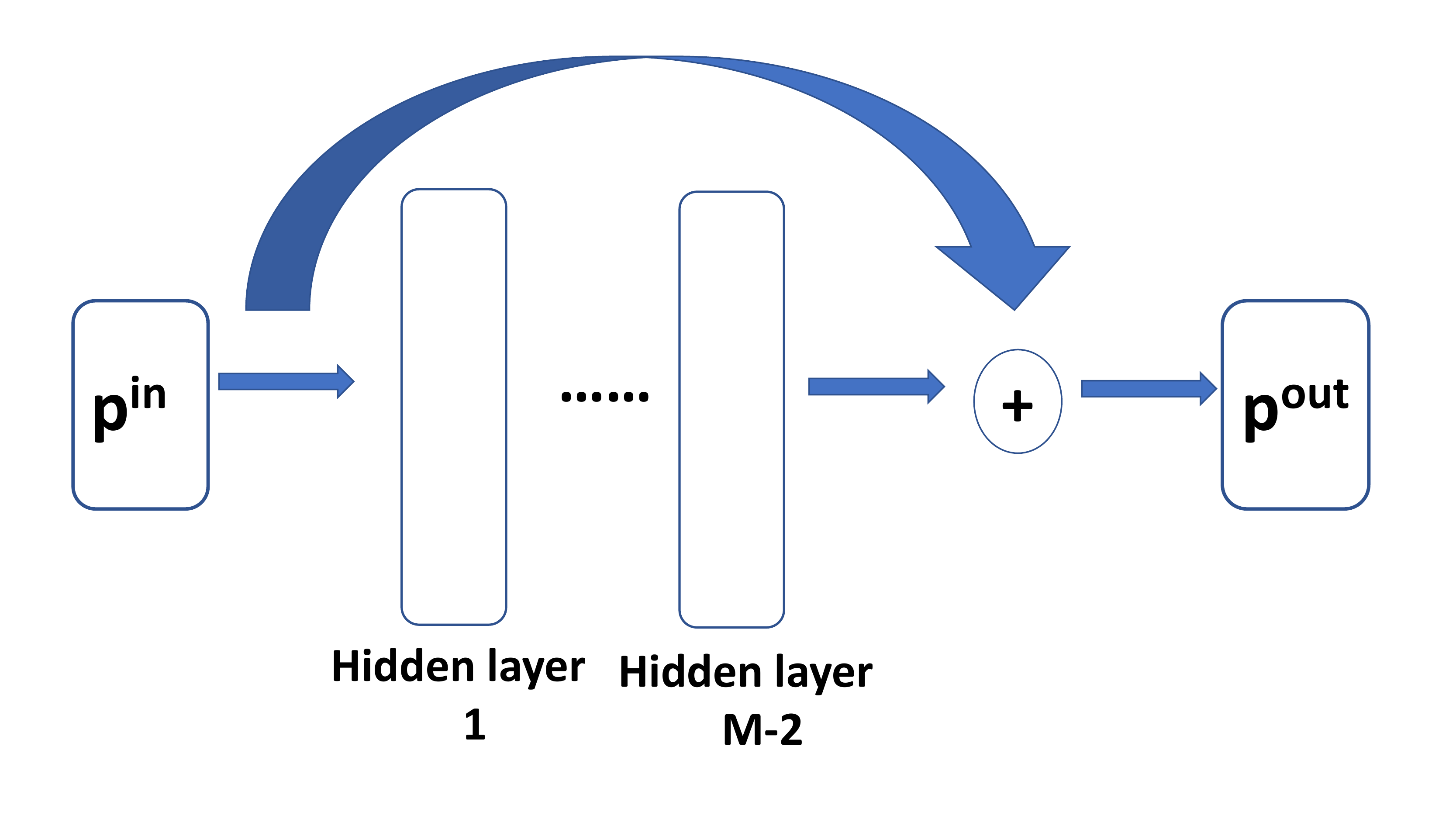}
%\caption{pic1}
}
\caption{Architecture of feedForward and residual neural network}
\label{fig:DNN_structure}
\end{figure}

The idea of residual neural network (ResNet) is to further introduce an identity operator and have the network effectively approximate the ``residue'' of the input-output map.
The structure of ResNet is also illustrated in Figure \ref{fig:DNN_structure} to compare with feedFoward neural network. Now the ResNet consists of $M$ fully connected layers and an identity operator that further "adds" the input $\vp^{in}$ vector into the output $\vp^{out}$. With the identity operator introduced, the resulting ResNet produces the following neural network
\begin{equation}
    \vp^{out} = \vp^{in}+\mathcal{N}(\vp^{in}; \Theta), %~~\vp^{in}=\vy^1_j 
\label{Resnet_formula_1}
\end{equation}
where $\Theta$ again denotes the collection of weight matrices and bias parameters.% Previously ResNet has been viewed as the Euler forward time integrator \cite{Chang_Meng_Haber_Tung_Begert-2018}. C

The core residual neural network $\mathcal{N}(\cdot; \Theta)$ still consists of $M$ layers. Each two consecutive layers is connected with an affine linear transformation and a point-wise nonlinear activation function.
%and aims at filtering the input features in a way that enables learning. 
The function or the mapping $\mathcal{N}(\cdot; \Theta)$ is a composition of following operators,
\begin{equation}
    \mathcal{N}(\cdot; \Theta)=(\sigma_M\circ \vW_{M-1})\circ \cdots \circ (\sigma_2\circ \vW_1). \label{neuralNetwork-structure}
\end{equation}
Here $\circ$ stands for operator composition. We have $\vW_i$ denoting the linear transformation operator or the weight matrix connecting the neurons from $i$-th layer to $(i+1)$-th layer. The parameter set is further augmented with the biases vectors. We have $\sigma_i: R\to R$ denoting the activation function ($i\geq 2$), which is applied to each neuron of the $i$-th layer in a component-wise fashion. There are many activation functions available and widely used, e.g., the sigmoid functions, the ReLU (rectified linear unit), etc. In this paper we apply ReLU function as the activation function. Specifically $\sigma_i=max(0,x)$ is applied between all layers, except to the output layer for which we have $\sigma_M(x) =x$. This is a common choice for deep learning neural network. Notice the activation function starts its application from the second layer $i=2$, since the input vector is regarded as the first layer.

The goal is to train the network to obtain an optimal parameter set $\Theta$, such that the ResNet can accurately approximate the $\Delta$-lag flow map $\vx(t)\to \vx(t+\Delta)$. This is achieved by applying \eqref{Resnet_formula_1}-\eqref{neuralNetwork-structure} with $\vp^{in}_j=\vy_j^1$ to obtain the network output $\vp_j^{out}$,  comparing with the target $\vy_j^2$, and then looping among the data set $S$ to minimize the error or the squared loss function
\begin{equation}
    L(\Theta)=\|\vp_j^{out}-\vy_j^2\|_2^2, \quad \forall j =1,\cdots,J. \label{loss_function1}
\end{equation}
The notation of \eqref{Resnet_formula_1} is abused here. We have $\vp_j^{out}$ referring to the $j$-th output corresponding to its input $\vp^{in}_j=\vy_j^1$, where $j$ is the index of the data pair in the learning data set $S$ of \eqref{nn:data-pairs}. Usually the mean squared error 
\begin{equation*}
    L(\Theta)=\frac{1}{J}\sum_{j=1}^{J} \|\vp_j^{out}-\vy_j^2\|_2^2, \label{loss_function2}
\end{equation*}
is applied as the loss function, referred as the  gradient descent method. To be more efficient, we apply \eqref{loss_function1} instead, which is referred as the stochastic or approximate gradient descent method. Finally, we minimize the error \eqref{loss_function1} iteratively to obtain the optimal parameter set $\Theta$, corresponding to a given tolerance $\epsilon$
\be\label{theta-iteration}
\Theta_{i}\longrightarrow \Theta_{i+1},\quad i=1,\cdots, K.
\ee
Here $K$ is the total number of iterations involved in the training. Again, we go through the whole data set of \eqref{loss_function2} over each iteration to update the parameter set. 

With the optimal parameter set $\Theta^{*}$ determined and the well trained neural network $\mathcal{N}(\cdot; \Theta^{*})$ available, the ResNet \eqref{Resnet_formula_1} can be viewed as an accurate ODE system \eqref{eq:ODE} solver with time step size $\Delta$. In a word, we can apply $\vp^0=\vx(t_0)$, and repeatedly call the ResNet \eqref{Resnet_formula_1} to generate a discrete set of points values approximating the ODE system
\begin{equation}
    \vp^{k+1} = \vp^{k}+\mathcal{N}(\vp^{k}; \Theta^{*}), \quad\vp^{0}=\vx(t_0). \label{Resnet_formula_2}
\end{equation}
The ResNet neural network \eqref{Resnet_formula_2} can be considered as an ODE solver to approximate the solution trajectory, which is similar to an explicit finite difference scheme. Comparing the one-step ResNet \eqref{Resnet_formula_1} and the integral format of the ODE system \eqref{eq:ODE-integral}, it is easy to see that a well trained network operator $\mathcal{N}(\cdot; \Theta^{*})$ is an approximation to the effective increment
\begin{equation}
    \mathcal{N}(\vx(t_0); \Theta^{*})\approx\int_{t_0}^{t_0+\Delta} \vF(\vx(t)) ~dt. \label{N_approximation}
\end{equation}

The initial states in the learning data set $S$ are randomly generated from the domain of interest. There is no error for the ResNet input $\vp_{j}^{in}=\vy_{j}^1$,  since we have $\vy_{j}^1= \vx_j(t_0)$ with $\vx_j(t_0)$ being the initial condition of the dynamic system \eqref{eq:ODE-integral}, where $j$ is the integer index of the learning data set $S$. Thus the error of the ResNet ODE solver mainly comes from the target $\vy_{j}^2$, see \eqref{loss_function1}. 
In the following section, we carry out fundamental analysis and show the accuracy of the ResNet depends on the accuracy or the quality of the target from the data pair \eqref{nn:data-pairs}.

%%%%%%%%%%%%%%%%%%%%%%%%%%%%%%%%%%%%%%%%%%%%%%%%%%%%%%%%%%
%%%%%%%%%%%%%%%%%%%%%%%%%%%%%%%%%%%%%%%%%%%%%%%%%%%%%%%%%
\section{Error estimate and objective of numerical studies}
\label{S:3}
In this section, we first estimate the error between the target $\vy_{j}^2$ and the exact solution $\vx_j(t_0+\Delta)$ or the state after time lag $\Delta$ passed, which turns out to be the dominant error for the ResNet solver. Then we describe the objective of numerical experiments in section \ref{S:4}.

\subsection{Error estimate}
\label{S:3.1}
\vspace{.1in}

The target $\vy_{j}^2$ is obtained from an explicit one-step method such as  Runge-Kutta methods. To better illustrate the error behavior of the target, which is obtained from a multi-stage method, we write out the none autonomous format of the ODE system \eqref{eq:ODE}. In this section, we include the explicit dependence of $\vF(\vx(t),t)$ on $t$ and consider following general format of the ODE. {\emph{For convenience of analysis, none autonomous format is adapted in the section}}.
$$
\frac{d \vx}{d t}=\vF(\vx(t),t), ~~\vx(t_0)=\vx_0.
$$
The integral version of the corresponding ODE system is rewritten as 
\be
\vx_j(t_0+\Delta)-\vx_j(t_0)=\int_{t_0}^{t_0+\Delta} \vF(\vx_j(t),t) ~dt.\label{eq:ODE-integral-sec3}
\ee
The relation between training data pair $(\vy_{j}^1,\vy_{j}^2)$ of \eqref{nn:data-pairs} and the solution of \eqref{eq:ODE-integral-sec3} is
$$
\vy_{j}^1= \vx_j(t_0), ~~~\vy_j^2\approx\vx_j(t_0+\Delta),\quad j=1, \dots, J.
$$

In this paper, we investigate the error performance of the ResNet solver \eqref{Resnet_formula_1} based on the targets $\vy_{j}^2$ obtained from following forward Euler, Runge-Kutta2, and Runge-Kutta4 methods.

\begin{enumerate}
    \item First order forward Euler method 
    \begin{equation}
    \vy_j^2=\vy_j^1+ \Delta \times \vF(\vy_j^1, t_0). \label{scheme:forward_Euler}
\end{equation}

\item Second order Runge-Kutta2 method
\begin{align}
k_1 &=\vF(\vy_j^1,t_0); \quad\quad k_2 =\vF(\vy_j^1+\Delta k_1, t_0+\Delta);\nonumber\\
\vy_j^2 &= \vy_j^1+\Delta\times \left(\frac{1}{2}k_1+\frac{1}{2}k_2\right).
\label{scheme:RK_2}
\end{align}

\item Fourth order Runge-Kutta4 method

\begin{align}
    k_1 &=\vF(\vy_j^1,t_0); \quad \quad
    k_2 =\vF\left(\vy_j^1+\frac{\Delta}{3}k_1,t_0+\frac{\Delta}{3}\right);\nonumber\\
    k_3 &=\vF\left(\vy_j^1-\frac{\Delta}{3} k_1+\Delta k_2, t_0+\frac{2\Delta}{3}\right);\quad 
    k_4=\vF\left(\vy_j^1+\Delta k_1-\Delta k_2+\Delta k_3 ,t_0+\Delta\right)\nonumber\\
    \vy_j^2 &= \vy_j^1+\frac{\Delta}{8}\times \left( k_1+3k_2+3k_3+k_4\right). 
    \label{scheme:RK_4-2}
\end{align}

\end{enumerate}

\begin{lemma}
Given the ResNet training target $\vy_j^2$ of \eqref{nn:data-pairs} obtained from a one-step method with $k$-th order local truncation error, the error between the target and the exact solution $\textbf{x}_j(t_0+\Delta)$ of the ODE system \eqref{eq:ODE-integral} is of $(k+1)$th order
\begin{equation}
  \|\vy_j^{2}-\vx_j(t_0+\Delta)\|_2 \le C\Delta^{k+1},
\end{equation}
where constant $C = C(\vF)$ depends on the Lipschitz constant of $\vF$ and the regularity of the solution.
%$\textbf{x}_j(t_0)$.   
\end{lemma}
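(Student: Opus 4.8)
\emph{Proof proposal.}

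The plan is to exploit that the learning data carry no input error: since $\vy_j^1=\vx_j(t_0)$ holds exactly, the quantity $\vy_j^2-\vx_j(t_0+\Delta)$ is precisely the \emph{local} (one-step) error of the underlying finite difference scheme started from exact data. The whole statement therefore reduces to the classical fact that a one-step method whose local truncation error is of order $k$ commits a one-step error of order $k+1$, together with an identification of the constant in terms of $\vF$. Concretely, writing the scheme in increment form $\vy_j^2=\vy_j^1+\Delta\,\Phi(\vy_j^1,t_0;\Delta)$ and setting $\tau:=\tfrac{1}{\Delta}\bigl(\vx_j(t_0+\Delta)-\vx_j(t_0)-\Delta\,\Phi(\vx_j(t_0),t_0;\Delta)\bigr)$, one has the exact identity $\vy_j^2-\vx_j(t_0+\Delta)=-\Delta\,\tau$, so it suffices to show $\|\tau\|_2\le C\Delta^{k}$.

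First I would Taylor expand the exact solution about $t_0$, using the ODE to convert time derivatives into expressions in $\vF$, namely $\vx'=\vF$, $\vx''=\vF_t+\vF_x\vF$, and so on up to order $k+1$. Separately I would expand the increment $\Delta\,\Phi$ in powers of $\Delta$. The hypothesis that the scheme has $k$-th order local truncation error says exactly that these two expansions coincide through the $\Delta^{k}$ term; subtracting, every term up to and including order $\Delta^{k}$ cancels in $\Delta\,\tau$ and the leading surviving contribution is $O(\Delta^{k+1})$. The forward Euler case \eqref{scheme:forward_Euler} is the clean prototype: there $\tau=\tfrac{\Delta}{2}\,\vx''(t_0)+O(\Delta^2)$, so the one-step error is $\Delta\,\tau=\tfrac{\Delta^2}{2}\,\vx''(t_0)+O(\Delta^3)=O(\Delta^2)$, matching $k=1$.

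Next I would convert the remainder into the quantitative bound. Applying Taylor's theorem with integral (or Lagrange) remainder on $[t_0,t_0+\Delta]$, the uncancelled part is controlled by $\Delta^{k+1}$ times a supremum of $(k+1)$-th order solution derivatives along the trajectory; by repeated differentiation of the ODE these derivatives are in turn bounded by the sizes of $\vF$ and its partial derivatives up to order $k$ on a neighborhood of the trajectory. This is where the regularity of $\vF$ enters, while the Lipschitz constant $L$ of \eqref{eq:Lipschitz} controls the solution growth and the stage-difference estimates. Collecting these bounds yields a constant $C=C(\vF)$ and the asserted estimate $\|\vy_j^2-\vx_j(t_0+\Delta)\|_2\le C\Delta^{k+1}$.

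The main obstacle is the order-matching step for the multi-stage schemes. For Runge-Kutta2 \eqref{scheme:RK_2} and especially the four-stage scheme \eqref{scheme:RK_4-2}, each stage is a nested evaluation of $\vF$ at a perturbed argument, so it must be Taylor expanded by back-substituting the expansions of the earlier stages; one then has to verify that the weighted combination $\tfrac{1}{8}\bigl(k_1+3k_2+3k_3+k_4\bigr)$ reproduces the Taylor coefficients of $\tfrac{1}{\Delta}\int_{t_0}^{t_0+\Delta}\vF\,dt$ through order $\Delta^{3}$, so that after multiplying by $\Delta$ the match reaches $\Delta^{4}$ and only $O(\Delta^{5})$ survives. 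This elementary-differential bookkeeping (equivalently, checking the Butcher order conditions) is the genuinely laborious part, particularly for $k=4$; once the order of each listed scheme ($k=1,2,4$) is confirmed, the reduction and the remainder estimate above complete the proof.
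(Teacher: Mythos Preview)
Your core identity $\vy_j^2-\vx_j(t_0+\Delta)=-\Delta\,\tau$ (with $\tau$ the local truncation error) is exactly the paper's argument: write the scheme in increment form, write the exact solution in the same form plus $\Delta\,\tau_j(\Delta)$, subtract, and use $\vy_j^1=\vx_j(t_0)$ so that the $\Phi$-terms cancel identically. The paper's proof stops there, since $\tau_j(\Delta)=O(\Delta^k)$ is the \emph{hypothesis} of the lemma.

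Where you diverge is in proposing to then \emph{verify} $\|\tau\|_2\le C\Delta^k$ via Taylor expansion and Butcher order conditions for each of \eqref{scheme:forward_Euler}, \eqref{scheme:RK_2}, \eqref{scheme:RK_4-2}. For the lemma as stated this is superfluous: the $k$-th order local truncation error is assumed, not to be established. The paper does carry out a scheme-by-scheme analysis, but only \emph{after} the lemma (Cases~I--III), and with a different technique: rather than matching Taylor coefficients and checking order conditions, it interprets each Runge--Kutta update as a quadrature rule (rectangle, trapezoid, three-eighths Simpson) applied to $\int_{t_0}^{t_0+\Delta}\vF(\vx(t),t)\,dt$, bounds the interpolation error of the associated polynomial, and controls the gap between the RK stage values $k_i$ and the exact $\vF$-values $\widetilde{k_i}$ via the Lipschitz condition. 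Your Taylor/elementary-differential route would also work and is the textbook approach; the paper's quadrature viewpoint is more geometric and avoids the nested back-substitution you flag as laborious, at the cost of being specific to these particular RK tableaux.
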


\begin{proof}

Suppose $\vy_j^2$ is obtained from a one-step method 
\be\label{lemma-one-step-method}
\vy_j^2=\vy_j^1+\Delta\times \Phi\left(t_0,\vy_j^1,\vF(\vy_j^1),\Delta\right),
\ee
with $\vy_j^1=\textbf{x}_j(t_0)$ and step size of $\Delta$. Denote $\tau_j(\Delta)=O(\Delta^k)$ as the local truncation error of the one-step method (\ref{lemma-one-step-method}). We have
\be\label{lemma-exact}
\textbf{x}_j(t_0+\Delta)=\textbf{x}_j(t_0)+\Delta\times \Phi\left(t_0,\textbf{x}_j(t_0),\vF(\textbf{x}_j(t_0)),\Delta\right)+\Delta\times\tau_j(\Delta).
\ee
Subtract \eqref{lemma-one-step-method} from \eqref{lemma-exact} and apply the fact of $\vy_j^1=\textbf{x}_j(t_0)$, we have
$$
\|\textbf{x}_j(t_0+\Delta)-\vy_j^{2}\|_2
\leq \Delta\|\Phi\left(t_0,\vy_j^1,\vF(\vy_j^1),\Delta\right)-\Phi\left(t_0,\textbf{x}_j(t_0),\vF(\textbf{x}_j(t_0)),\Delta\right)\|_2 +\Delta\times\tau_j(\Delta)\leq C \Delta^{k+1},
$$
where $C=C(\vF)$ depends on the regularity of the system.
\end{proof}

\vspace{.05in}
\begin{rem}
The target $\vy_j^{2}$ is obtained with one step and with step size of $\Delta$ ONLY, thus its error to the exact solution is one order higher than the $k$-th order. 
\end{rem}
 
Next, we apply the idea of interpolation polynomial and its approximation to integration to find out the errors between the target and the exact solution. Specifically we obtain the orders of errors of $O(\Delta^2)$, $O(\Delta^3)$ and $O(\Delta^5)$ for the first order forward Euler method \eqref{scheme:forward_Euler}, second order Runge-Kutta2 method \eqref{scheme:RK_2} and fourth order Runge-Kutta4 method \eqref{scheme:RK_4-2} respectively. 

\vspace{.1in}
\noindent
{\bf Case I:} \textbf{$\vy_j^2$ obtained from Forward Euler method \eqref{scheme:forward_Euler}}
\vspace{.1in}

Given $\vy_j^1=\vx_j(t_0)$, subtract the exact solution $\vx_j(t_0+\Delta)$ of \eqref{eq:ODE-integral-sec3} from $\vy_j^2$ of the forward Euler method \eqref{scheme:forward_Euler}, we have 

\begin{align}
    \|\vy_j^2-\vx_j(t_0+\Delta)\|_2
    &= \left\|\vy_j^1+ \Delta \times \vF(\vy_j^1, t_0)-\left(\vx_j(t_0)+\int_{t_0}^{t_0+\Delta} \vF(\vx_j(t),t) ~dt\right)\right\|_2 \nonumber\\
    &=\left\|\int_{t_0}^{t_0+\Delta}  \left(\vF(\vx_j(t_0), t_0)- \vF(\vx_j(t),t)\right) ~dt\right\|_2 \nonumber\\
    &= \left\|\int_{t_0}^{t_0+\Delta}  \frac{d}{dt}\vF(\vx_j(\xi(t)),\xi(t)) (t-t_0) ~dt\right\|_2 \nonumber\\
    &=\frac{\Delta^2}{2}\left\|\frac{d}{dt}\vF(\vx_j(\eta),\eta) \right\|_2\leq C\Delta^2. \label{error_for_Forward_Euler}
\end{align}
Here $\frac{d}{dt}\vF(\vx(t),t)=\frac{\partial \vF}{\partial \vx}\vF + \frac{\partial \vF}{\partial t}$ refers to the complete derivative to the $t$ variable, with $\frac{\partial \vF}{\partial \vx}$ denoting the Jacobian matrix of the vector function $\vF$ on variable $\vx(t)$ and $\frac{d \vx}{dt}=\vF$. Forward Euler method can be considered as a constant quadrature rule approximation to the integral of the ODE system \eqref{eq:ODE-integral-sec3}. Weighted mean value theorem is applied to estimate the error term.

\vspace{.1in}     
\noindent
{\bf Case II:}      
\textbf{$\vy_j^2$ obtained from 2nd order Runge-Kutta method \eqref{scheme:RK_2}}
\vspace{.1in}   

Again we have $\vy_j^1=\vx_j(t_0)$. Subtract $\vx_j(t_0+\Delta)$ of \eqref{eq:ODE-integral-sec3} from $\vy_j^2$ of the second order Runge-Kutta method \eqref{scheme:RK_2}, we have 

\begin{align*}
   \|\vy_j^2-\vx_j(t_0+\Delta)\|_2
    &= \left\|\vy_j^1+\Delta\times\left(\frac{k_1+k_2}{2}\right)-\left(\vx_j(t_0)+\int_{t_0}^{t_0+\Delta} \vF(\vx_j(t),t) ~dt\right)\right\|_2 \nonumber\\
    &=\left\|\Delta\times\left(\frac{k_1+k_2}{2}\right)-\int_{t_0}^{t_0+\Delta} \vF(\vx_j(t),t) ~dt\right\|_2 \nonumber\\
    &\leq \left\|\Delta\times\left(\frac{k_1+\widetilde{k_2}}{2}\right)-\int_{t_0}^{t_0+\Delta} \vF(\vx_j(t),t) ~dt\right\|_2 +\frac{\Delta}{2}\|k_2-\widetilde{k_2}\|_2,
\end{align*}
where $k_2=\vF(\vy_j^1+\Delta k_1, t_0+\Delta)$, $k_1=\vF(\vy_j^1,t_0)$ and $\widetilde{k_2}=\vF(\vx_j(t_0+\Delta), t_0+\Delta)$. With the $O(\Delta)$ local truncation error of the forward Euler method approximating $\vx_j(t_0+\Delta)$ and applying the Lipschitz continuity of $\vF$ of the dynamic system, we have
\begin{align*}
  \|k_2-\widetilde{k_2}\|_2
    &= \left\|\vF(\vy_j^1+\Delta k_1, t_0+\Delta)-\vF(\vx_j(t_0+\Delta), t_0+\Delta)\right\|_2 \nonumber\\
    &\leq L\|\vy_j^1+\Delta k_1-\vx_j(t_0+\Delta)\|_2 
    \leq C\Delta^2. 
\end{align*}
Here $C$ represents a generic constant. The error from the two-points quadrature rule can be estimated as
\begin{align*}
    &  \left\|\Delta\times\left(\frac{k_1+\widetilde{k_2}}{2}\right)-\int_{t_0}^{t_0+\Delta} \vF(\vx_j(t),t) ~dt\right\|_2 \nonumber\\
    =&\left\|\int_{t_0}^{t_0+\Delta}  (\textbf{G}_1(t)- \vF(\vx_j(t),t)) ~dt\right\|_2 \nonumber\\
   = &\frac{1}{2}\left\|\frac{d^2}{dt^2}\vF(\vx_j(\eta),\eta)\right\|_2\left|\int_{t_0}^{t_0+\Delta} (t-t_0)\left(t-(t_0+\Delta)\right) ~dt\right|\nonumber\\ =&\frac{\Delta^3}{12}\left\|\frac{d^2}{dt^2}\vF(\vx_j(\eta),\eta)\right\|_2 \leq C\Delta^3. \label{error_for_RK2}
\end{align*}
Combine the above arguments, we have
\be
\|\vy_j^2-\vx_j(t_0+\Delta)\|_2\leq C\Delta^3.
\label{error_for_RK2}
\ee
Here $\textbf{G}_1(t)$ denotes the linear interpolation polynomial that interpolates $\vF(\vx(t),t)$ at $t_0$ and $t_0+\Delta$. And $\frac{d^2}{dt^2}\vF(\vx(\cdot),\cdot)$ denotes the complete second derivative of $\vF(\vx(t),t)$ to $t$. This 2-stage Runge-Kutta method can be considered as a trapezoidal quadrature rule approximating the integration.

\vspace{.1in}     
\noindent
{\bf Case III:}  
\textbf{$\vy_j^2$ obtained from 4th order Runge-Kutta method \eqref{scheme:RK_4-2}}
\vspace{.1in}  

With $\vy_j^1=\vx_j(t_0)$ and subtract $\vx_j(t_0+\Delta)$ of \eqref{eq:ODE-integral-sec3} from $\vy_j^2$ of the fourth order Runge-Kutta method \eqref{scheme:RK_4-2}, we have 
\begin{align*}
& \, \|\vy_j^2-\vx_j(t_0+\Delta)\|_2\nonumber\\
%=&\left\|\vy_j^1+\frac{\Delta(k_1+3k_2+3k_3+k_4)}{8}-\left(\vx_j(t_0)+\int_{t_0}^{t_0+\Delta} \vF(\vx(t),t) ~dt\right)\right\|_2 \nonumber\\
=&\left\|\frac{\Delta(k_1+3k_2+3k_3+k_4)}{8}-\int_{t_0}^{t_0+\Delta} \vF(\vx_j(t),t) ~dt\right\|_2 \nonumber\\
\leq& \left\|\frac{\Delta(k_1+3\widetilde{k_2}+3\widetilde{k_3}+\widetilde{k_4)}}{8}-\int_{t_0}^{t_0+\Delta} \vF(\vx_j(t),t) ~dt\right\|_2 +\left\|\frac{\Delta(k_1+3k_2+3k_3+k_4)}{8}-\frac{\Delta(k_1+3\widetilde{k_2}+3\widetilde{k_3}+\widetilde{k_4)}}{8}\right\|_2.
\end{align*}
Terms of $k_2, k_3$ and $k_4$ are from the Runge-Kutta4 method \eqref{scheme:RK_4-2}, with $k_1=\vF(\vy_j^1,t_0)=\vF(\vx_j(t_0),t_0)$. We have  $\widetilde{k_2}=\vF(\vx_j(t_0+\frac{\Delta}{3}),t_0+\frac{\Delta}{3})$, $\widetilde{k_3}=\vF(\vx_j(t_0+\frac{2\Delta}{3}),t_0+\frac{2\Delta}{3})$ and 
$\widetilde{k_4}=\vF(\vx_j(t_0+\Delta),t_0+\Delta)$ introduced that $k_2, k_3$ and $k_4$ approximate. Rewrite the Runge-Kutta4 method of \eqref{scheme:RK_4-2} as a one-step method, $\vy_j^2=\vy_j^1+\Delta \Phi\left(t_0,\vy_j^1,\vF(\vy_j^1),\Delta\right)$, we have
\begin{align*}
&\left\|\frac{\Delta(k_1+3k_2+3k_3+k_4)}{8}-\frac{\Delta(k_1+3\widetilde{k_2}+3\widetilde{k_3}+\widetilde{k_4)}}{8}\right\|_2 \nonumber\\
= & \Delta\left\|\Phi\left(t_0,\vy_j^1,\vF(\vy_j^1),\Delta\right)-\Phi\left(t_0,\textbf{x}_j(t_0),\vF(\textbf{x}_j(t_0)),\Delta\right)\right\|_2
\leq  C\Delta^5. 
\end{align*}
Here $C$ represents a generic constant. The error from the four-points quadrature rule can be estimated as
\begin{align*}
& \left\|\Delta\times\left(\frac{k_1+3\widetilde{k_2}+3\widetilde{k_3}+\widetilde{k_4}}{8}\right)-\int_{t_0}^{t_0+\Delta} \vF(\vx_j(t),t) ~dt\right\|_2 \nonumber\\
= &\left\|\int_{t_0}^{t_0+\Delta}  (\textbf{G}_3(t)- \vF(\vx_j(t),t)) ~dt\right\|_2 \nonumber\\
\leq &C\left\|\frac{d^4 \vF}{dt^4}\right\|_2 \left|\int_{t_0}^{t_0+\Delta} (t-t_0)(t-(t_0+\frac{\Delta}{3}))(t-(t_0+\frac{2\Delta}{3}))\left(t-(t_0+\Delta)\right) ~dt\right| \leq  C \Delta^5. 
\end{align*}
Again $C$ represents a generic constant. Summarize the above arguments, we have
\be
\|\vy_j^2-\vx_j(t_0+\Delta)\|_2\leq C\Delta^5.
\label{error_for_RK4}
\ee
Here $\textbf{G}_3(t)$ denotes the cubic interpolation polynomial that interpolates $\vF(\vx(t),t)$ at $t_0$, $t_0+\Delta/3$, $t_0+2\Delta/3$ and $t_0+\Delta$. And $\frac{d^4\vF}{dt^4}$ denotes the complete fourth derivative of $\vF(\vx(t),t)$ to $t$ variable at somewhere. This version of 4-stage Runge-Kutta method can be considered as the three-eighth Simpson quadrature rule approximating the integration.

\begin{theorem}\label{thm_1}
Have $\textbf{x}_j(t_0+\Delta)$ denote the solution of the ODE system \eqref{eq:ODE-integral} with initial $\textbf{x}_j(t_0)$. Suppose the parameter set $\Theta$ of the ResNet \eqref{Resnet_formula_1} is well trained such that the error between the output $\vp_j^{out}$ and its target $\vy_j^2$ is on machine round-off error level. Given the % ResNet input taken as $\vp_j^{in}=\textbf{x}_j(t_0)$ and
 target $\vy_j^2$ obtained from a $k$-th order one-step method, we have 
\begin{equation}\label{ResNet-error}
  \|\vp_j^{out}-\vx_j(t_0+\Delta)\|_2 \le C\Delta^{k+1},
\end{equation}
where constant $C = C(\vF)$ depends on the regularity of the ODE system.
\end{theorem}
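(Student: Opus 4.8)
The plan is to reduce the statement to the already-established Lemma through a single application of the triangle inequality, separating the total error of the ResNet solver into an optimization (training) error and a discretization error. First I would insert and subtract the target $\vy_j^2$ and use subadditivity of the norm to write
\[
\|\vp_j^{out}-\vx_j(t_0+\Delta)\|_2 \le \|\vp_j^{out}-\vy_j^2\|_2 + \|\vy_j^2-\vx_j(t_0+\Delta)\|_2.
\]
This splits the quantity of interest into a piece measuring how well the trained network reproduces its target and a piece measuring how well the target approximates the exact flow.

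The second term on the right is exactly the quantity controlled by the Lemma: because $\vy_j^2$ is generated by a one-step method with $k$-th order local truncation error, the Lemma supplies $\|\vy_j^2-\vx_j(t_0+\Delta)\|_2 \le C\Delta^{k+1}$ with $C=C(\vF)$ depending on the Lipschitz constant of $\vF$ and the regularity of the solution. This is the dominant contribution and is what fixes the order of accuracy asserted in \eqref{ResNet-error}.

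For the first term I would invoke the standing hypothesis that the network is \emph{well trained}, so that $\|\vp_j^{out}-\vy_j^2\|_2$ sits at machine round-off level. Since this training tolerance is, by assumption, orders of magnitude below the discretization error $C\Delta^{k+1}$ for the step sizes $\Delta$ of interest, it contributes negligibly and can be absorbed into the generic constant. Adding the two estimates then yields \eqref{ResNet-error} directly.

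The argument is essentially immediate once the Lemma is in hand, so I do not anticipate a genuine obstacle; the only point deserving care is making the meaning of \emph{well trained} precise, namely that the training tolerance is assumed small relative to $\Delta^{k+1}$ so that the optimization error never competes with the discretization error. Without that understanding the two error sources could in principle be of comparable magnitude and the stated order could degrade; with it, the conclusion follows at once from the triangle inequality and the Lemma.
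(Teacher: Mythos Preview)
Your proposal is correct and matches the paper's proof essentially line for line: the paper inserts and subtracts $\vy_j^2$, applies the triangle inequality to split into a training error plus a target error, invokes the Lemma for the $C\Delta^{k+1}$ bound on the latter, and absorbs the machine-level training error into the constant. Your added remark on what ``well trained'' must mean (training tolerance small relative to $\Delta^{k+1}$) is a reasonable clarification the paper leaves implicit.
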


\begin{proof}
Apply triangle inequality and we have
\begin{align*}
    \|\vp_j^{out}-\vx_j(t_0+\Delta)\|_2
    &=\|\vp_j^{out}-\vy_j^2+\vy_j^2-\vx_j(t_0+\Delta)\|_2\nonumber\\
    &\le \|\vp_j^{out}-\vy_j^2\|_2+\|\vy_j^2-\vx_j(t_0+\Delta)\|_2 \nonumber\\
    &=\mbox{Training error}+\mbox{Target error}. \nonumber\\
    &\leq C \Delta^{k+1}.
 %   \label{error_formula_1}
\end{align*}
\end{proof}

The ResNet \eqref{Resnet_formula_1} can be trained very well to obtain an {\emph{optimal}} parameter set of weight matrices and biases. Numerical experiments show the training errors of $\|\vp_j^{out}-\vy_j^2\|_2$ are small or trained to the tolerance error level and can be ignored when comparing to the target error of $\|\vy_j^2-\vx_j(t_0+\Delta)\|_2 $. Tests show ResNet neural network does {\emph{a very good job}} approximating the target $\vy_j^2$ with stochastic gradient descent method and can be used as an accurate solver for the ODE system.

\subsection{Objective and implementation setup}
\label{S:3.2}
\vspace{.1in}

Now we describe the objective of studies for the rest of the article mainly through numerical tests. For each ODE system or in each numerical example, we conduct following three studies:
\begin{enumerate}
    \item Neural network architecture study in terms of numbers of layers and neurons;
    \item Training targets study in terms of error behavior;
    \item Solution trajectory approximation.
\end{enumerate}

In the first architecture study, we vary the number of layers and neurons per layer and observe how the rectangular arrangements of neurons affects the performance of the network. The specific arrangement of neurons and layers is called the \textit{architecture} of the network. For each neural network with an architecture setting, we train the network with a highly accurate training target $\vz_i^2$ generated from a refined mesh Runge-Kutta4 method (mesh size $h=\Delta/1000$), and will be trained long enough for the error curve to be almost flat. Once the training is finished for this network, we then use a test set of $J$ pairs of data $(\vz_i^1, \vz_i^2)^J_{i=1}$ with $\vp_i^{in}=\vz^1_i$ as the ResNet input and compare the ResNet output $\vp_i^{out}$ to the reference solution $\vz_i^2$ to calculate the error. The following two error norms are computed to measure the worst case and the average case of errors among the total $J$ pairs of data.
\begin{equation}\label{error-Max-Linfy}
Max (L_{\infty})=\max_{1\leq i\leq J} \|\vp_i^{out}-\vz_i^2\|_{\infty}, 
\end{equation}
\begin{equation}\label{error-Mean-L2}
Mean(L_2)= \frac{1}{J} \sum_{i=1}^{J} \|\vp_i^{out} -\vz_i^2\|_2.
\end{equation}

To avoid the influence from the initialization of weights and biases, which are generated from a normal distribution around zero, 
we apply a total ten runs of random initialization of the parameter set and average the output errors of \eqref{error-Max-Linfy} and \eqref{error-Mean-L2} as the performance measurement of that specific ResNet. Notice that same learning data set and same number of iterations (i.e. $K=500$ of \eqref{theta-iteration}) as the stopping condition are applied for each neural network architecture study. Furthermore, we do not differentiate the learning data set and the test set, since no difference is observed. We simply apply the learning data set to calculate the errors of \eqref{error-Max-Linfy} and \eqref{error-Mean-L2}.  

%The learning method used is a stochastic gradient descent method, with a mini-batch size of 1. One iterations represents one complete run through the training set. While more complicated learning methods exist, we believe our results will generalize to different learning algorithms.

The goal of the second study is to verify the result of Theorem \ref{thm_1} that the ResNet solver error  $\|\vp_j^{out}-\vx_j(t_0+\Delta)\|_2$ is dominated by the target error  $\|\vy_j^2-\vx_j(t_0+\Delta)\|_2$. Following the architecture study, we pick {\emph{one}} efficient and accurate ResNet network architecture on which we run the subsequent tests. A deeper neural network with many hidden layers has many more parameters than a simpler network, so is computationally more expensive to train. The computational cost and the accuracy measured by \eqref{error-Max-Linfy} and \eqref{error-Mean-L2} together are considered as metrics by which we select the {\emph{one}} ResNet architecture that will be used for the target study and solution trajectory simulation. 

In the second \textit{target} study, we adopt three different target data $\vy_j^2$ as learning data sets to obtain three independent ResNet networks. We then evaluate the performance of the three networks separately. This study involves the target $\vy_j^2$ computed from the forward Euler method \eqref{scheme:forward_Euler}, Runge-Kutta2 method \eqref{scheme:RK_2} and Runge-Kutta4 method \eqref{scheme:RK_4-2} with mesh size $\Delta$. We measure the following mean $L_2$ error between the target and the reference solution $\vz_i^2$
\begin{equation}\label{error-target}
\mbox{Target Mean} (L_2)\, \mbox{error}= \frac{1}{J} \sum_{i=1}^{J} \|\vy_i^2 -\vz_i^2\|_2.
\end{equation}
For each target generated from one of the three finite difference methods, we also calculate the mean $L_2$ error of \eqref{error-Mean-L2} between the ResNet output and the reference solution after every iteration. Numerical tests show that the ResNet output error curve converges to the target error as the number of iterations increases. In a word, the ResNet solver error $\|\vp_j^{out}-\vx_j(t_0+\Delta)\|_2$ is dominated by its target error $\|\vy_j^2-\vx_j(t_0+\Delta)\|_2$. Neural network can successfully learn and replicate the three finite difference methods.  

Finally in the solution trajectory simulation, we call the ResNet solvers repeatedly to generate a discrete set of points approximating the continuous reference solution curve as a standard one step finite difference method.
All three ResNet solvers trained from forward Euler method, Runge-Kutta2 method and Runge-Kutta4 method are applied and compared to the reference solution to further check the accuracy and capability of the neural network solvers. 

Throughout all numerical examples, constant time lag of $\Delta=0.1$ or smaller, due to stability restriction of the finite difference methods, is applied. For the learning data set, a total number of $J=500$ data pairs are applied to the linear ODE systems (Example 1) and a total number of $J=2000$ data pairs are applied to nonlinear ODE systems (all other examples).

%%%%%%%%%%%%%%%%%%%%%%%%%%%%%%%%%%%%%%%%%%%%%%%%%%%%%%
%%%%%%%%%%%%%%%%%%%%%%%%%%%%%%%%%%%%%%%%%%%%%%%%%%%%%%%%%
\section{Numerical experiments}
\label{S:4}

In this section, we consider a sequence of ordinary differential equation systems and carry out the objective of architecture study, target study and solution curve simulation listed in section \ref{S:3.2}. 
%On each system, we run a series of empirical tests to verify the predicted behavior of ResNet (\ref{Resnet_formula_1}) as an ODE solver. 
%We also include a dashed line, which represents the mean $L_2$ error between the fine-mesh Runge-Kutta-4 outputs and the outputs from each of the less-accurate methods. This correlates to $I_2$ from (\ref{error_for_Forward_Euler}),(\ref{error_for_RK2}), and (\ref{error_for_RK4}), and represents the theoretical limit for how well the network could possibly perform.

%could specifically reference the equations from section 3, which explain the specific methods we are using.

%%%%%%%%%%%%%%%%%%%%%%%%%%%%%%%%%%%%%%%%%%%%%%%%%%
\begin{example}\label{ex1} {\bf \emph {Linear ODE systems}}
\end{example}

In this example we consider six linear ODE systems of the form
\be\label{eq:ex1}
\dot{\vx}=\textbf{A}\vx+\textbf{b}, 
\ee
where $\textbf{A}\in R^{2\times 2}$ and $\textbf{b}\in R^2$, see \cite{Wu-Xiu-2018}. The system coefficients matrices $\textbf{A}$, the non-homogeneous vectors $\textbf{b}$ and domains of interest from which learning data are drawn are outlined in Table \ref{tab:linear ODE}. 
\begin{table}[htbp]
    \centering
    \begin{tabular}{l l l}
    \toprule 
         & Linear ODE system & domain of interest  \\
         \midrule
         Saddle point& $A=\left[\begin{matrix} 
                                1 & 1\\
                                1 & -1
                                \end{matrix}\right], \,\,\,\,b=\left[\begin{matrix} 
                                -2 \\
                                0
                                \end{matrix}\right]$ & $D=[0, 2]\times[0, 2]$\\
         \midrule  
         Nodal sink & $A=\left[\begin{matrix} 
                                -2 & 1\\
                                1 & -2
                                \end{matrix}\right], b=\left[\begin{matrix} 
                                -2 \\
                                1
                                \end{matrix}\right]$ & $D=[-2, 0]\times[-1, 1]$ \\
         \midrule
         Improper node & $A=\left[\begin{matrix} 
                                1 & -4\\
                                4 & -7
                                \end{matrix}\right],\quad b=\left[\begin{matrix} 
                                0 \\
                                0
                                \end{matrix}\right]$ & $D=[-1, 1]\times[-1, 1]$ \\
         \midrule 
         Star point & $A=\left[\begin{matrix} 
                                -1 & 0\\
                                0 & -1
                                \end{matrix}\right],\, b=\left[\begin{matrix} 
                                0 \\
                                0
                                \end{matrix}\right]$ & $D=[-1, 1]\times[-1, 1]$ \\
         \midrule
         Center point &$A=\left[\begin{matrix} 
                                 1 & 2\\
                                -5 & -1
                                \end{matrix}\right],\, b=\left[\begin{matrix} 
                                0 \\
                                0
                                \end{matrix}\right]$ & $D=[-1, 1]\times[-1, 1]$ \\
         \midrule
         Spiral point &$A=\left[\begin{matrix} 
                                -1 & -1\\
                                2 & -1
                                \end{matrix}\right], b=\left[\begin{matrix} 
                                -1 \\
                                5
                                \end{matrix}\right]$ & $D=\{\vx|(x_1+2)^2+(x_2-1)^2\le 1\}$ \\
         \midrule
         \bottomrule
    \end{tabular}
    \caption{Example \ref{ex1} linear ODE systems coefficients matrices.}
    \label{tab:linear ODE}
\end{table}

We apply a training set of $J=500$ data pairs with a time lag of $\Delta = 0.1$ in this example, and each network is trained for a total of $K=500$ iterations. To save space, we only present the results of the Saddle point and Nodal sink systems, with the other four linear ODE systems behaving similarly. In Figure \ref{fig:ex1-architecture} we list the architecture studies of the two systems, with architectures running between 1 to 4 hidden layers and 2 to 10 neurons per layer. We find that 1 hidden layer has a high degree of accuracy and efficiency for all the linear ODE systems. For the saddle point system, one hidden layer of two neurons is the optimal architecture, while for the nodal sink system, one hidden layer of six neurons is the optimal. We now use these two ResNet networks for the target study and the solution curve approximations.

\begin{figure}[htbp]
\centering
\subfigure[Saddle point $L_{\infty}$ error]{
\includegraphics[width=0.31\linewidth]{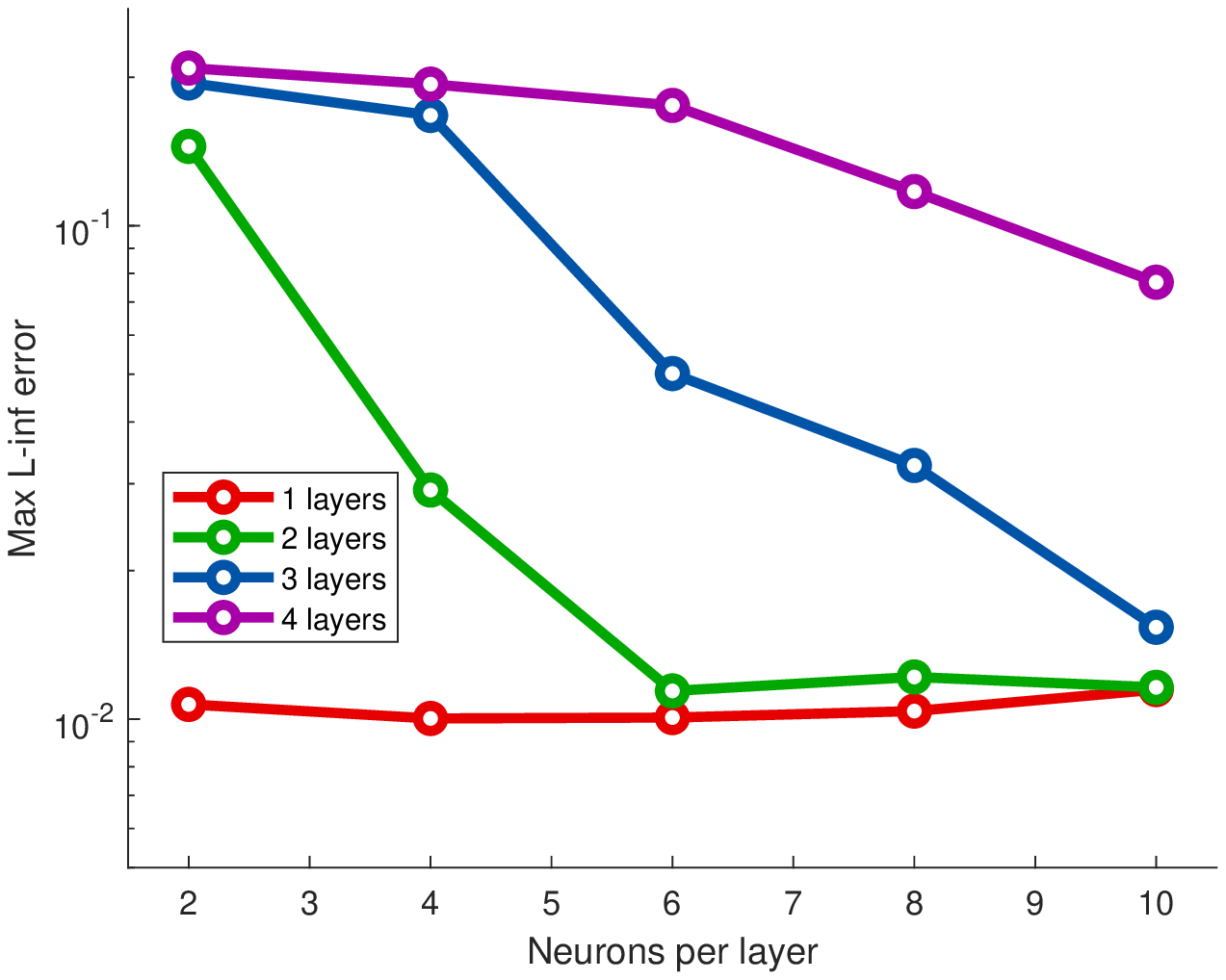}
}
\subfigure[Saddle point $L_2$ error]{
\includegraphics[width=0.31\linewidth]{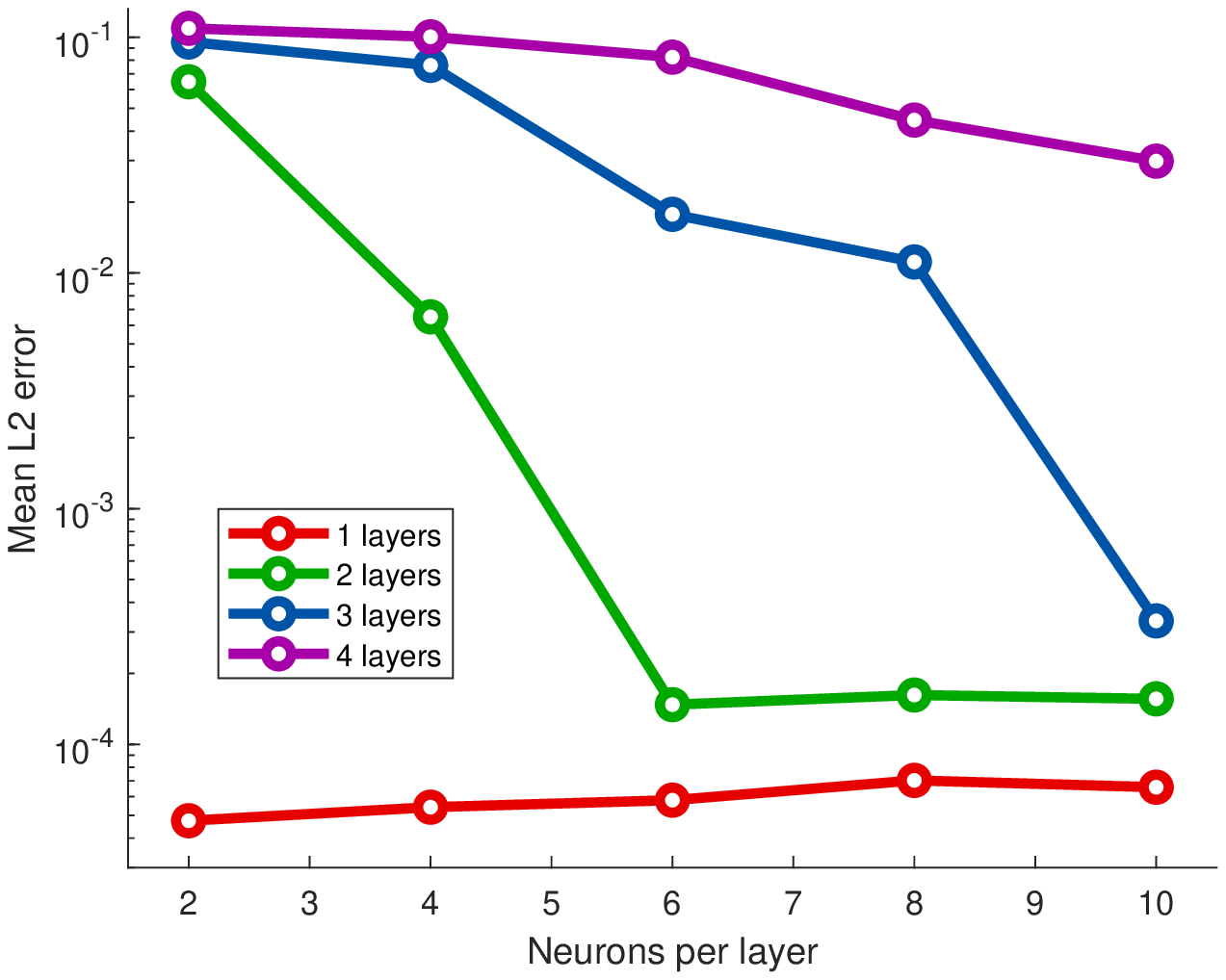}
}\quad\quad
\subfigure[Nodal sink $L_{\infty}$ error]{
\includegraphics[width=0.31\linewidth]{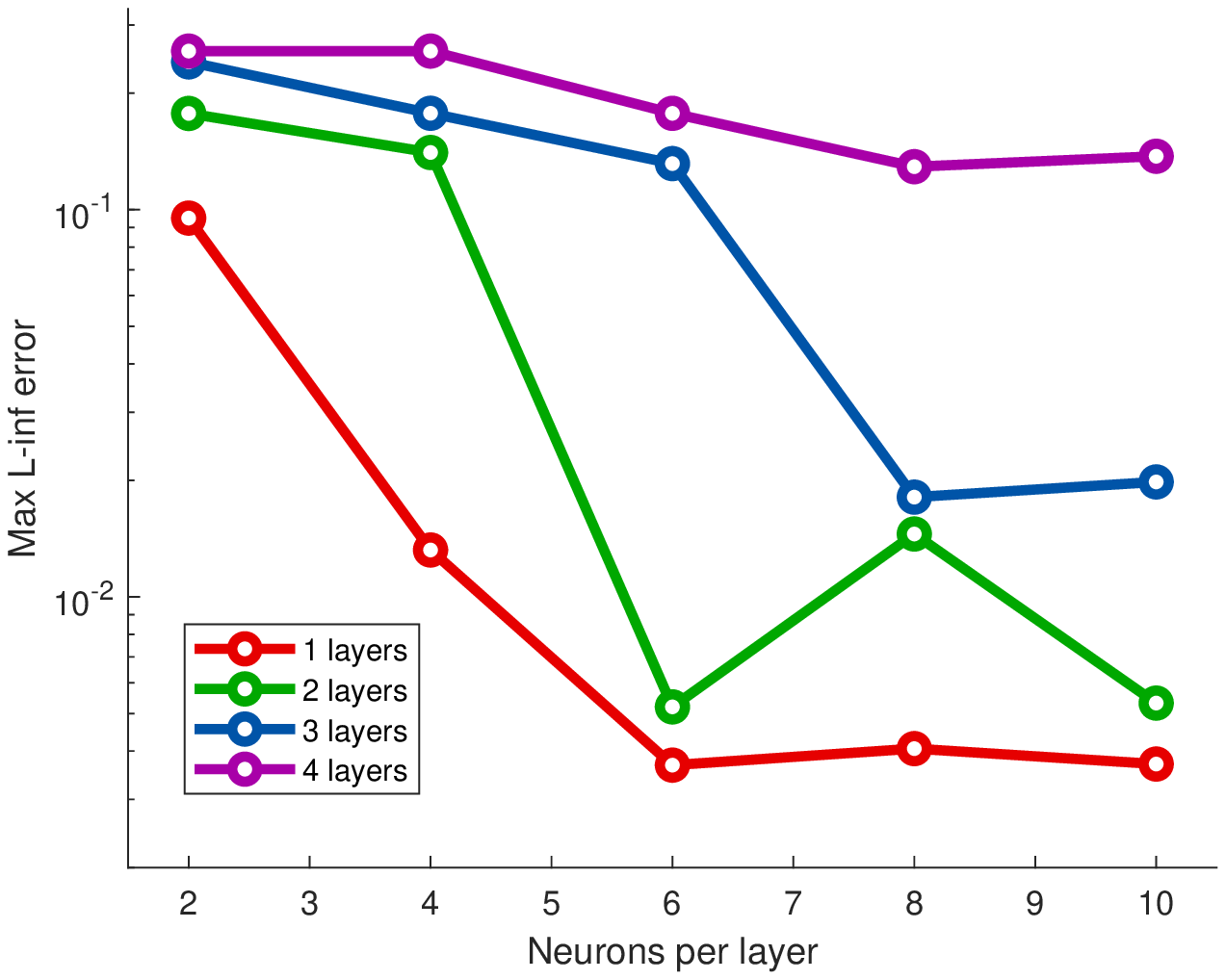}
}
\subfigure[Nodal sink $L_2$ error]{
\includegraphics[width=0.31\linewidth]{fig/ex1/linearnodalsinkL2.eps}
}
\caption{Example \ref{eq:ex1} of linear ODE systems architecture studies  }
\label{fig:ex1-architecture}
\end{figure}
For the target study we have $\vy_j^2$ generated from forward Euler, Runge-Kutta2 and Runge-Kutta4 methods with mesh size $\Delta=0.1$, and compare the ResNet output error of \eqref{error-Mean-L2} to the target error of \eqref{error-target}. In Figures \ref{fig:ex1-accuracy} part (a) and part (b) we present the target study for the Saddle point and Nodal sink systems. Dashed lines are for target errors and solid curves are for ResNet output errors corresponding to iterations. We observe the solid curves converging into the dashed lines with enough iterations, which implies that the ResNet output errors become dominated by the target errors. Furthermore we observe the errors reach  $10^{-2}$ for forward Euler target, to $10^{-3}$ for Runge-Kutta2 target and to $10^{-5}$ for Runge-Kutta4 target, which perfectly corresponds to the results of  $O(\Delta^2)$, $O(\Delta^3)$ and $O(\Delta^5)$ with $\Delta=0.1$, as derived in section \ref{S:3.1}.

\begin{figure}[htbp]
\centering
\subfigure[Target study for saddle point]{
\includegraphics[width=0.31\linewidth]{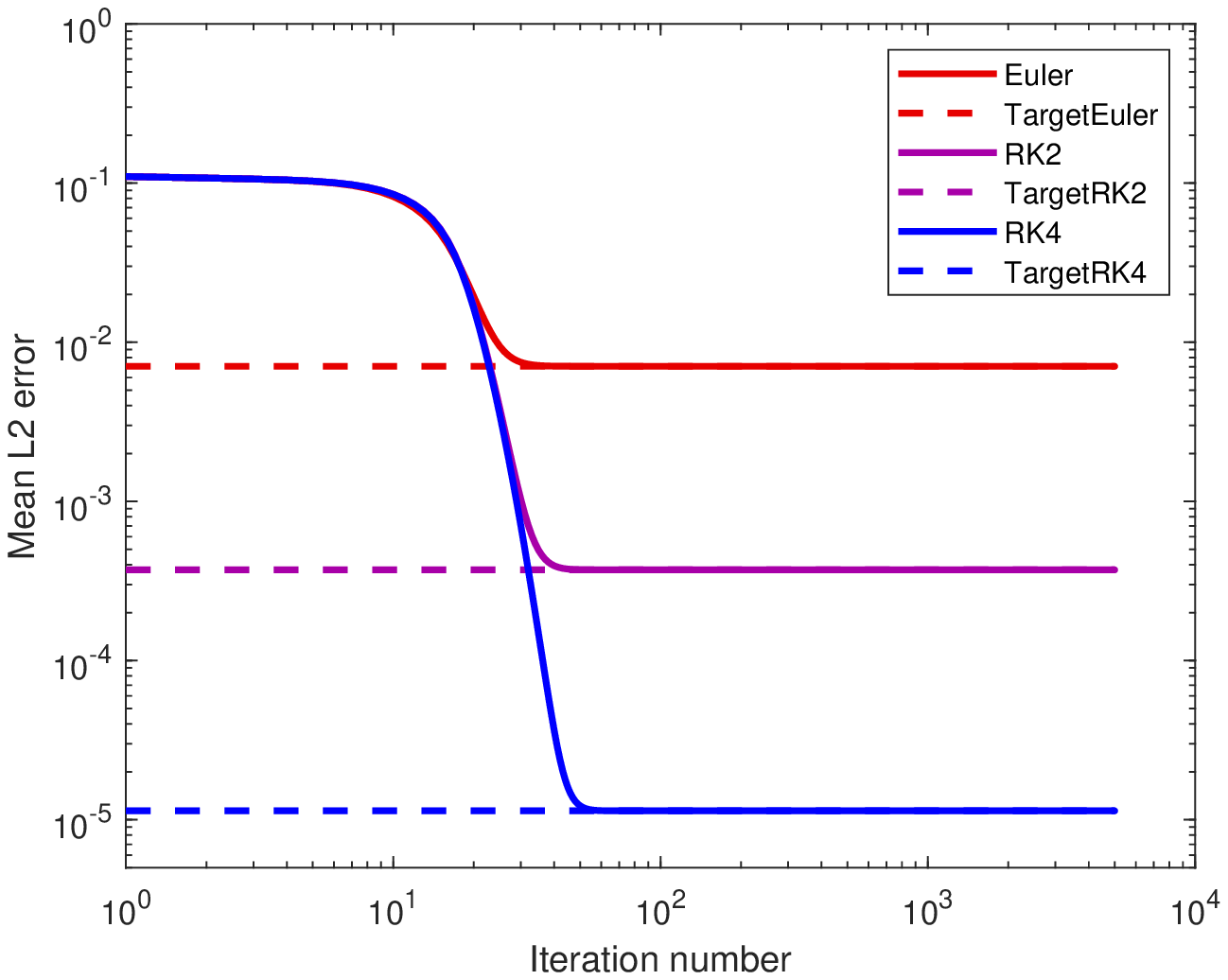}
%\caption{pic1}
}
\subfigure[Target study for nodal sink]{
\includegraphics[width=0.31\linewidth]{fig/ex1/linearnodalsinkunreliabledata.eps}
%\caption{}
}
\quad\quad
\subfigure[Trajectory for saddle point]{
\includegraphics[width=0.31\linewidth]{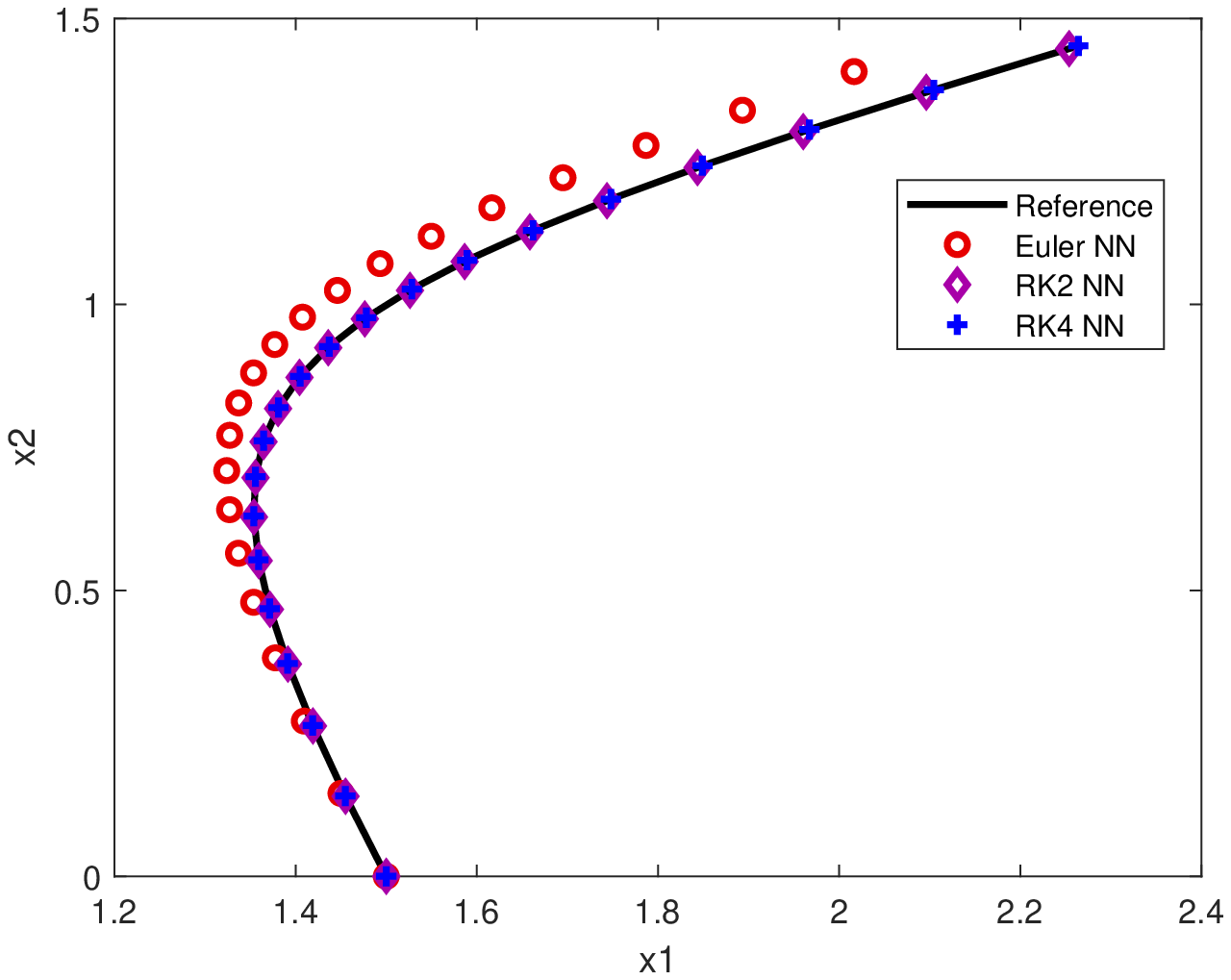}
%\caption{}
}
\subfigure[Trajectory for nodal sink]{
\includegraphics[width=0.31\linewidth]{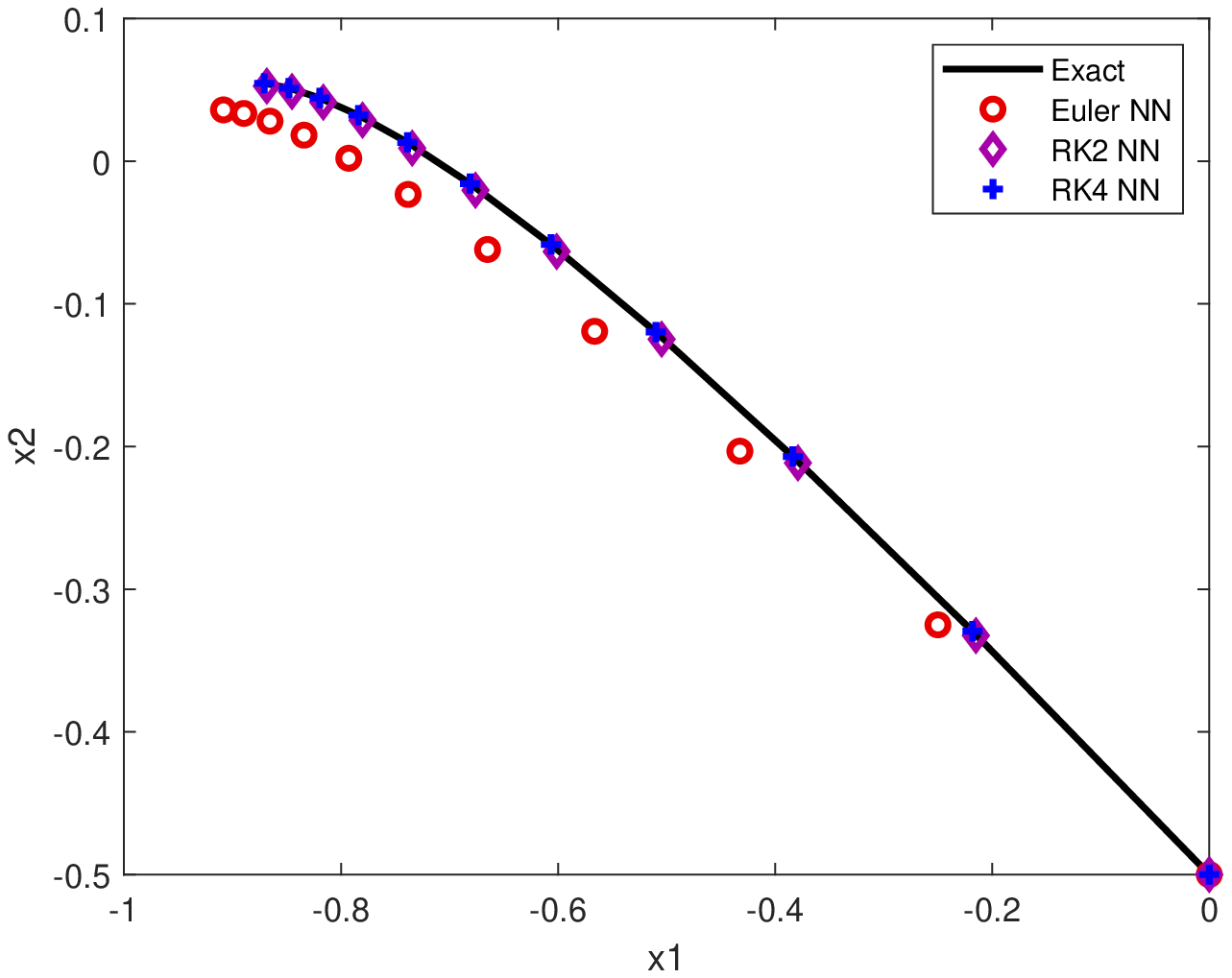}
%\caption{}
}
\caption{Example \ref{eq:ex1} of linear ODE systems target studies and trajectory approximations.
} 
\label{fig:ex1-accuracy}
\end{figure}

Finally, we apply the well trained networks to simulate solution trajectories in the phase plane, and compare against the highly accurate reference solution. In Figure \ref{fig:ex1-accuracy} part (c) and (d), we present the saddle point system trajectory simulation with initial position $\vx=(1.5,0)$ and up to final time $T=2.0$, while the nodal sink system trajectory has initial position $\vx=(0,-0,5)$ and final time $T=2.0$. The ResNet solver trained on forward Euler data quickly generates large errors, while the other two ResNet solvers agree very well with the reference solution trajectory. 

%In our experience, the values of scale $10^{-1}$ are not due to networks consistently ending at that error, but are instead due to a portion of the networks not training at all. This failure to train is characteristic of narrow ReLU networks, and further analysis can be found in \cite{Lu-2018}-?

%%%%%%%%%%%%%%%%%%%%%%%%%%%%%%%%%%%%%%%%%%%%%%%%%%

\begin{example}\label{ex2} {\bf \emph {Damped oscillating pendulum}}
\end{example}

In this example, we consider the motion of a damped oscillating pendulum modeled by the following ODE system.
\begin{fleqn}
\be\label{ex2-system}
 \left\{\begin{aligned}
     \dot{x_1} &= x_2.\\
\dot{x_2} &= -\gamma x_2-\omega^2 \sin(x_1).
 \end{aligned}
\right.
\ee
\end{fleqn}
Coefficient $\omega^2$ is related to the local gravitational acceleration and the length of the pendulum, and determines the frequency $\omega$ of the oscillation. Coefficient $\gamma$ is a linear drag coefficient, which serves to gradually decrease the magnitude of oscillations. Here we have $\omega^2=8.91$ and $\gamma=0.2$. The domain of interest is taken as $D= [-\pi,\pi]\times [-2\pi,2\pi]$.

\begin{figure}[htbp]
\centering
\subfigure[Training errors]{
\includegraphics[width=0.30\linewidth]{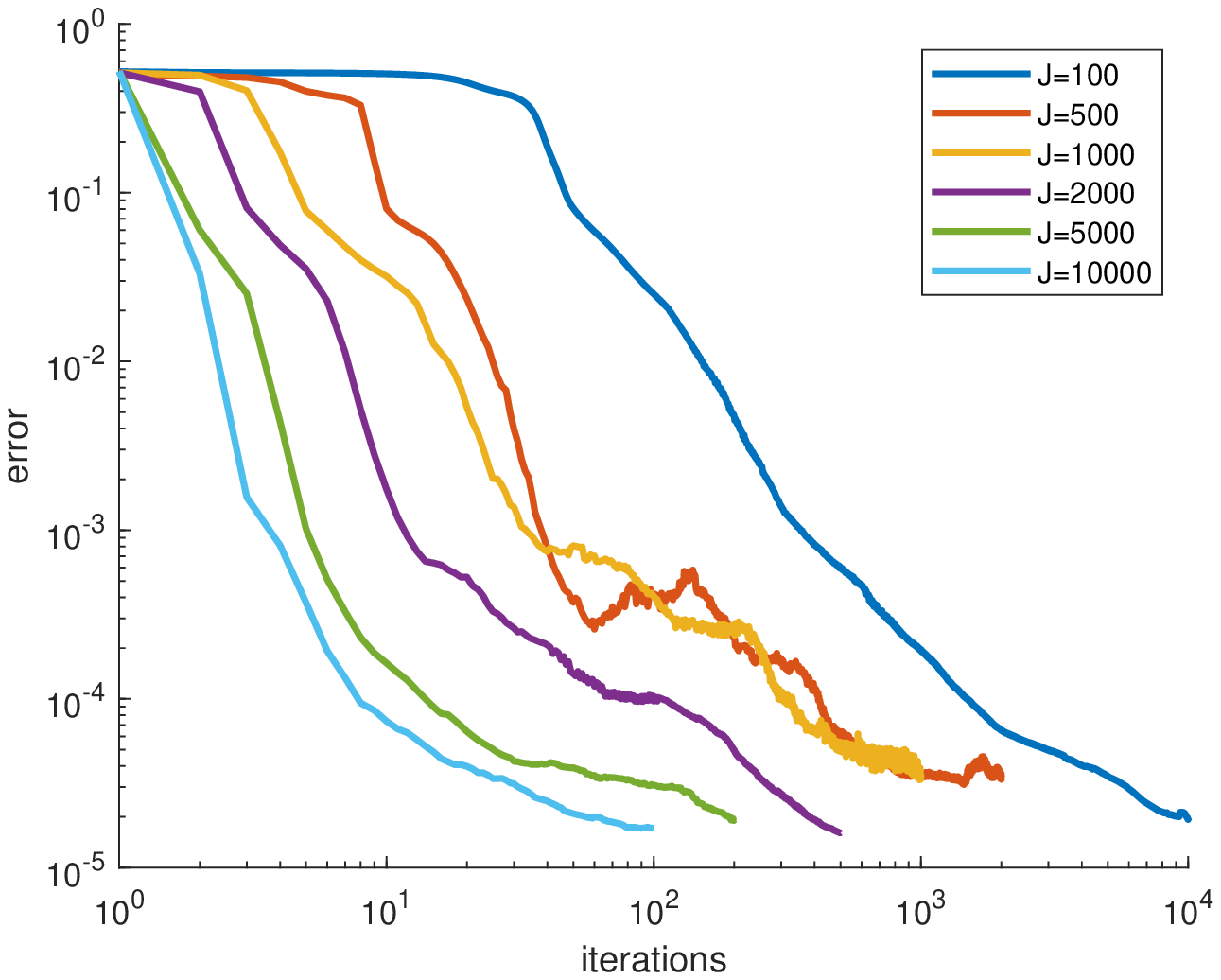}
%\caption{pic1}
}
\quad
\subfigure[Test set errors]{
\includegraphics[width=0.30\linewidth]{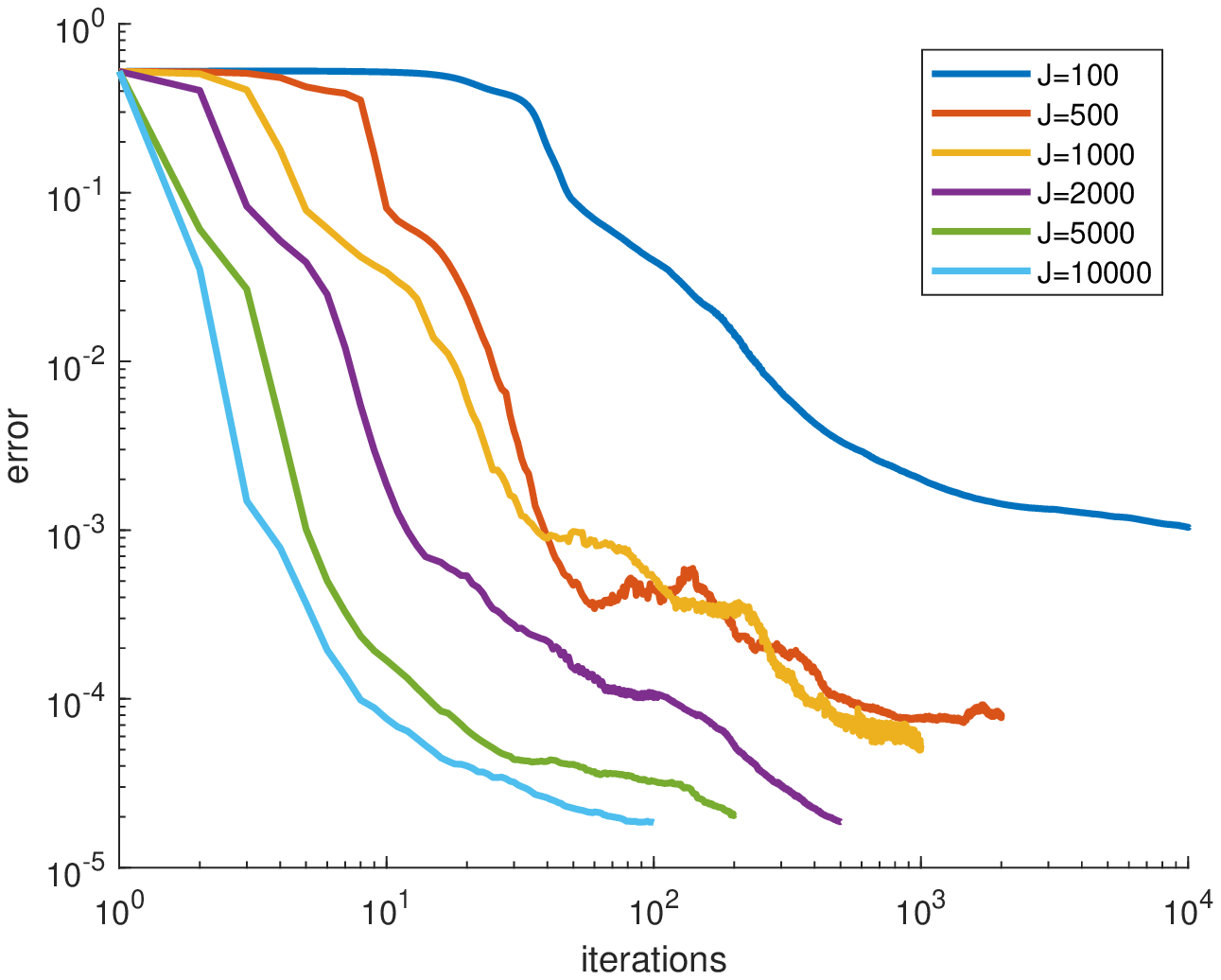}
%\caption{}
}
\caption{Learning data set density test with Example \ref{ex2}}
\label{fig:ex2-density}
\end{figure}

Before our three studies, we use this nonlinear ODE system to demonstrate the learning data set density problem. For deeper neural networks with more unknowns, the $J = 500$ training data pairs used in the previous example may not be enough to prevent sizable generalization errors. The density test is based on a neural network structure of six hidden layers and forty neurons per layer. We vary the number of training pairs from $J=100$ to $J=10000$ and observe how the mean square errors evolve over iterations for each $J$ value. A total fixed number of $10^6$ updates is applied for each $J$, leading to a total number of iterations that depends on the $J$ value. Error curves with different $J$ values are displayed in Figure \ref{fig:ex2-density}. This test gives us a guideline for what density obtains the smallest error with approximately the same computational time. {\emph{We pick the candidate of $J=2000$ data pairs and apply this learning data pair set value in the rest examples of the section}}.

\begin{figure}[htbp]
\centering
\subfigure[Architecture study with $L_{\infty}$ error]{
\includegraphics[width=0.33\linewidth]{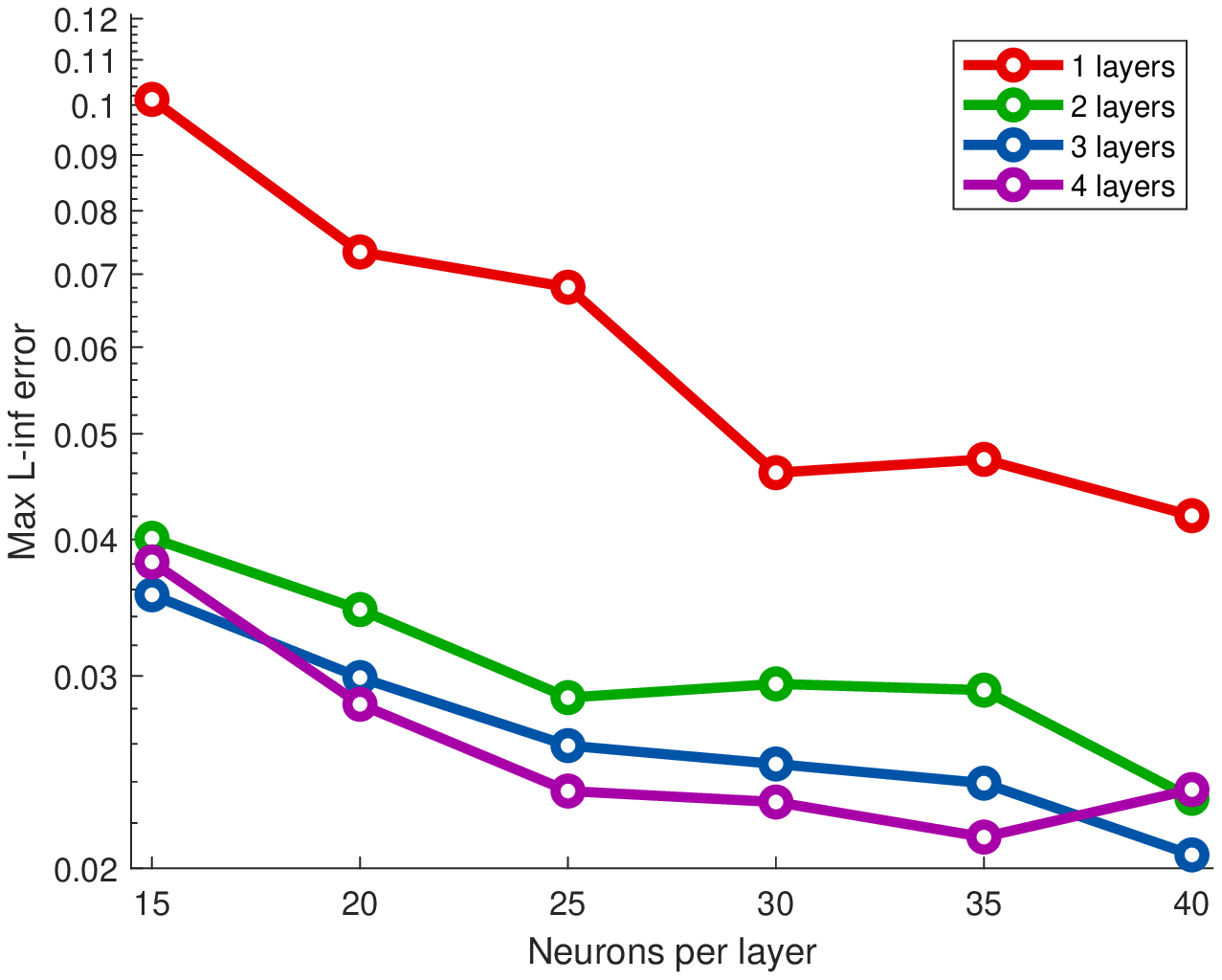}
%\caption{pic1}
}
\subfigure[Architecture study with $L_2$ error]{
\includegraphics[width=0.33\linewidth]{fig/ex3/damppendL2.eps}
%\caption{}
}\quad\quad
\subfigure[Target study on the system]{
\includegraphics[width=0.33\linewidth]{fig/ex3/damppendunreliabledata.eps}
%\caption{}
}
\quad
\subfigure[Phase plane trajectory simulation]{
\includegraphics[width=0.33\linewidth]{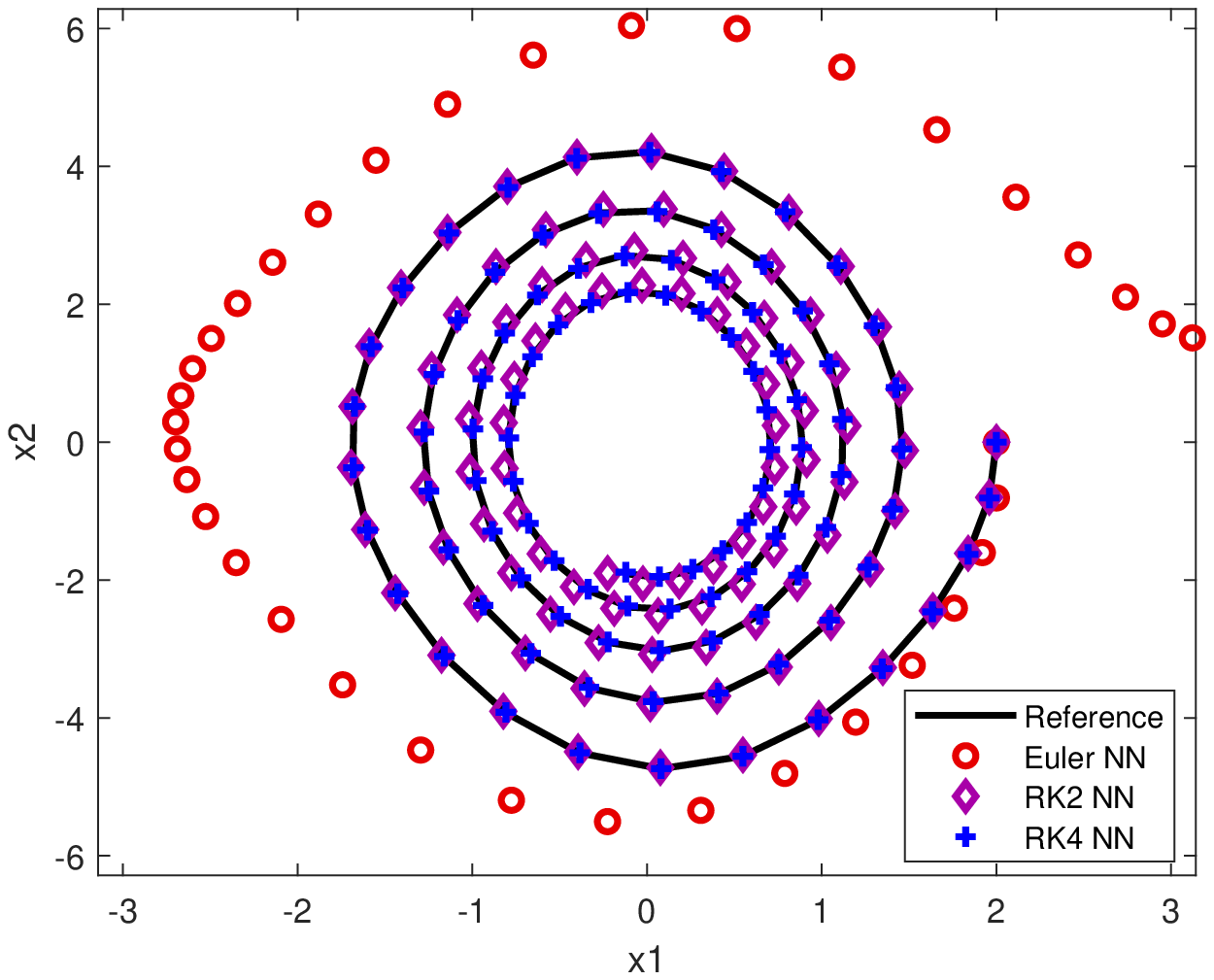}
%\caption{}
}
\caption{Example \ref{ex2} of the damped pendulum system.
} 
\label{fig:ex2-result}
\end{figure}

For the architecture study, each network is trained with $K=500$ iterations and the measured maximum $L_{\infty}$ error of \eqref{error-Max-Linfy} and mean $L_2$ error of \eqref{error-Mean-L2} for each architecture setting are outputted and illustrated in Figure \ref{fig:ex2-result} part (a) and part (b). The optimal neural network we choose has 2 hidden layers and 40 neurons per layer. 

For the target study, ResNet output errors of \eqref{error-Mean-L2} and target errors of \eqref{error-target} with targets generated from forward Euler, Runge-Kutta2 and Runge-Kutta4 with mesh size $\Delta=0.1$ are displayed in Figure \ref{fig:ex2-result} part (c). For this nonlinear problem, the target errors are consistent to the analyzed error orders of $O(\Delta^2)$, $O(\Delta^3)$ and $O(\Delta^5)$ for the three finite difference schemes. Solid curves of ResNet errors merge into the dashed lines of target errors over iterations. ResNet networks are very well trained and the network errors are dominated by the errors from targets.

In part (d) of Figure \ref{fig:ex2-result} we present the phase plane trajectory simulation by the well trained three neural networks. The solution curve starts at $\vx=(2,0)$ and runs up to final time $T=10.0$. The ResNet solver trained from forward Euler with relatively large mesh size $\Delta=0.1$ quickly diverges after one cycle of oscillation. Yet the ResNet solvers trained from same mesh size Runge-Kutta2 and Runge-Kutta4 methods behave well and almost exactly match the reference solution after several rounds of oscillations.

%%%%%%%%%%%%%%%%%%%%%%%%%%%%%%%%%%%%%%%%%%%%%%%%%%
%%%%%%%%%%%%%%%%%%%%%%%%%%%%%%%%%%%%%%%%%%%%%%%%%

\begin{example}\label{ex3} {\bf \emph {Nonlinear ODE system with four critical points}}
\end{example}

In this example, we consider the nonlinear ODE system
\begin{fleqn}
\be
 \left\{\begin{aligned}
     \dot{x_1} &= -(x_1-x_2)(1-x_1-x_2),\\
     \dot{x_2} &= x_1(2+x_2),
 \end{aligned}
\right.
\ee
\end{fleqn}
with its solutions' qualitative behavior explained in \cite{Boyce_DiPrima-2009}. The domain of interest is taken as $D = [-4, 4]\times[-3,3]$, which contains four critical points. The origin $(0, 0)$ is a unstable saddle point and $(0, 1)$ is an asymptotically stable spiral point. Node $(-2, -2)$ is an asymptotically stable point and point $(3, -2)$ is an unstable node.

\begin{figure}[htbp]
\centering
\subfigure[Architecture study with $L_{\infty}$ error]{
\includegraphics[width=0.33\linewidth]{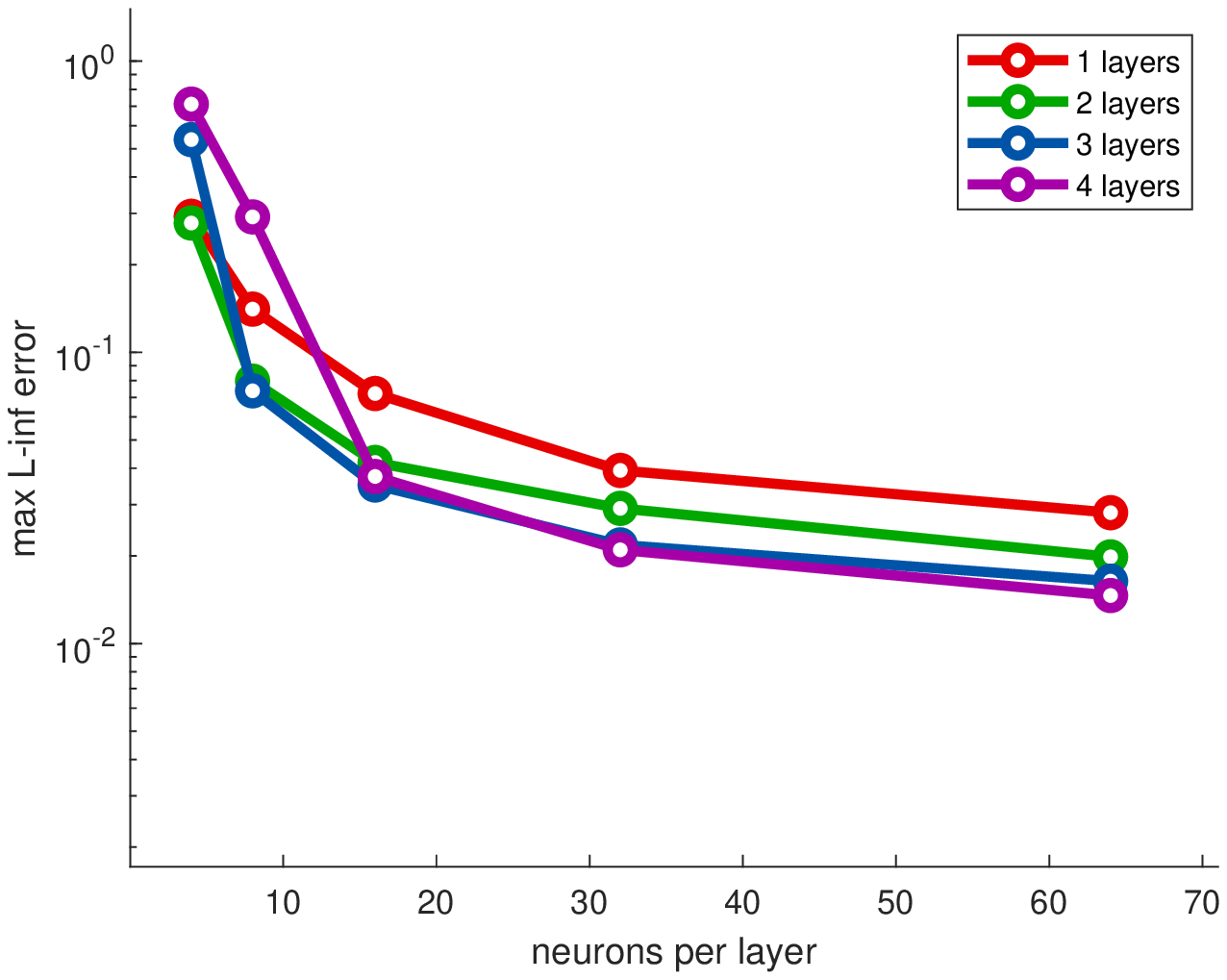}
%\caption{}
}
\subfigure[Architecture study with $L_{2}$ error]{
\includegraphics[width=0.33\linewidth]{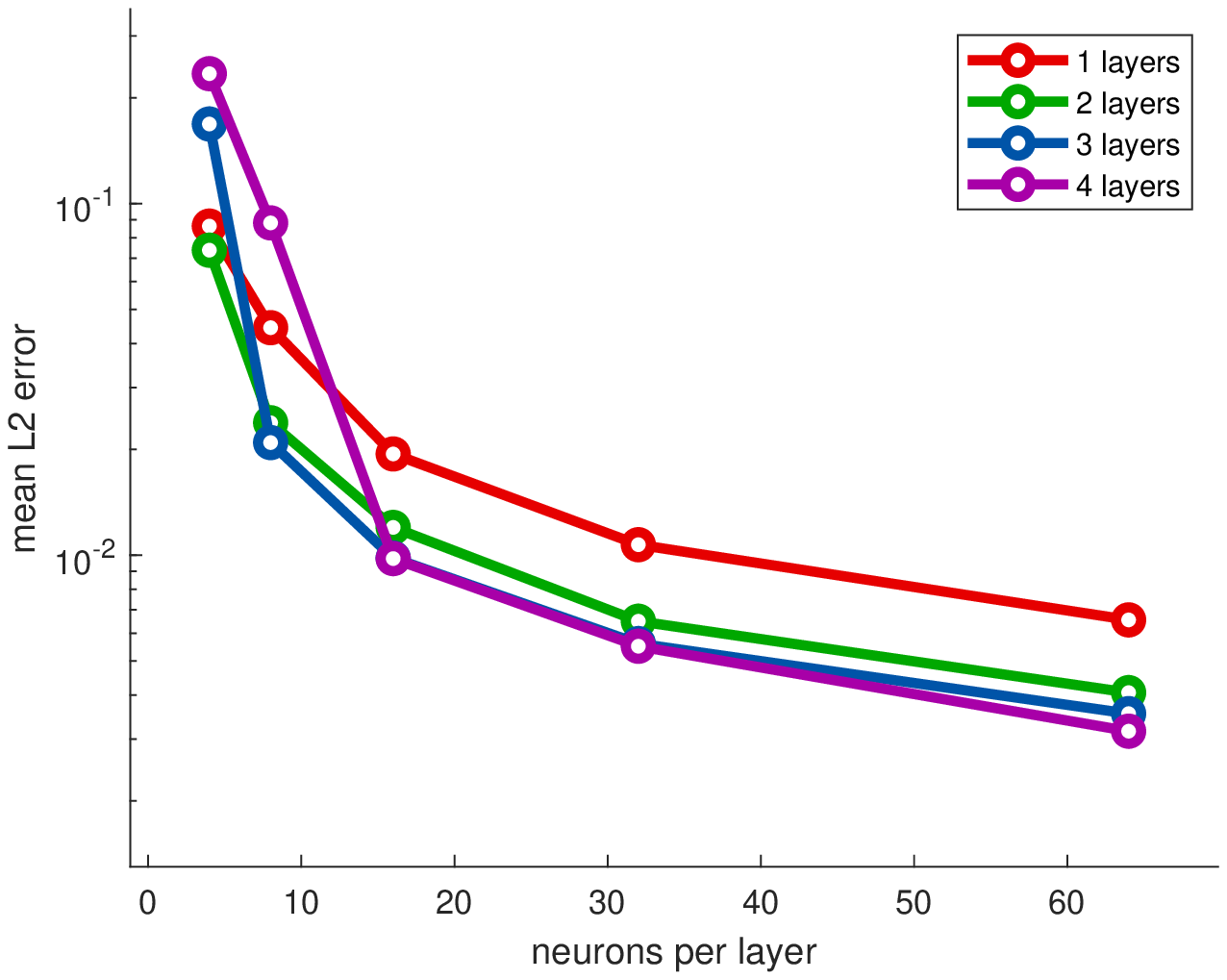}
%\caption{}
}
\quad\quad
\subfigure[Target study on the system]{
\includegraphics[width=0.33\linewidth]{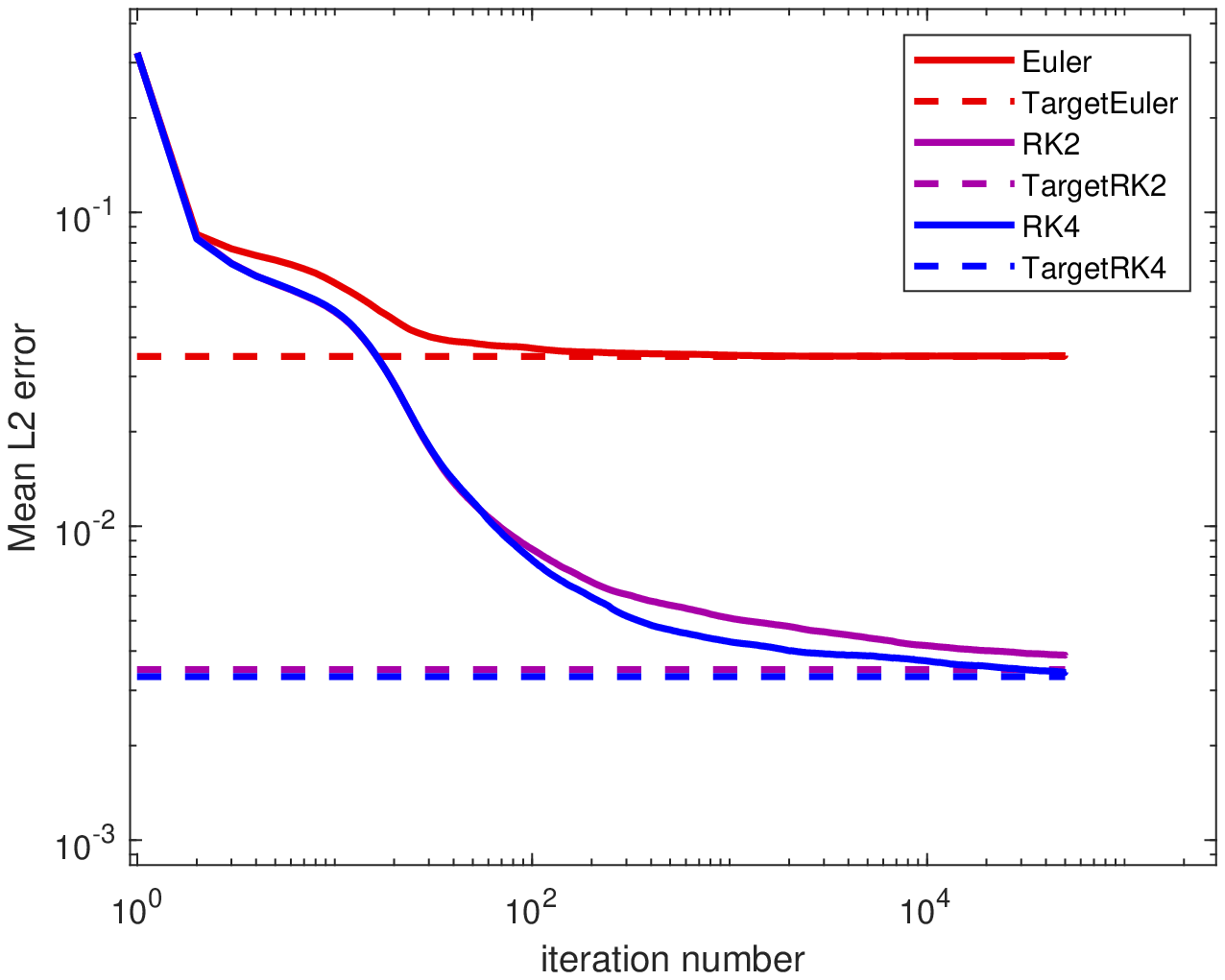}
%\caption{}
}
\subfigure[Phase plane trajectory simulation]{
\includegraphics[width=0.33\linewidth]{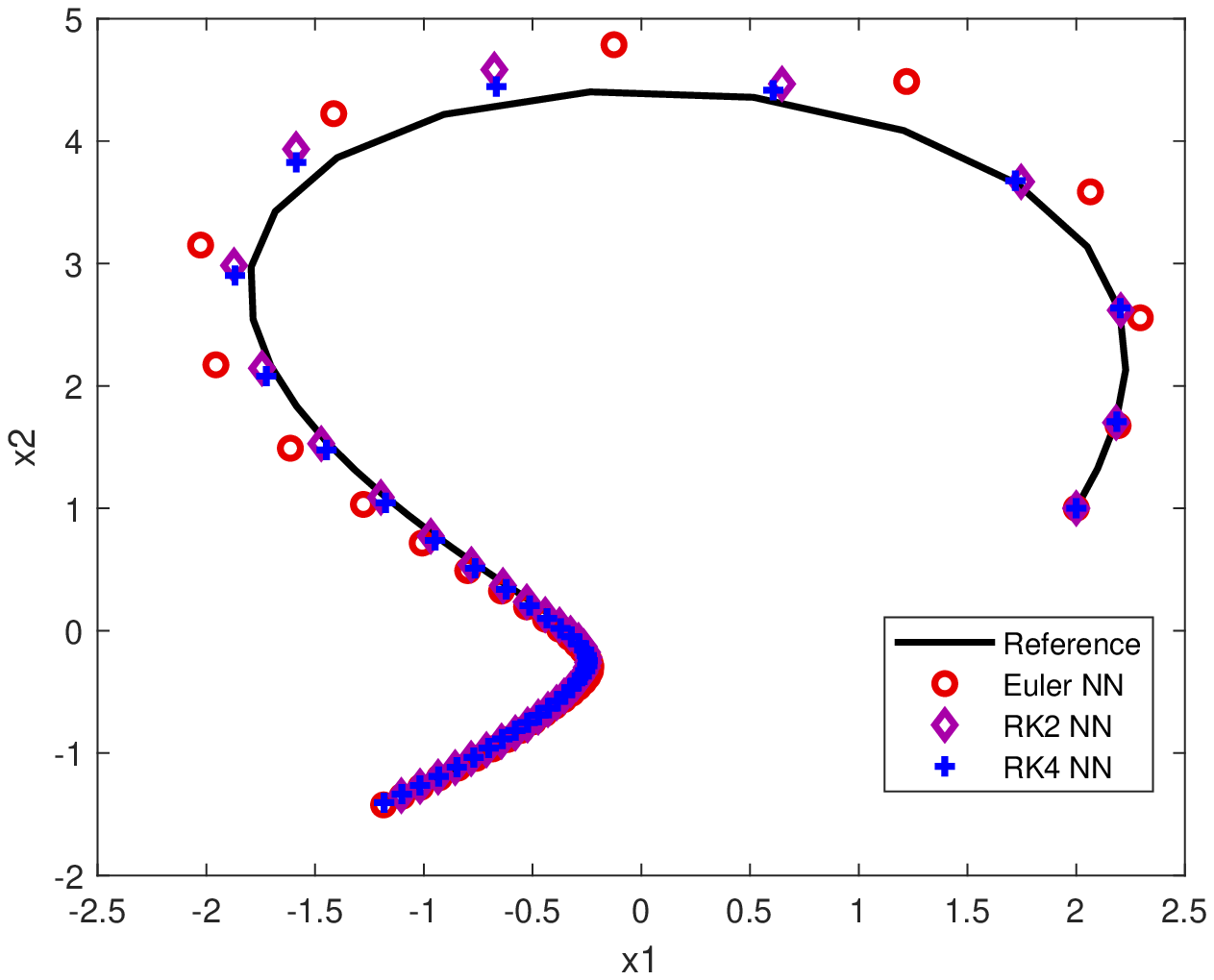}
%\caption{}
}
\caption{Example \ref{ex3} of the nonlinear ODE system with four critical points
} 
\label{fig:ex4-result}
\end{figure}

Time lag of $\Delta= 0.05$ is used in this example. For the architecture study, maximum $L_{\infty}$ error of \eqref{error-Max-Linfy} and mean $L_2$ error of \eqref{error-Mean-L2} for each architecture setting are illustrated in Figure \ref{fig:ex4-result} of part (a) and part (b). Mean $L_2$ error is one order of magnitude smaller than the $L_{\infty}$ error. We find the optimal choice for the network is hidden 2 layers and 64 neurons per layer.

For the target study, both network output and target errors of \eqref{error-Mean-L2} and \eqref{error-target} with targets generated from forward Euler, Runge-Kutta2 and Runge-Kutta4 with mesh size $\Delta=0.05$ are displayed in Figure \ref{fig:ex4-result} part (c). ResNet networks are very well trained and the network errors are dominated by target errors.

Solutions of the given problem are not regular enough, thus no difference is observed for the target error between Runge-Kutta2 and Runge-Kutta4 methods. The simulated solution trajectory starts at $(2,1)$ and runs up to final time $T=4.0$. In Figure \ref{fig:ex4-result} part (d) we present the three well trained ResNet approximations of the curve. ResNet solver trained from forward Euler gives larger error over the trace, yet the error becomes smaller when the solution curve gets close to the asymptotically stable node $(-2, -2)$.

% \begin{figure}[htbp]
% \centering
% \includegraphics[width=0.75\linewidth]{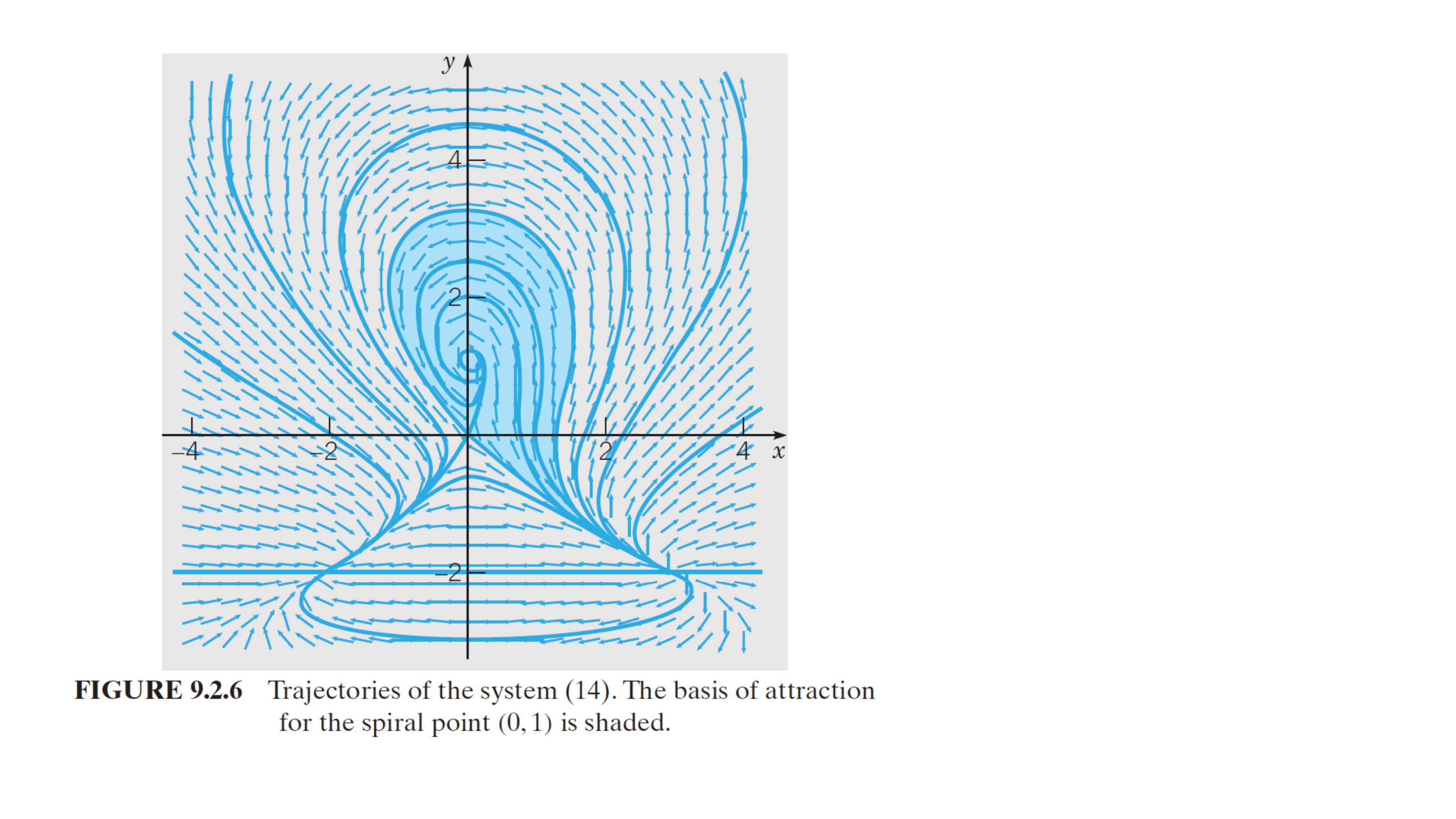}
% \caption{phase portrait demo (from the book)
% } 
% \label{fig:ex4-portrait}
% \end{figure}

%%%%%%%%%%%%%%%%%%%%%%%%%%%%%%%%%%%%%%%%%%%%%%%%%%%%%%%%%%%%%%%%%

\begin{example}\label{ex4} {\bf \emph {Cubic power ODE system with unit disk barrier}}
\end{example}
 
In this example, we consider the nonlinear ODE system
\begin{fleqn}
\be
 \left\{\begin{aligned}
     \dot{x_1} &= x_2-x_1(x_1^2+x_2^2-1),\\
     \dot{x_2} &= -x_1-x_2(x_1^2+x_2^2-1),
 \end{aligned}
\right.
\ee
\end{fleqn}
with the unit disk $D_b=\{\vx: \|\vx\|_2=1\}$ as a barrier. We consider the square domain of $D=[-2, 2]\times [-2, 2]$ as the domain of interest, which includes the barrier $D_b$.

\begin{figure}[htbp]
\centering
\subfigure[Architecture study with $L_{\infty}$ error]{
\includegraphics[width=0.33\linewidth]{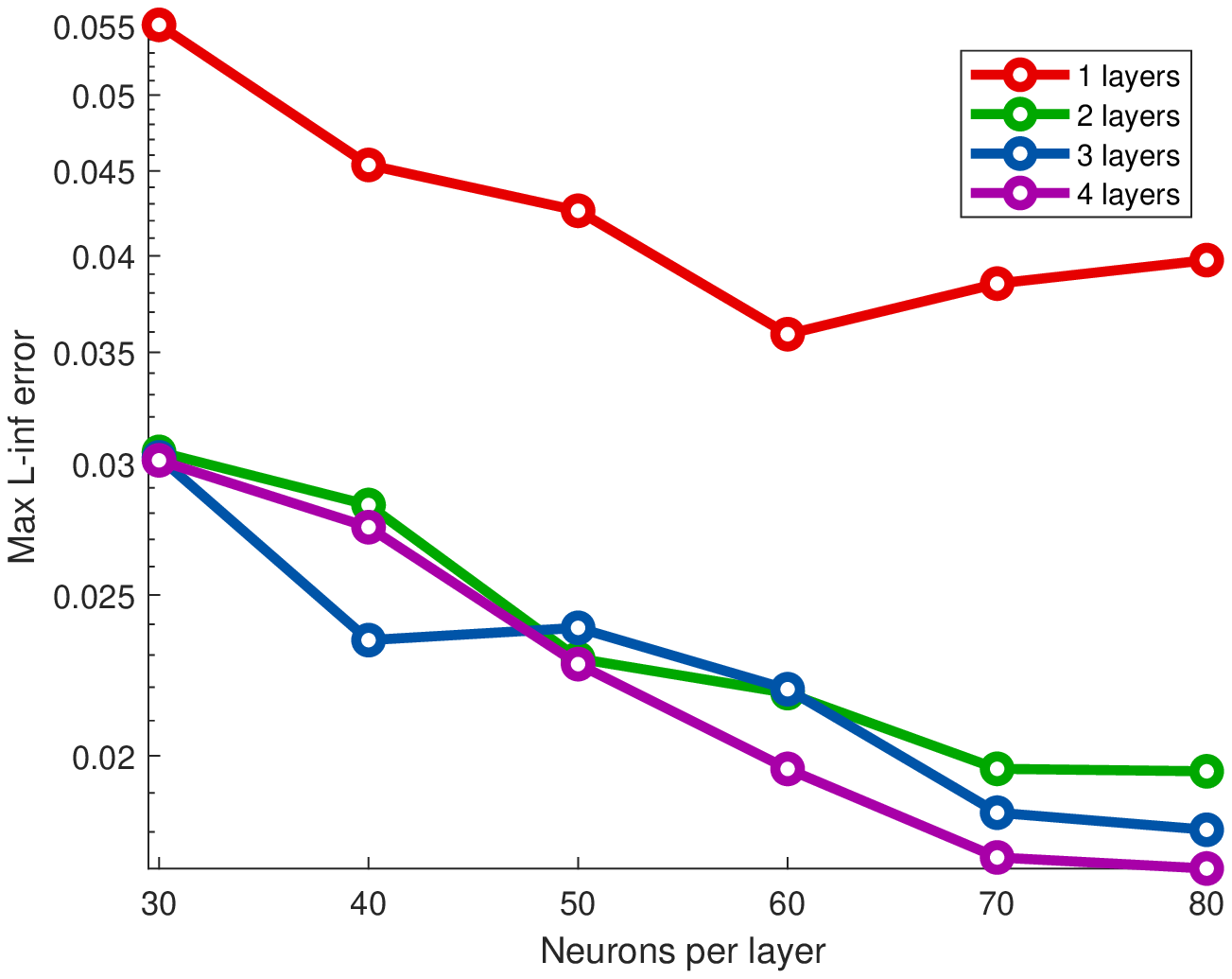}
%\caption{pic1}
}
\subfigure[Architecture study with $L_2$ error]{
\includegraphics[width=0.33\linewidth]{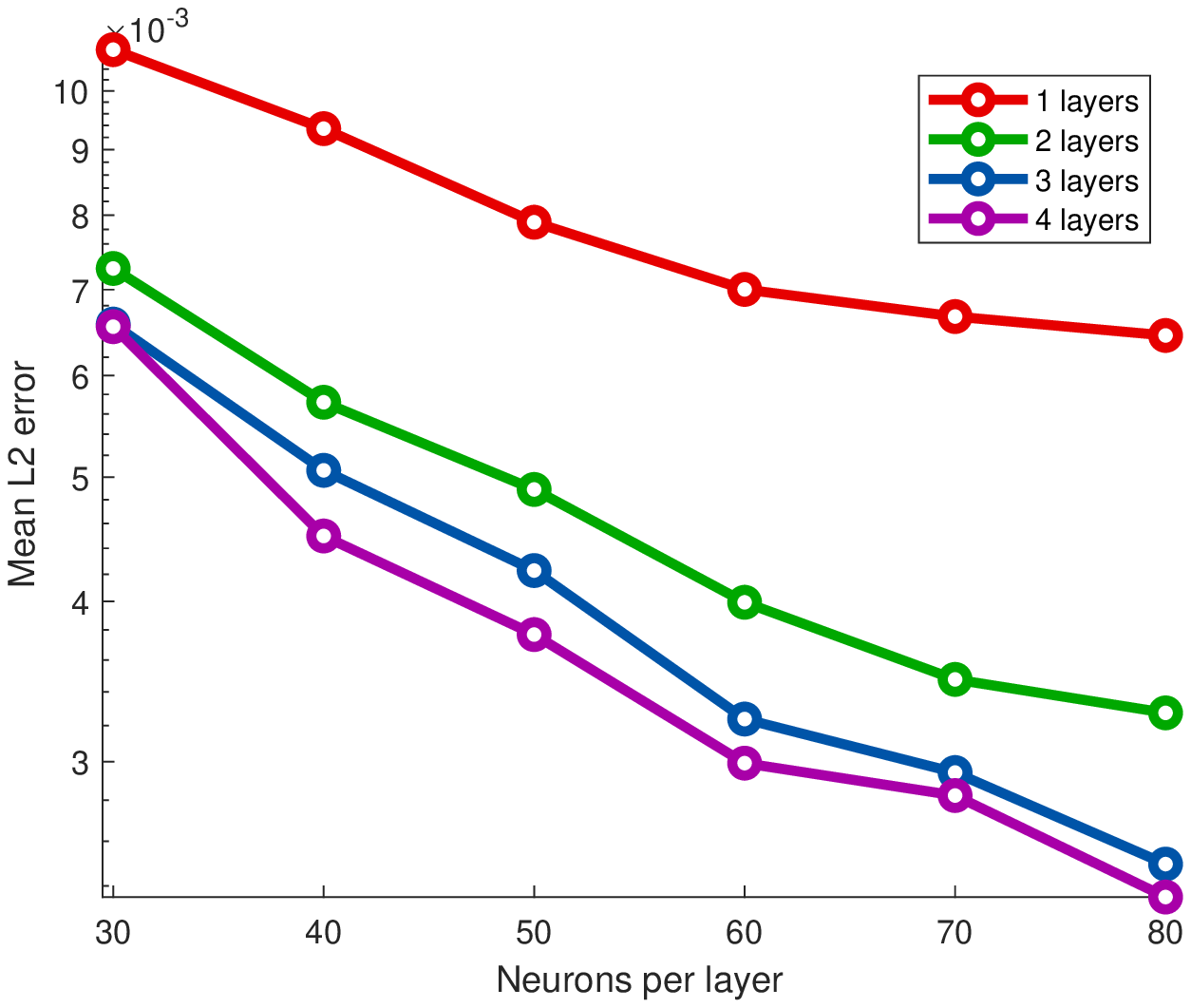}
%\caption{}
}\quad\quad
\subfigure[Target study on the system]{
\includegraphics[width=0.33\linewidth]{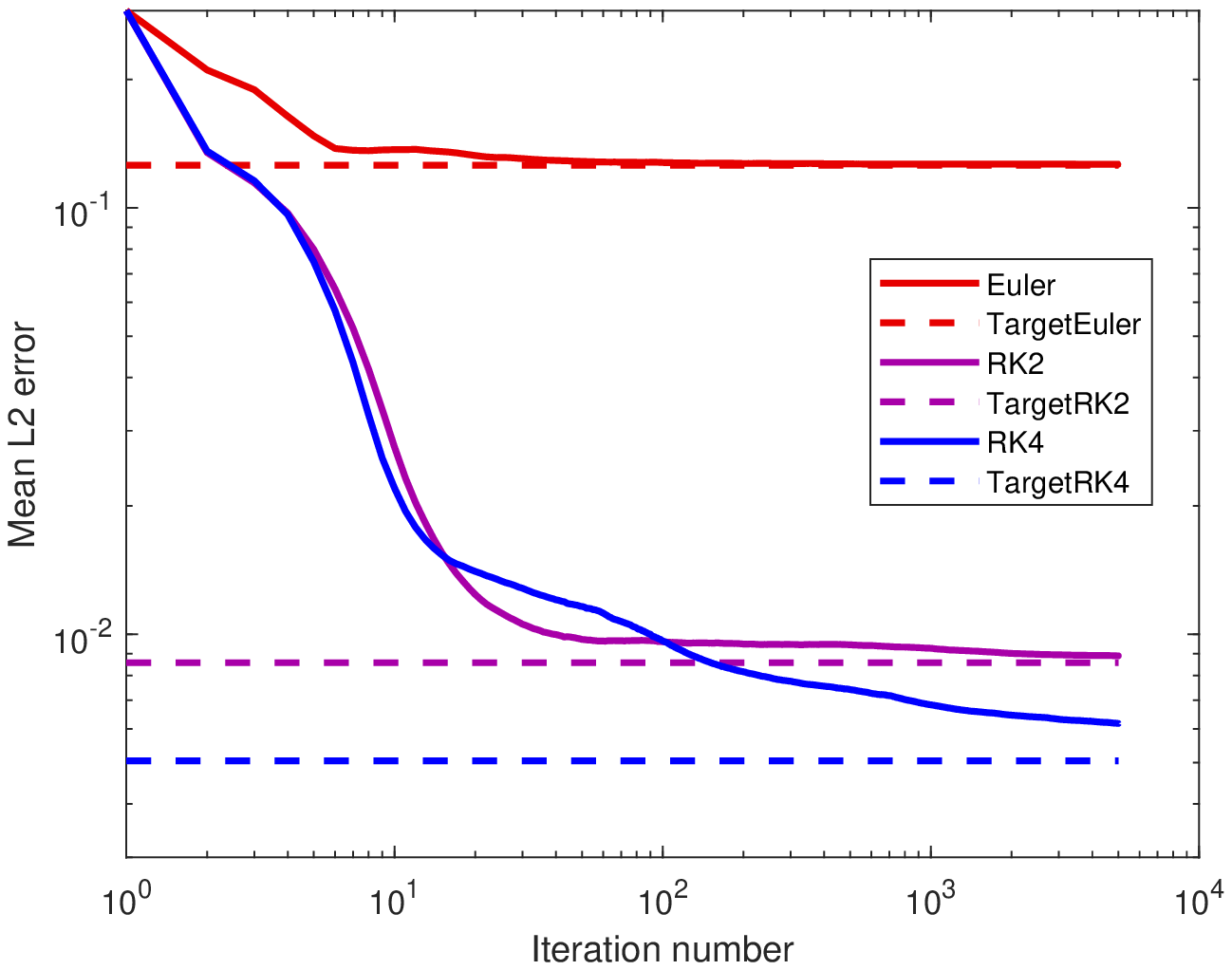}
%\caption{}
}
\quad
\subfigure[Phase plane trajectory simulation]{
\includegraphics[width=0.33\linewidth]{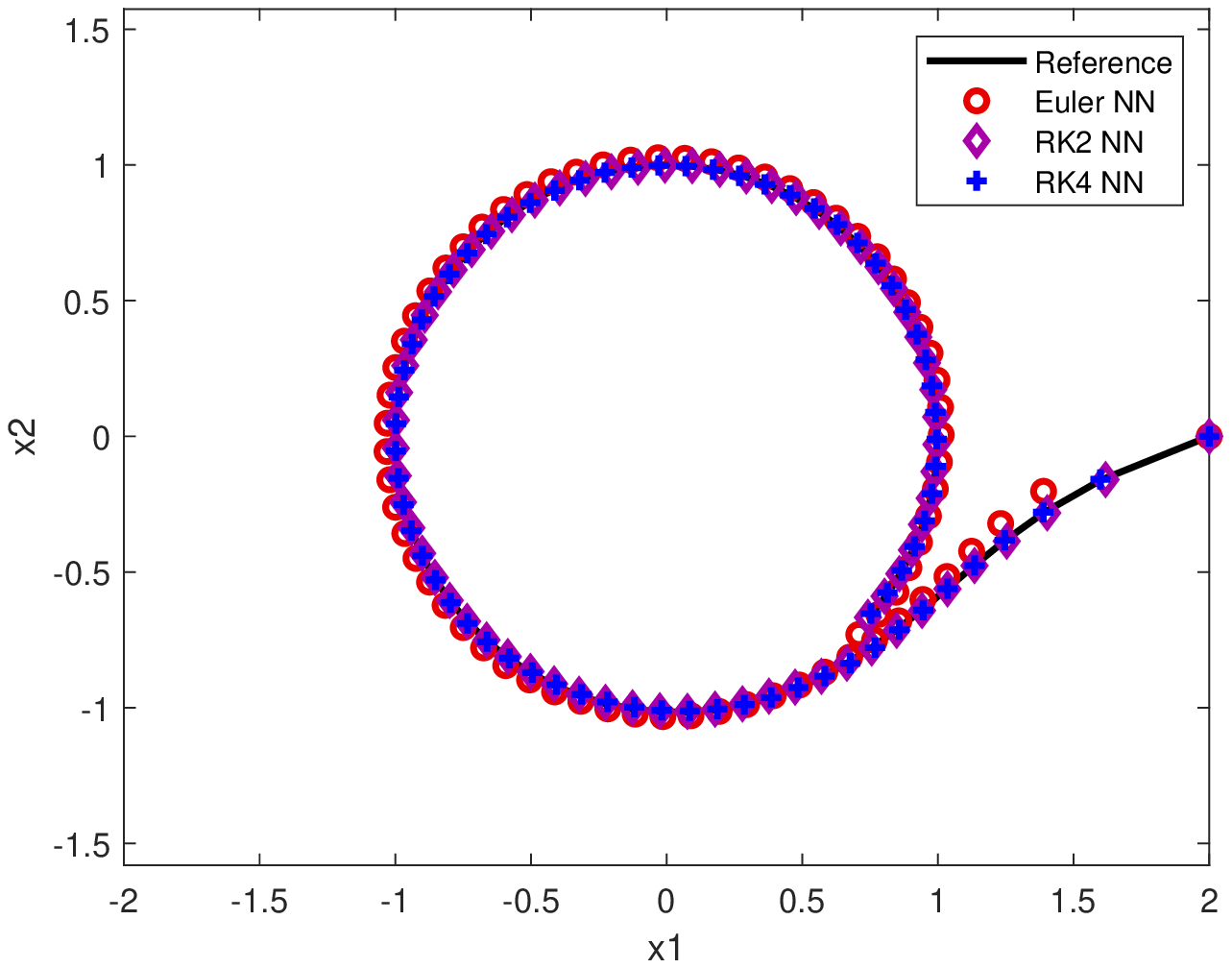}
%\caption{}
}
\caption{Example \ref{ex4} of the cubic power ODE system with unit disk barrier
} 
\label{fig:ex5-results}
\end{figure}

Time lag of $\Delta= 0.1$ is used in this example. For the architecture study, maximum $L_{\infty}$ error of \eqref{error-Max-Linfy} and mean $L_2$ error of \eqref{error-Mean-L2} for each architecture setting are computed and illustrated in Figure \ref{fig:ex4-result} of part (a) and part (b). The chosen optimal network architecture has 3 hidden layers and 80 neurons per layer in the network. 

For the target study, ResNet output and target errors of \eqref{error-Mean-L2} and \eqref{error-target} are displayed in Figure \ref{fig:ex4-result} part (c), where the training targets are generated from forward Euler, Runge-Kutta2 and Runge-Kutta4 methods with mesh size $\Delta=0.1$. Solid curves of ResNet errors merge into the dashed lines of target errors over iterations. Networks are well trained and ResNet errors are dominated by target errors.

The initial condition of the simulated solution curve on the phase plane is set as $\vx = [2, 0]$ and we run the simulation to final time $T = 7.0$. Figure \ref{fig:ex5-results} part (c) show the three well trained ResNet approximations of the trajectory. Even though the forward Euler ResNet network error is on the level of $10^{-1}$, its simulation matches well with the reference solution, which goes into the barrier and circulates around $D_b$ as $t \rightarrow \infty$.

%%%%%%%%%%%%%%%%%%%%%%%%%%%%%%%%%%%%%%%%%%%%%%%%%%

%%%%%%%%%%%%%%%%%%%%%%%%%%%%%%%%%%%%%%%%%%%%%%%%%%

\begin{example}\label{ex5} {\bf \emph {Modified Lotka-Volterra Predator-Prey model}}
\end{example}

In this example we consider the modified Lotka-Volterra model 
\begin{fleqn}
\be
 \left\{\begin{aligned}
     \dot{x_1} &= x_1\left(1-0.2x_1-\frac{2x_2}{x_1+6}\right),\\
     \dot{x_2} &= x_2\left(-0.25+\frac{x_1}{x_1+6}\right),
 \end{aligned}
\right.
\ee
\end{fleqn}
from \cite{Boyce_DiPrima-2009}.
Time lag of $\Delta= 0.1$ is applied and all learning data pairs are taken from the domain of interest $D = [0, 5]\times[0, 5]$. For the architecture study, maximum $L_{\infty}$ error of \eqref{error-Max-Linfy} and mean $L_2$ error of \eqref{error-Mean-L2} for each architecture setting are calculated and illustrated in Figure \ref{fig:ex6-result} part (a) and part (b). The optimal architecture of the network is 2 hidden layers with 128 neurons per layer.

\begin{figure}[htbp]
\centering
\subfigure[Architecture study with $L_{\infty}$ error]{
\includegraphics[width=0.33\linewidth]{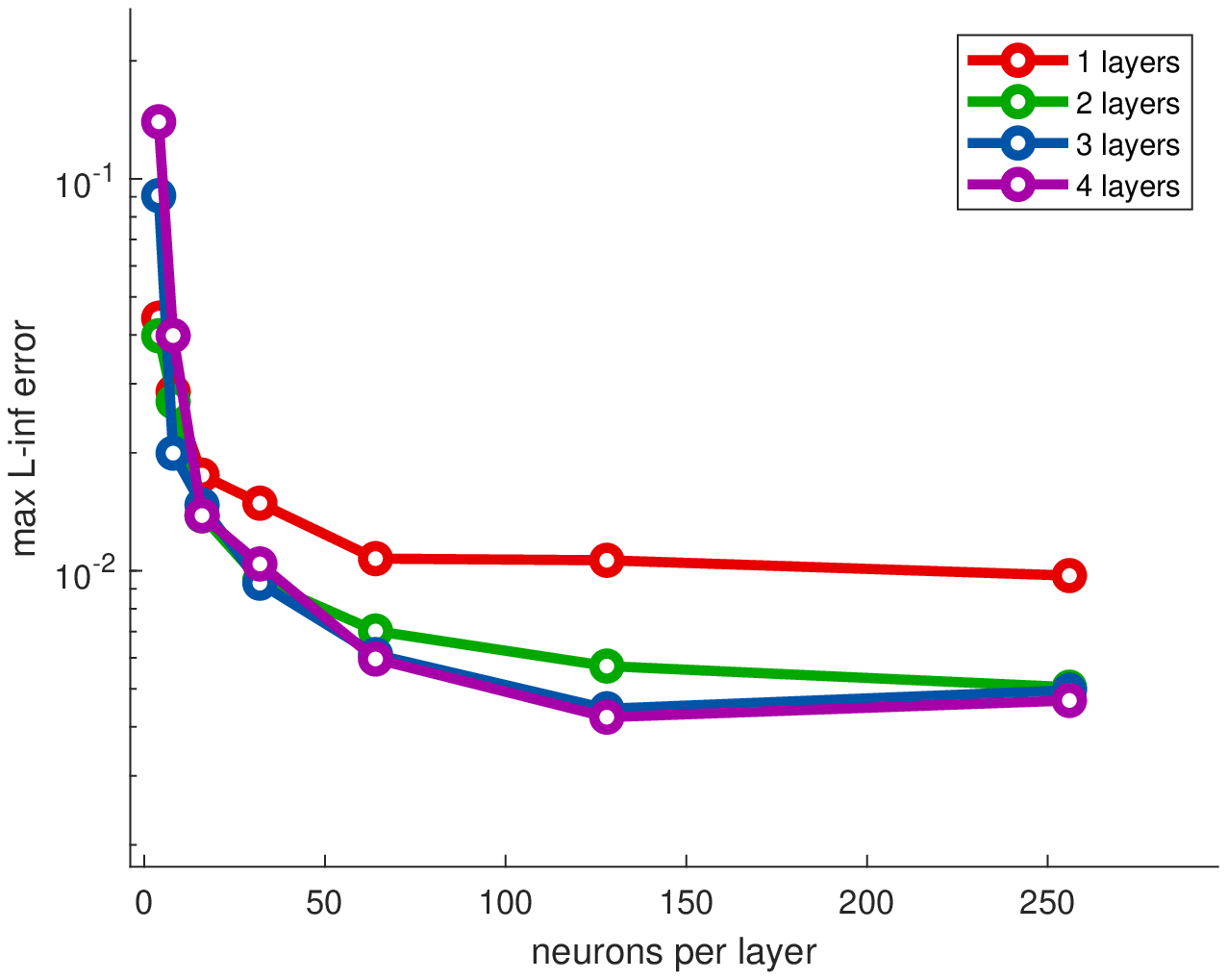}
%\caption{}
}
\subfigure[Architecture study with $L_{2}$ error]{
\includegraphics[width=0.33\linewidth]{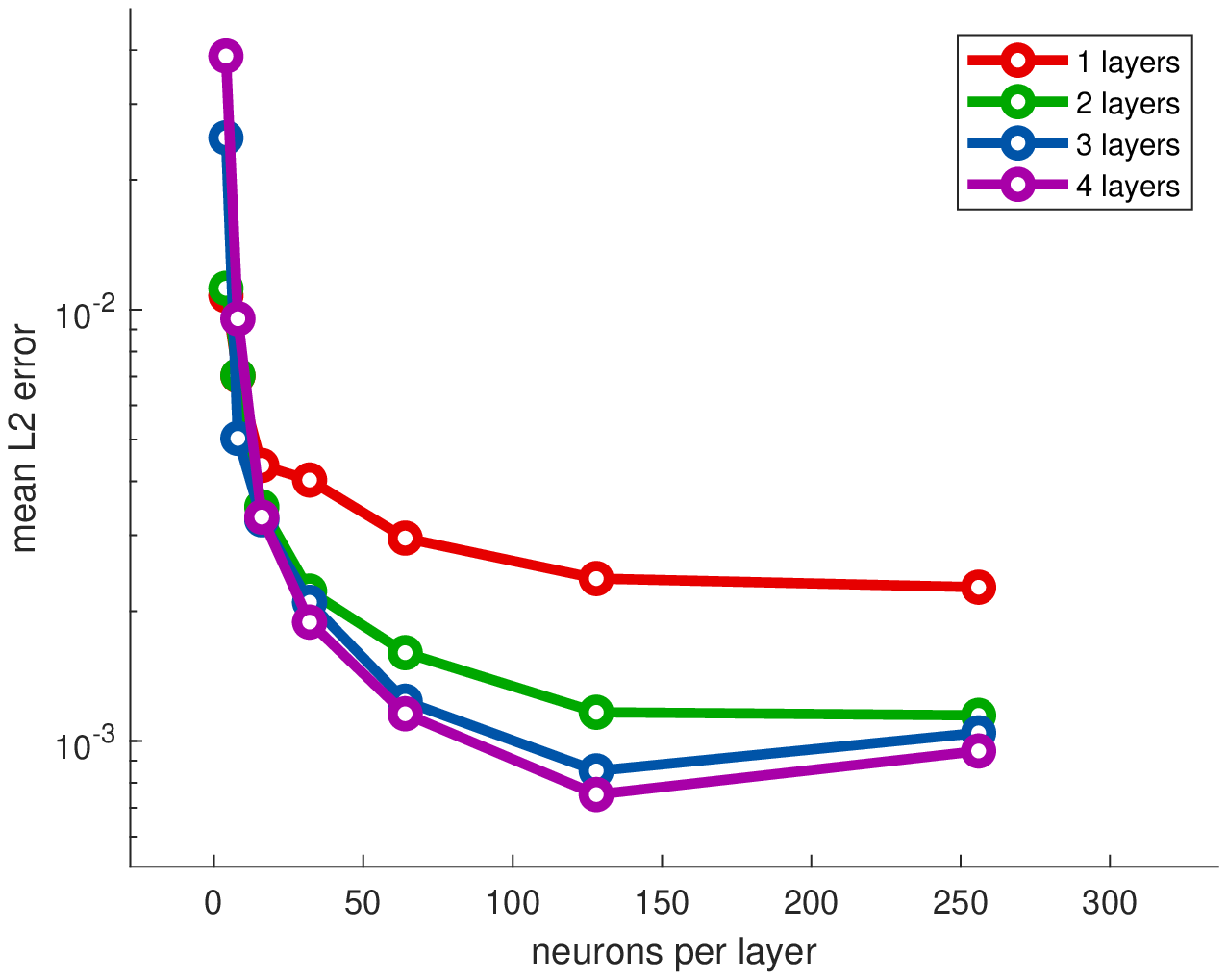}
%\caption{}
}
\quad\quad
% \subfigure[Optimally trained network vs true trajectory]{
% \includegraphics[width=0.45\linewidth]{fig/ex7/ex7-optimal.eps}
% %\caption{}
% }
\subfigure[Target study of the system]{
\includegraphics[width=0.33\linewidth]{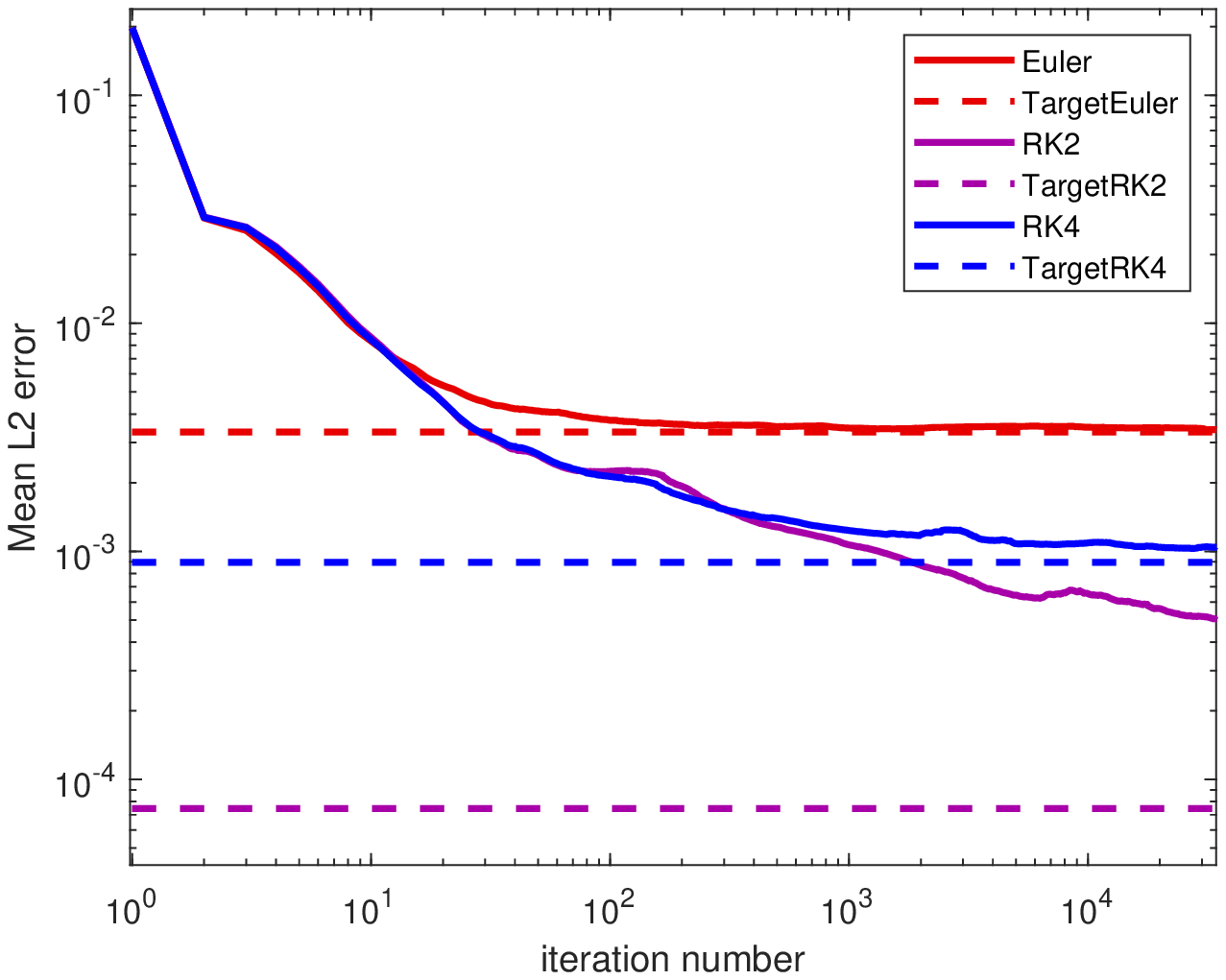}
%\caption{}
}
\subfigure[Phase plane trajectory simulation]{
\includegraphics[width=0.33\linewidth]{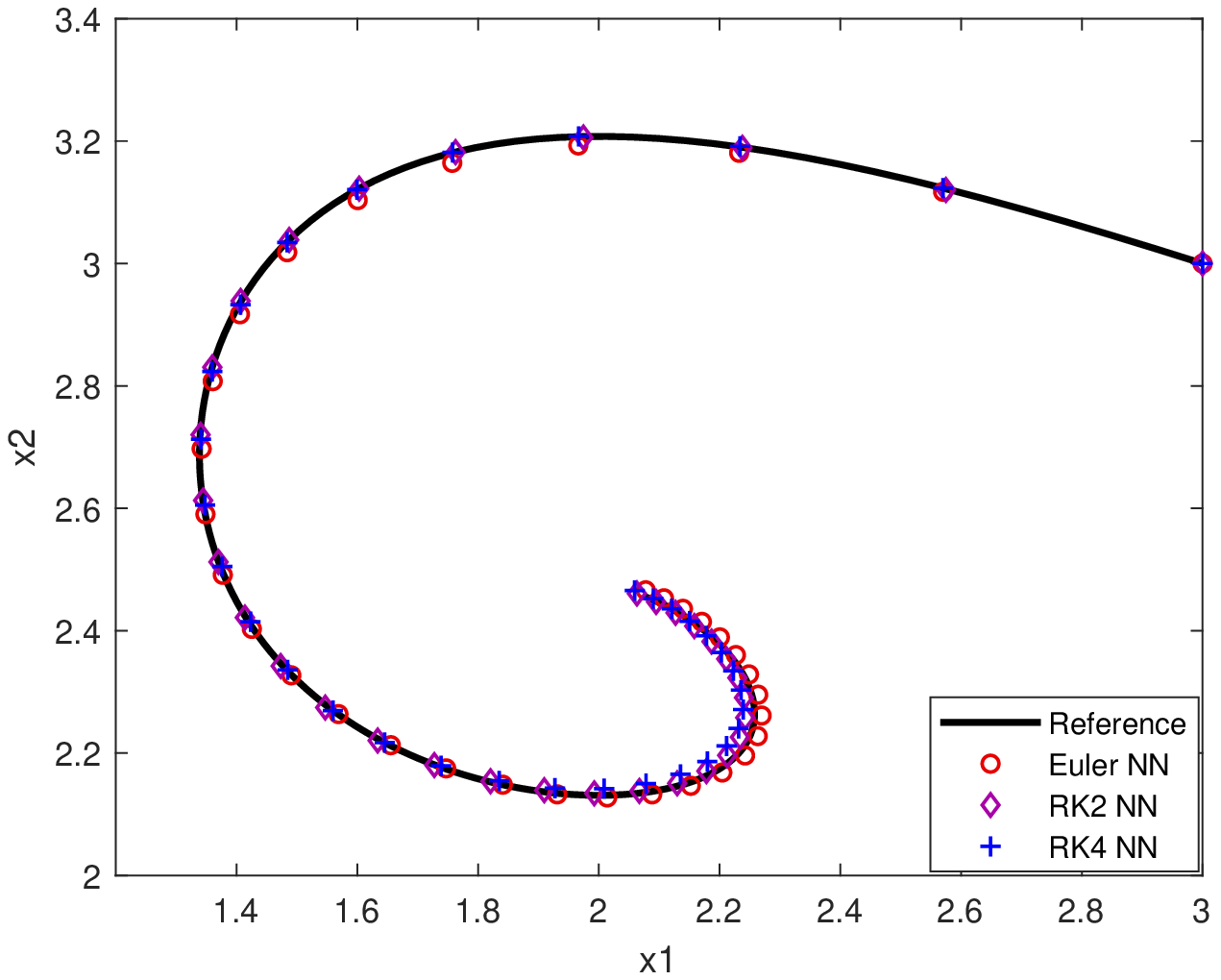}
%\caption{}
}
\caption{Example \ref{ex5} of the modified Lotka-Volterra predator-prey model 
} 
\label{fig:ex6-result}
\end{figure}

For the target study, ResNet output and target errors of \eqref{error-Mean-L2} and \eqref{error-target} are calculated and displayed in Figure \ref{fig:ex4-result} part (c), with the training targets generated from forward Euler, Runge-Kutta2 and Runge-Kutta4 methods correspondingly with mesh size $\Delta=0.1$. Solid curves of ResNet errors tend to merge into the dashed lines of target errors over iterations. For this example, errors of Runge-Kutta2 is smaller than that of Runge-Kutta4.

The simulated solution trajectory starts at $\vx=(3, 3)$ and runs up to final time  $T=20.0$. In Figure \ref{fig:ex6-result} part (d) we present the three well trained ResNet solvers approximations of the trajectory. For visualization purpose, every six output points are skipped when drawing the results of part (d) of Figure \ref{fig:ex6-result}. The target study shows the ResNet trained from Runge-Kutta2 gives smaller error in terms of one step implementation with time lag $\Delta=0.1$. However, the three ResNet solvers all agree well with the reference solution after long time run of $T=20.0$.

%%%%%%%%%%%%%%%%%%%%%%%%%%%%%%%%%%%%%%%%%%%%%%%%%%

\begin{example}\label{ex6} {\bf \emph {Non-autonomous ODE system}}
\end{example}

In this example we consider the following non-autonomous ODE system involving three unknowns and right hand side that depends on $t$ explicitly
\begin{fleqn}
\be\label{ex:ex6}
 \left\{\begin{aligned}
     \dot{x_1} &= x_1+x_3-t+e^{-t},\\
\dot{x_2} &=x_1+x_2+5,  \\
\dot{x_3} &= -2x_1-x_3-2t-e^{-t}.
 \end{aligned}
\right.
\ee
\end{fleqn}
The ResNet solver can not handle the non-autonomous ODE system directly, since the time variable $t$ changes at each time step and should be treated as an input in the neural network. Thus we introduce auxiliary variable $x_4=t$ and rewrite the system of \eqref{ex:ex6} into the following autonomous  system with four variables
\begin{fleqn}
\begin{equation*}
 \left\{\begin{aligned}
     \dot{x_1} &= x_1+x_3-x_4+e^{-x_4},\\
\dot{x_2} &=x_1+x_2+5,  \\
\dot{x_3} &= -2x_1-x_3-2x_4-e^{-x_4},\\
\dot{x_4} &=1.
\end{aligned}
\right.
\end{equation*}
\end{fleqn}

\begin{figure}[htbp]
\centering
\subfigure[Architecture study with $L_{\infty}$ error]{
\includegraphics[width=0.33\linewidth]{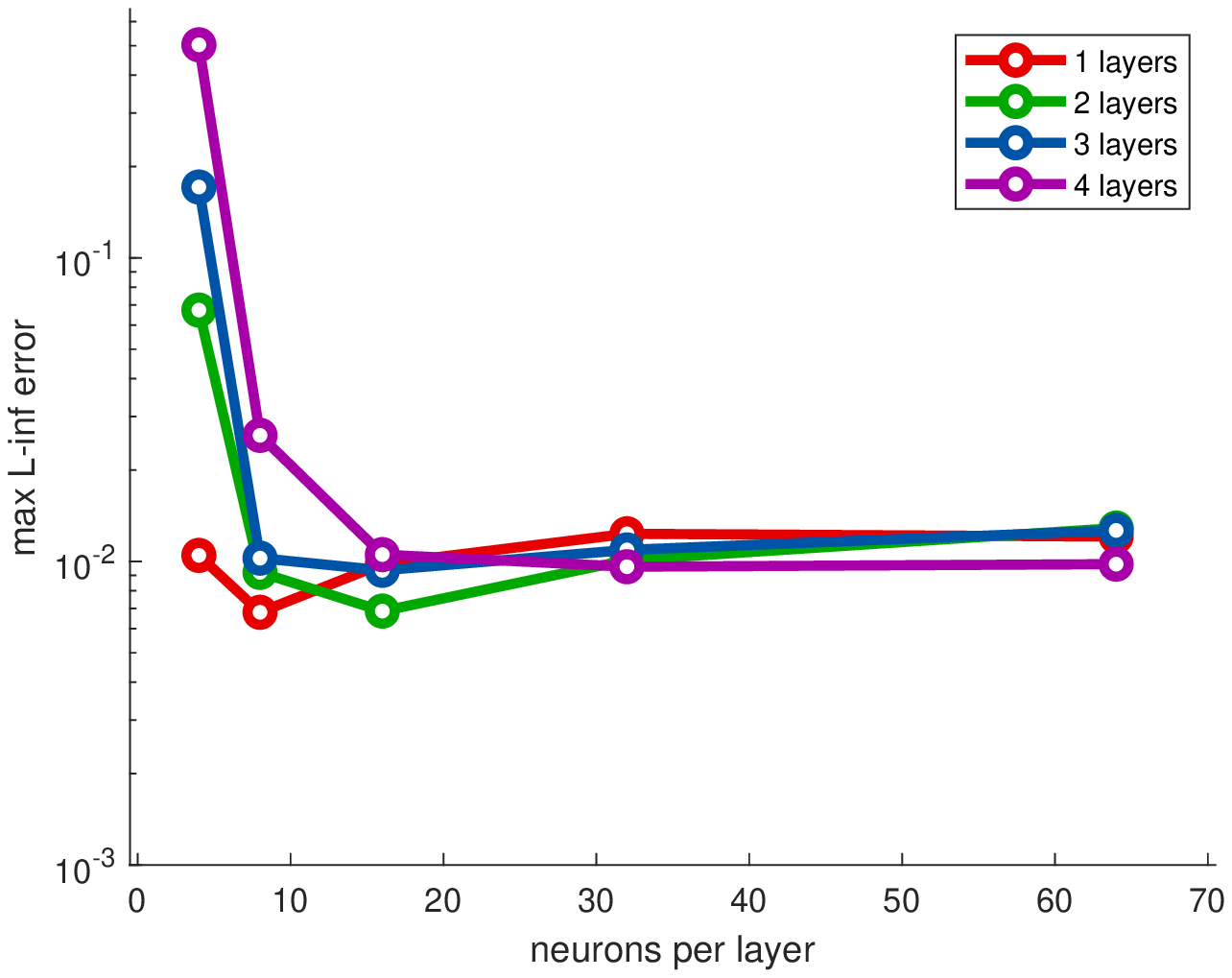}
}
\subfigure[Architecture study with $L_2$ error]{
\includegraphics[width=0.33\linewidth]{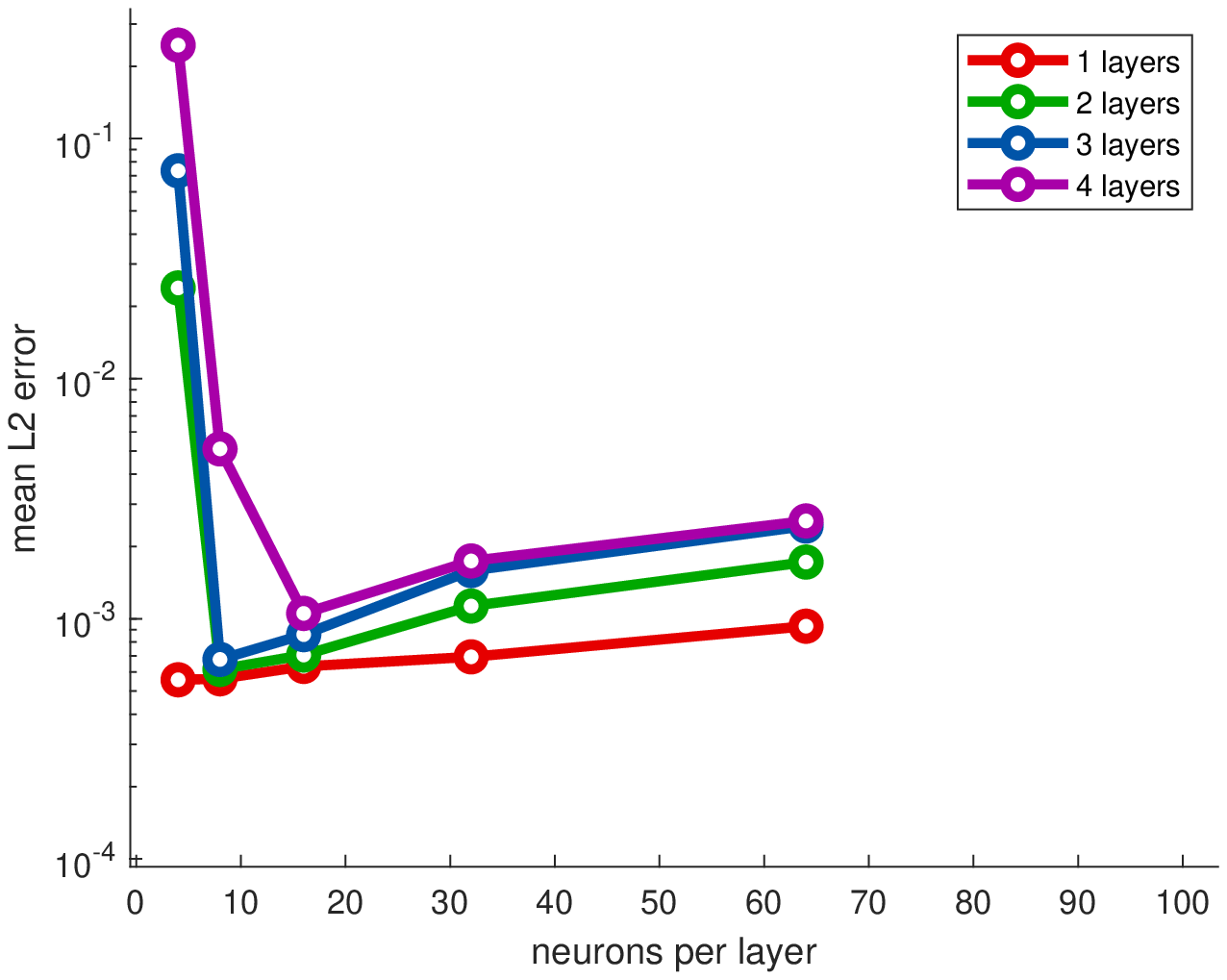}
%\caption{}
}\quad\quad
\subfigure[Target study on the system]{
\includegraphics[width=0.33\linewidth]{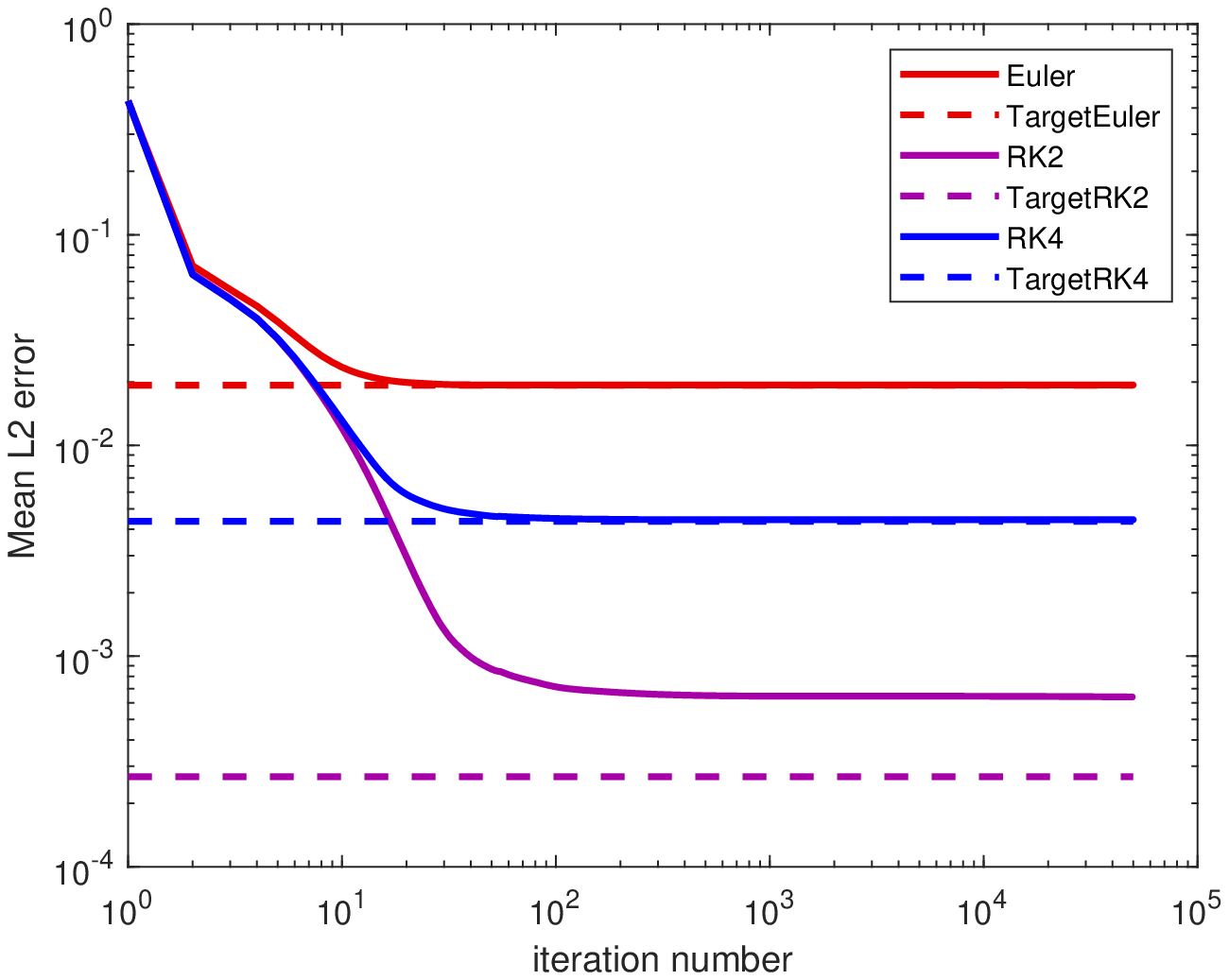}
%\caption{}
}
\subfigure[3D trajectory simulation]{
\includegraphics[width=0.33\linewidth]{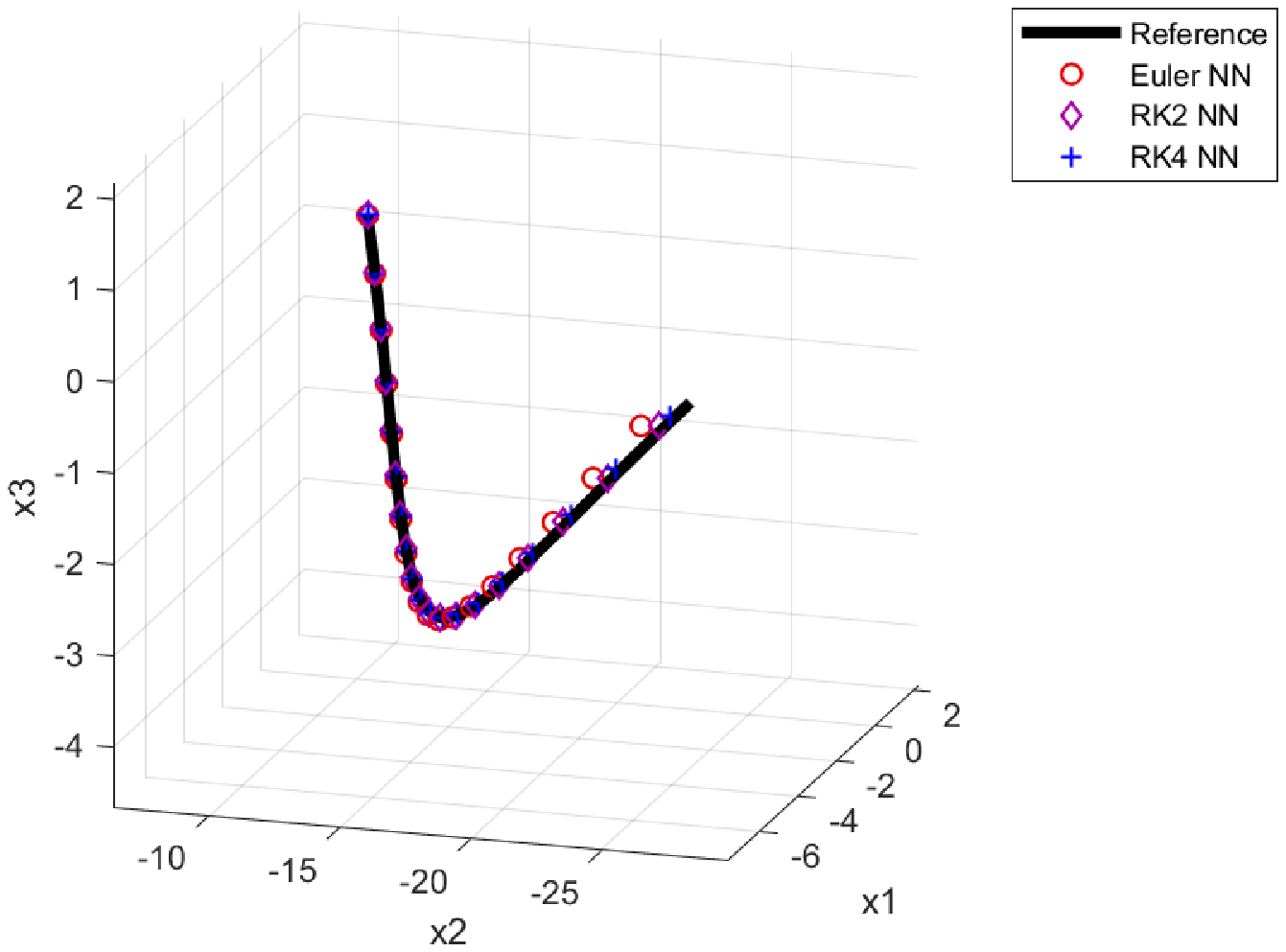}
%\caption{}
}
\caption{Example of \ref{ex6} of none autonomous ODE system
} 
\label{fig:ex3-result}
\end{figure}
Notice the system of (\ref{ex:ex6}) is equivalent to the above  autonomous ODE system. For this example we adapt the domain of interest of $D=[-5,5]\times[-10,0]\times[-6,4]\times[1, 2]$ and time lag of $\Delta= 0.05$. We still apply $J=2000$ training data pairs which are sufficient for the autonomous system since the major body of the system is linear. The architecture study shows the optimal choice is 1 layer with 8 neurons per layer, according to the errors of \eqref{error-Max-Linfy} and \eqref{error-Mean-L2}. Again, errors with different arrangement of architecture settings are illustrated in Figure \ref{fig:ex7-result} of part (a) and part (b). 

For the target study, network output and target errors of \eqref{error-Mean-L2} and \eqref{error-target} with training data generated from forward Euler, Runge-Kutta2 and Runge-Kutta4 with mesh size $\Delta=0.05$ are displayed in Figure \ref{fig:ex3-result} part (c). Solid curves of ResNet errors merge into the dashed lines of target errors over iterations. For this example, target and ResNet errors from Runge-Kutta2 are smaller than that of Runge-Kutta4. The networks are still well trained and ResNet errors are dominated by target errors.

Regarding solution curve approximation, we have the trajectory initial location set at $(2,-9, 0)$ with initial time $t_0=1.1$ and run up to final time $T=2.0$. The three well trained ResNet simulations accompanied with the reference solution are presented in Figure \ref{fig:ex3-result} part (d). The three ResNet solvers all agree well with the reference solution.

% \begin{figure}[htbp]
% \centering
% \includegraphics[width=0.45\linewidth]{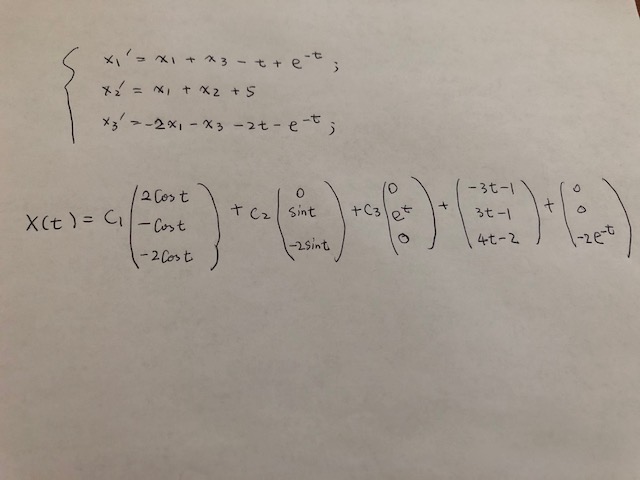}
% \end{figure}

%From \eqref{exact_solution_ex3}, we attach the exact solution (general solution) expression of the original system (\ref{ex:example3}). The system linear part coefficient matrix have 3 eigenvalues, with the first 2 as complex conjugate pairs of $\lambda_{1,2}=\pm i$ and the third as $\lambda_3=1$. The first 2 eigenvectors are the real and imaginary part of the eigenvector for $\lambda_1=+i?$, and the third is the eigenvector for $\lambda_3=1$. With initial value $(x^I_1,x^I_2,x^I_3)$, for example at $t=0$, we are supposed to plug into the general solution and figure out the coefficients of $c_1, c_2, c_3$. Yet, we see, if we choose $t_0=0$ and we have $\sin(0)=0$, we will never recover $c_2$ coefficient? That seems to imply initial time should not be taken as $t_0=0$? Any thought?

%\begin{figure}[htbp]
%\centering
%\quad
%\subfigure[Accuracy training on the system]{
%\includegraphics[width=0.7\linewidth]{fig/ex2/ex2-different_target2.eps}
%\caption{}
%}
%\label{fig:ex3-result2}
%\end{figure}

%%%%%%%%%%%%%%%%%%%%%%%%%%%%%%%%%%%%%%%%%%%%%%%%%%
\begin{example}\label{ex7} {\bf \emph {Van der Pol oscillator}}
\end{example}

In this example, we consider the following second order ODE
\be\label{eq:ex7}
u''-\mu (1-u^2)u'+u=0,
\ee
that describes the current $u$ in an electric circuit involving a triode. Parameter $\mu>0$ is a constant that determines the sharpness of the oscillatory limit cycle. Here we have $\mu=0.2$. We introduce variable $x_2(t)$ to approximate $u'(t)$, with $x_1(t)=u(t)$ we rewrite the second order ODE of \eqref{eq:ex7} into the following first order system
\begin{equation*} \label{eq:ex7-system}
 \left\{\begin{aligned}
     \dot{x_1} &= x_2,\\
\dot{x_2} &= -x_1+\mu(1-x_1^2)x_2.
 \end{aligned}
\right.
\end{equation*}
Since the second variable $x_2$ represents the solution slope that changes dramatically over the limit cycle, we adapt the domain of interest as $D=[-3, 3]\times[-20, 20]$ to accommodate the $x_2$ variable. Time lag of $\Delta= 0.05$ is applied.
\begin{figure}[ht]
\centering
\subfigure[Architecture study with $L_{\infty}$ error]{
\includegraphics[width=0.33\linewidth]{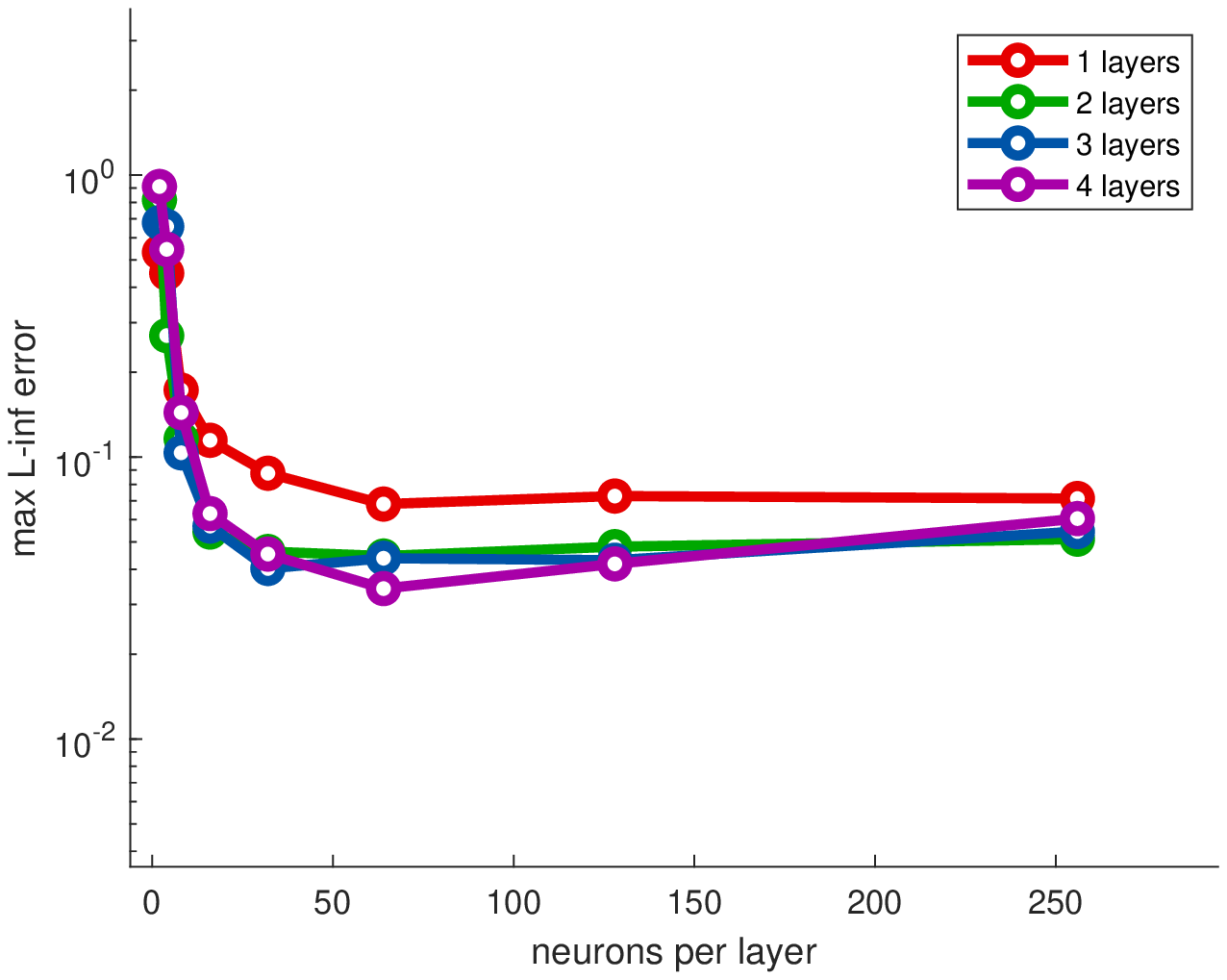}
%\caption{}
}
\subfigure[Architecture study with $L_{2}$ error]{
\includegraphics[width=0.33\linewidth]{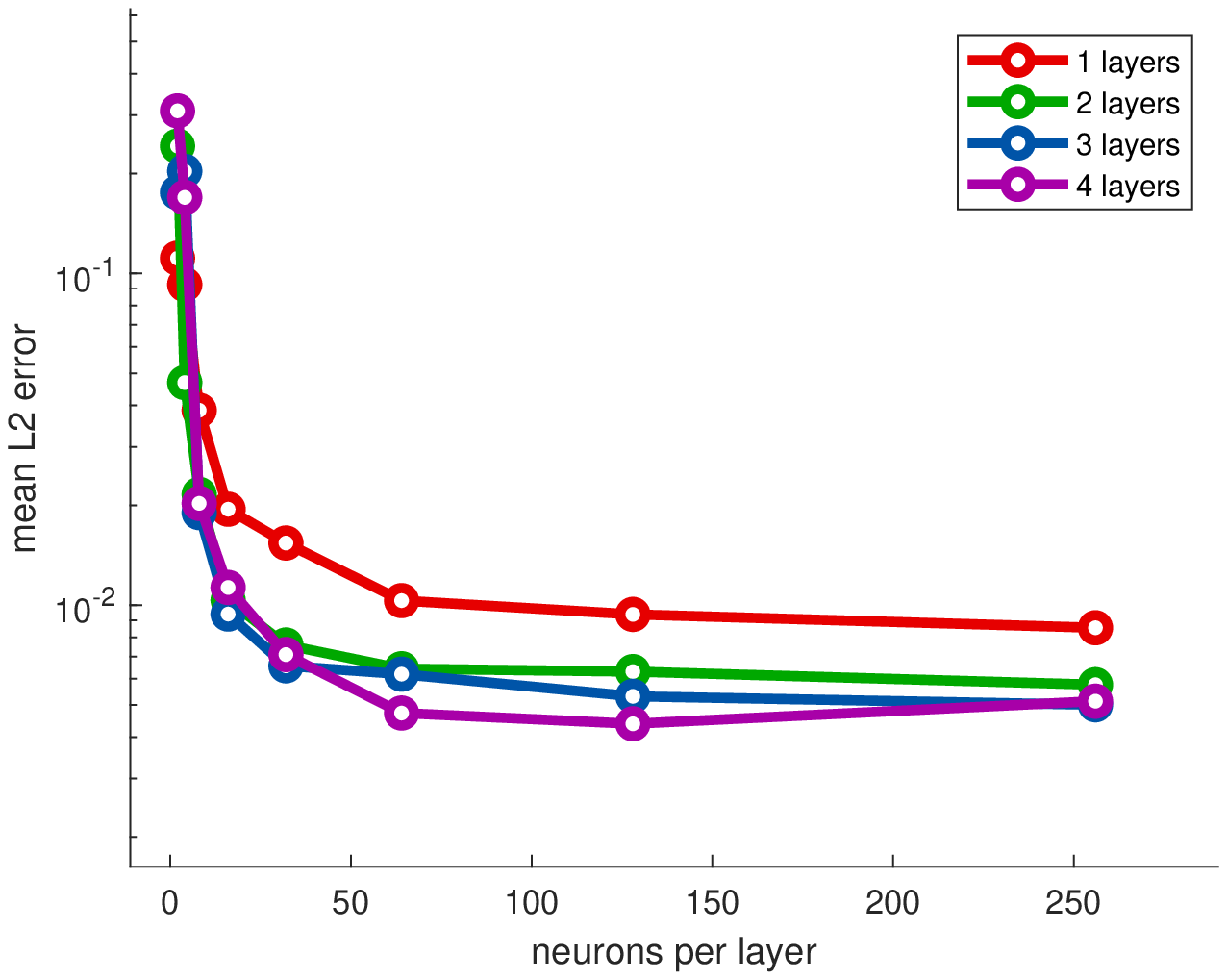}
%\caption{}
}
\quad
% \subfigure[Optimally trained network vs true trajectory]{
% \includegraphics[width=0.45\linewidth]{fig/ex8/ex8-optimal.eps}
% %\caption{}
% }
\subfigure[Target study on the system]{
\includegraphics[width=0.33\linewidth]{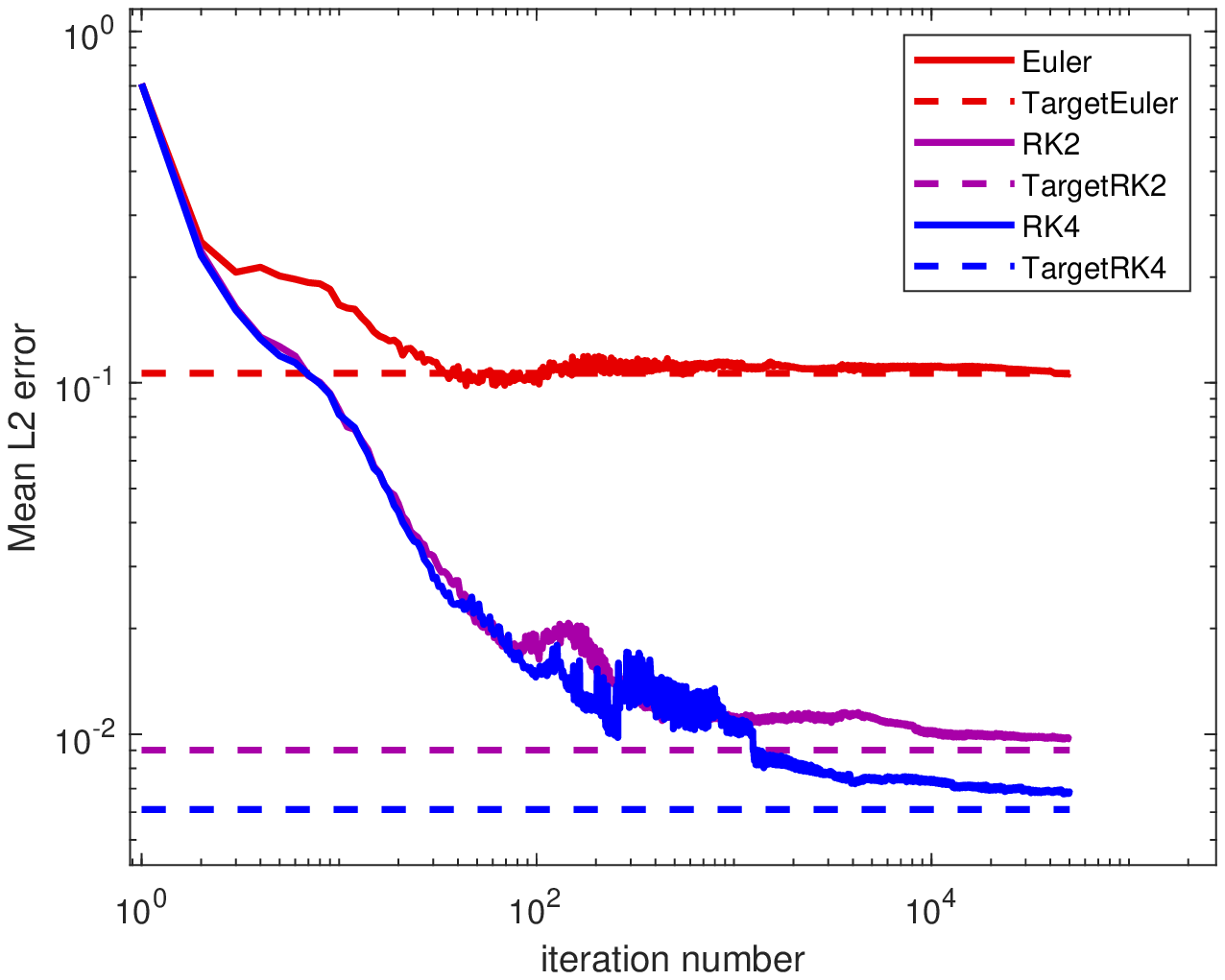}
%\caption{}
}
\subfigure[Phase plane trajectory simulation]{
\includegraphics[width=0.33\linewidth]{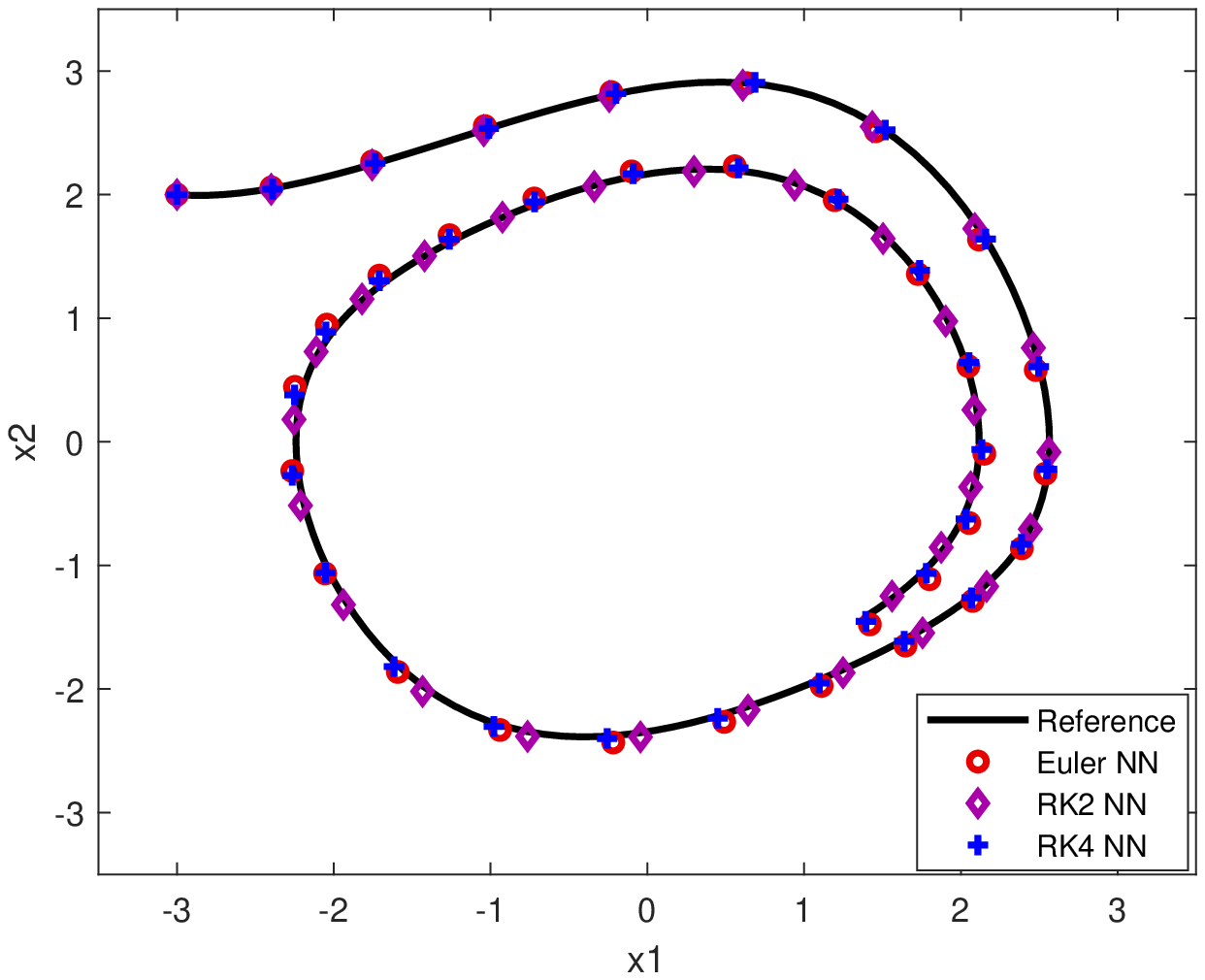}
%\caption{}
}
\caption{Example \ref{ex7} of the Van der Pol equation
} 
\label{fig:ex7-result}
\end{figure}

For the architecture study, maximum $L_{\infty}$ error of \eqref{error-Max-Linfy} and mean $L_2$ error of \eqref{error-Mean-L2} for each architecture setting are calculated and illustrated in Figure \ref{fig:ex7-result} of part (a) and part (b). The optimal architecture we find for the network is 2 hidden layers with 64 neurons per layer.

For the target study, network output and target errors of \eqref{error-Mean-L2} and \eqref{error-target} with training targets generated from forward Euler, Runge-Kutta2 and Runge-Kutta4 with mesh size $\Delta=0.05$ are displayed in Figure \ref{fig:ex7-result} part (c). Solid curves of ResNet errors merge into the dashed lines of target errors over iterations. Networks are well trained and ResNet errors are dominated by target errors.

The simulated solution curve initial conditions are picked as $u(0)=-3$ and $u'(0)=2$ and runs up to final time $T=10$. In Figure \ref{fig:ex7-result} part (d) we present the three well trained ResNet approximations of the curve. The output symbols are drawn with a few points skipped to avoid dense representation of the curve. The three ResNet solvers behave similarly and all agree well with the reference solution. 

%%%%%%%%%%%%%%%%%%%%%%%%%%%%%%%%%%%%%%%%%%%%%%%%%%
\begin{example}\label{ex8} {\bf \emph {Fitzhugh–Nagumo equation}}
\end{example}

In this example we study the following ODE system
\begin{fleqn}
\be
 \left\{\begin{aligned}
     \dot{x_1} &= 3(x_1+x_2-\frac{1}{3}x_1^3-k),\\
     \dot{x_2} &= -\frac{1}{3}(x_1+0.8x_2-0.7),
 \end{aligned}
\right.
\ee
\end{fleqn}
which models the transmission of neural impulses along an axon. Coefficient $k$ is the external stimulus. We have $k = 0.5$ taken in this example which leads to the critical point as an unstable spiral point. 
\begin{figure}[ht]
\centering
\subfigure[Architecture study with $L_{\infty}$ error]{
\includegraphics[width=0.33\linewidth]{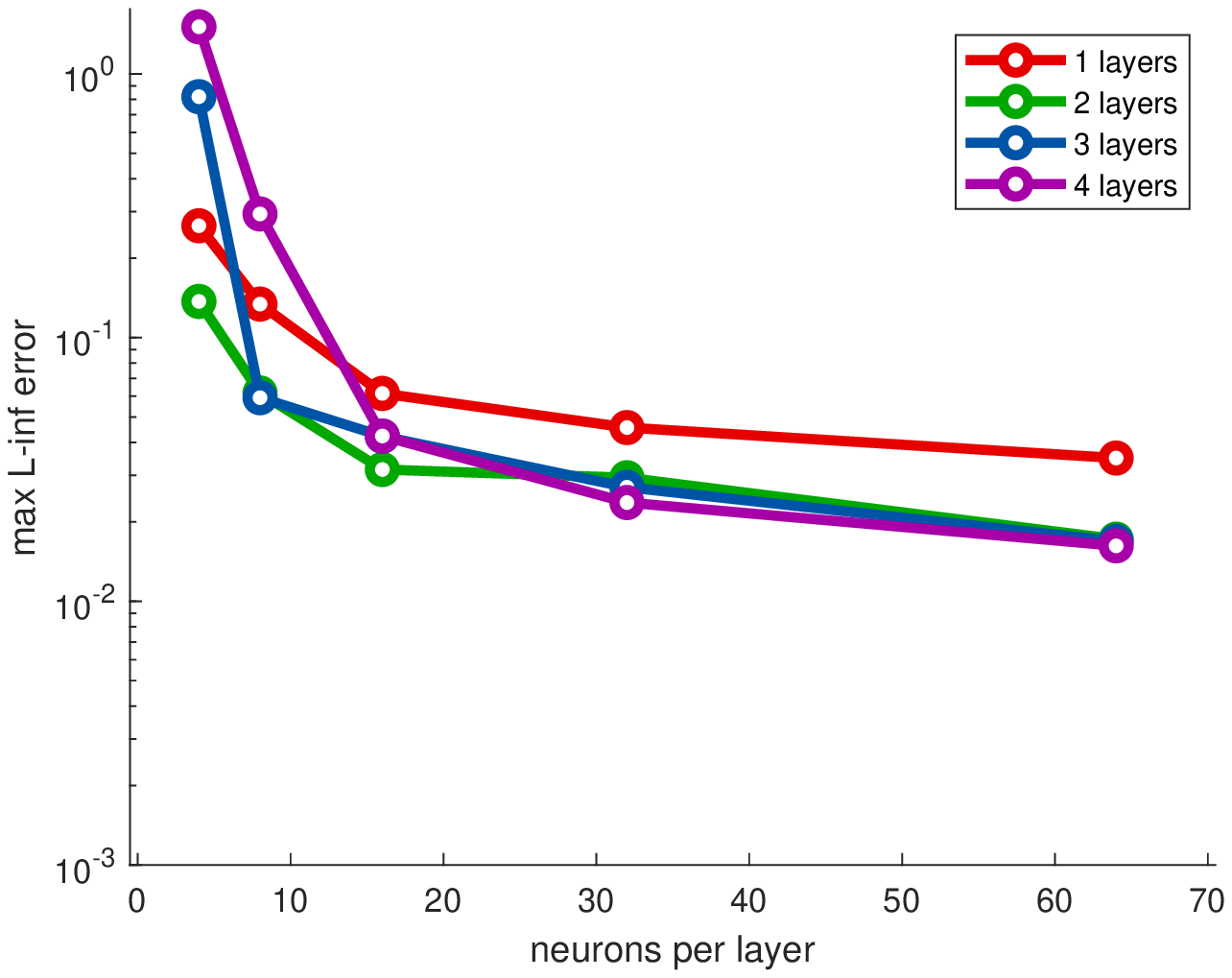}
%\caption{}
}
\subfigure[Architecture study with $L_{2}$ error]{
\includegraphics[width=0.33\linewidth]{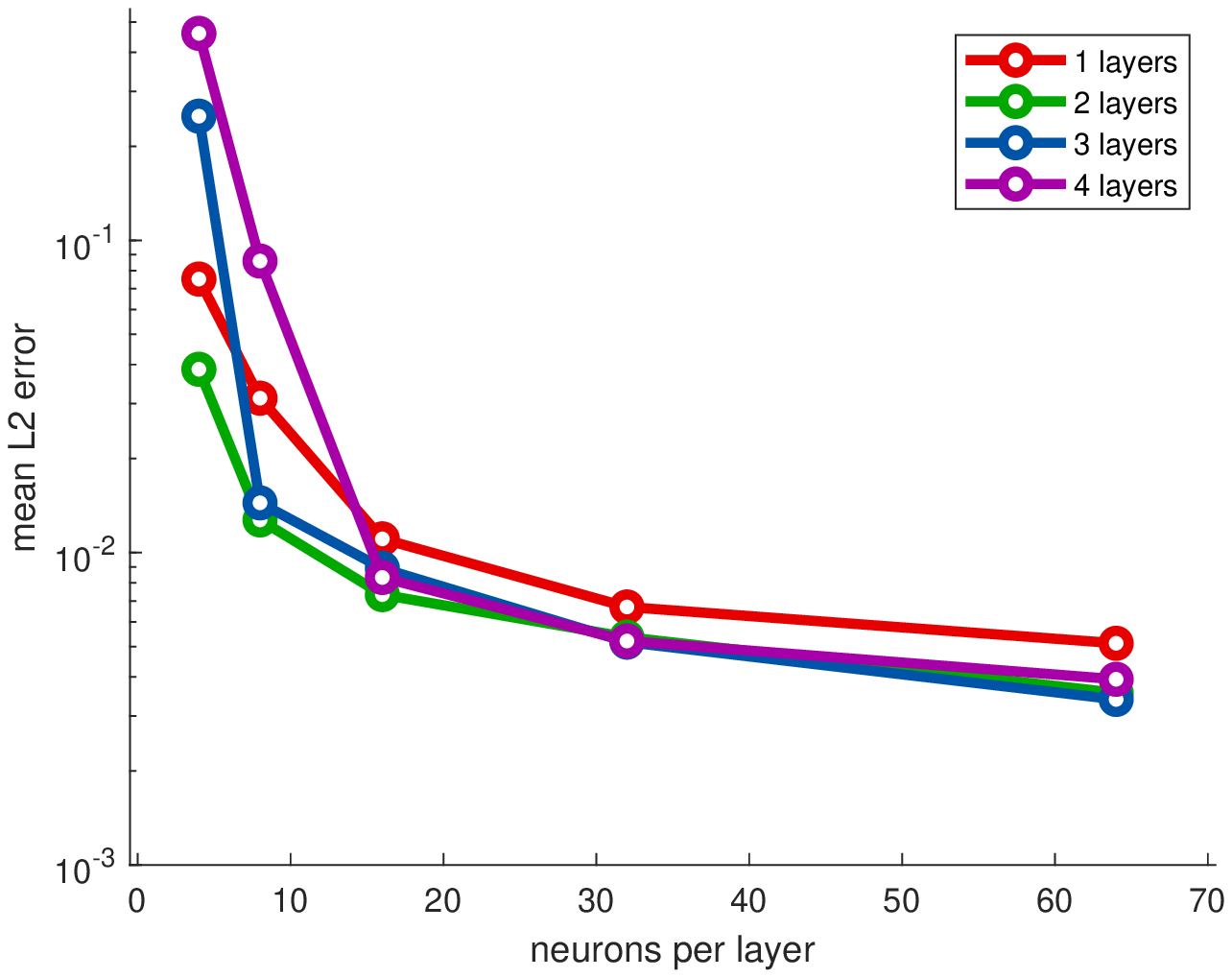}
%\caption{}
}
\quad
\subfigure[Target study on the system]{
\includegraphics[width=0.33\linewidth]{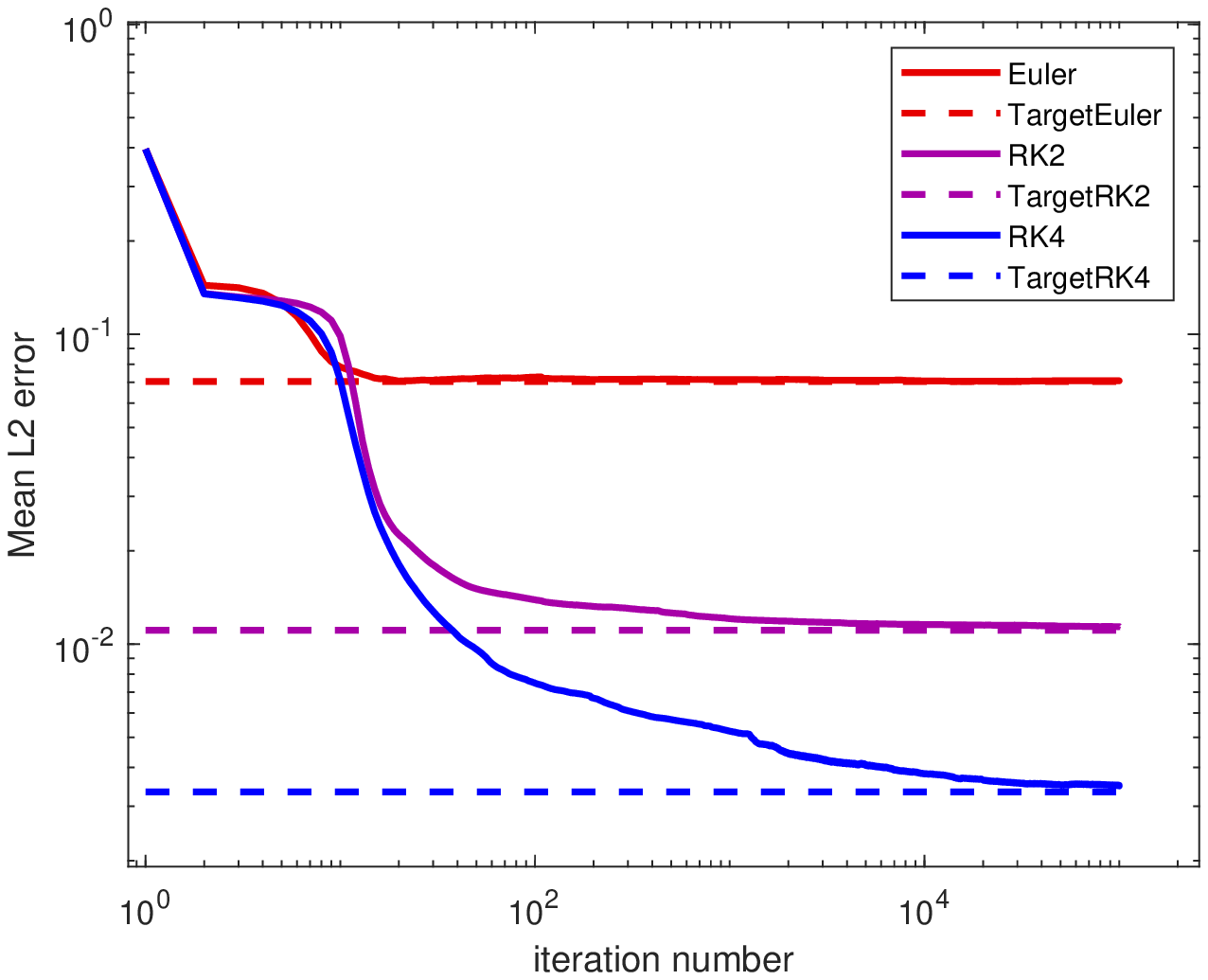}
%\caption{}
}
\subfigure[Phase plane trajectory simulation]{
\includegraphics[width=0.33\linewidth]{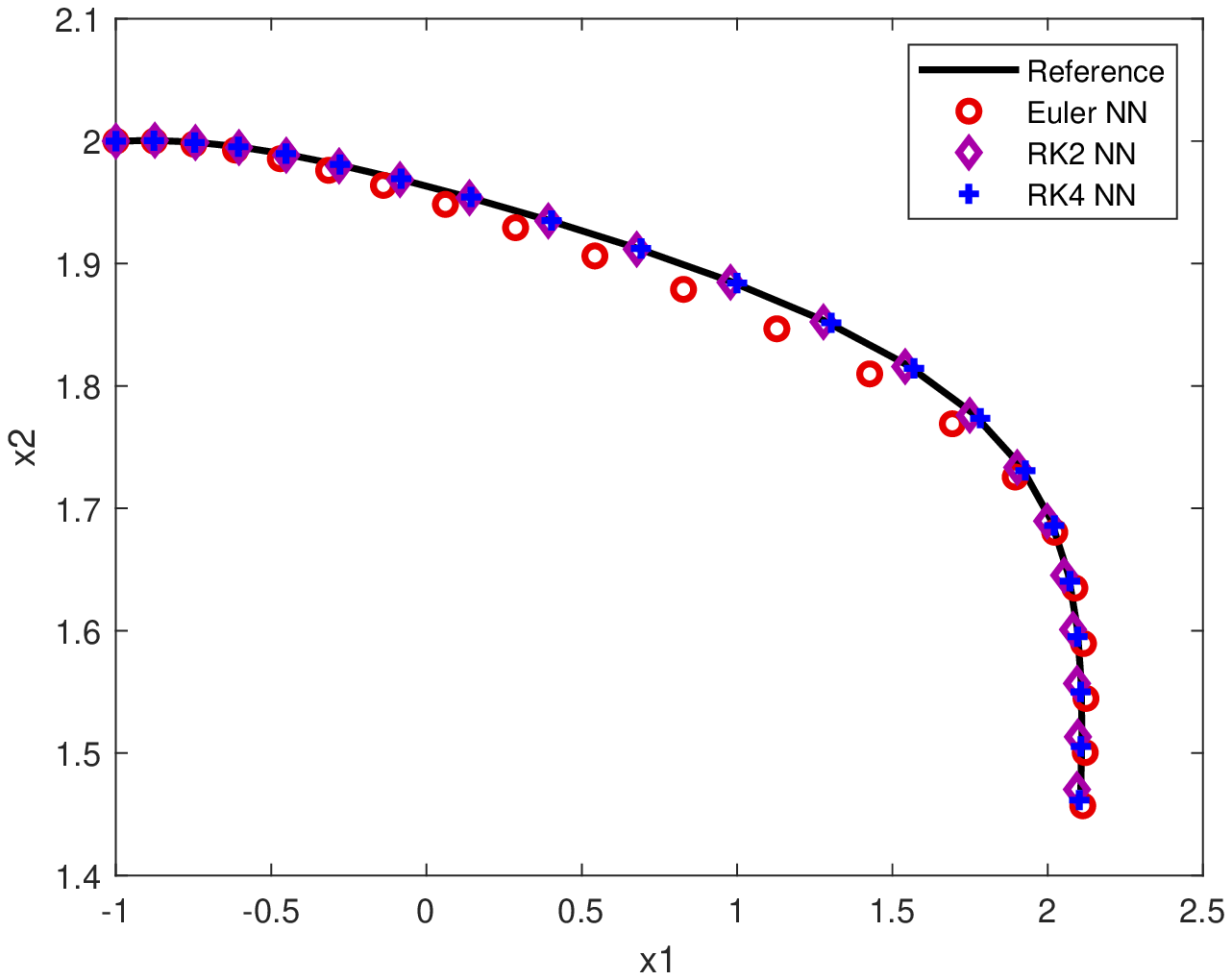}
%\caption{}
}
\caption{Example \ref{ex8} of the Fitzhugh–Nagumo equation
} 
\label{fig:ex8-result}
\end{figure}

Time lag of $\Delta= 0.05$ is applied and domain of interest is taken as $D=[-5, 5]\times [-5, 5]$. For the architecture study, maximum $L_{\infty}$ error of \eqref{error-Max-Linfy} and mean $L_2$ error of \eqref{error-Mean-L2} for each architecture setting are calculated and illustrated in Figure \ref{fig:ex8-result} of part (a) and part (b). The optimal network architecture picked has 2 hidden layers and 64 neurons per layer.

For the target study, network output and target errors of \eqref{error-Mean-L2} and \eqref{error-target} with training targets generated from forward Euler, Runge-Kutta2 and Runge-Kutta4 with mesh size $\Delta=0.05$ are displayed in Figure \ref{fig:ex8-result} part (c). ResNet errors match well with the target errors over iterations. 

The approximated curve starts at $(-1,2)$ and we run the simulation to $T=1.0$. In Figure \ref{fig:ex8-result} part (d) the three well trained ResNet solvers are applied to approximate the curve. All three ResNet output agree well with the reference solution.

%\begin{figure}[htbp]
%\centering
%%\includegraphics[width=0.45\linewidth]{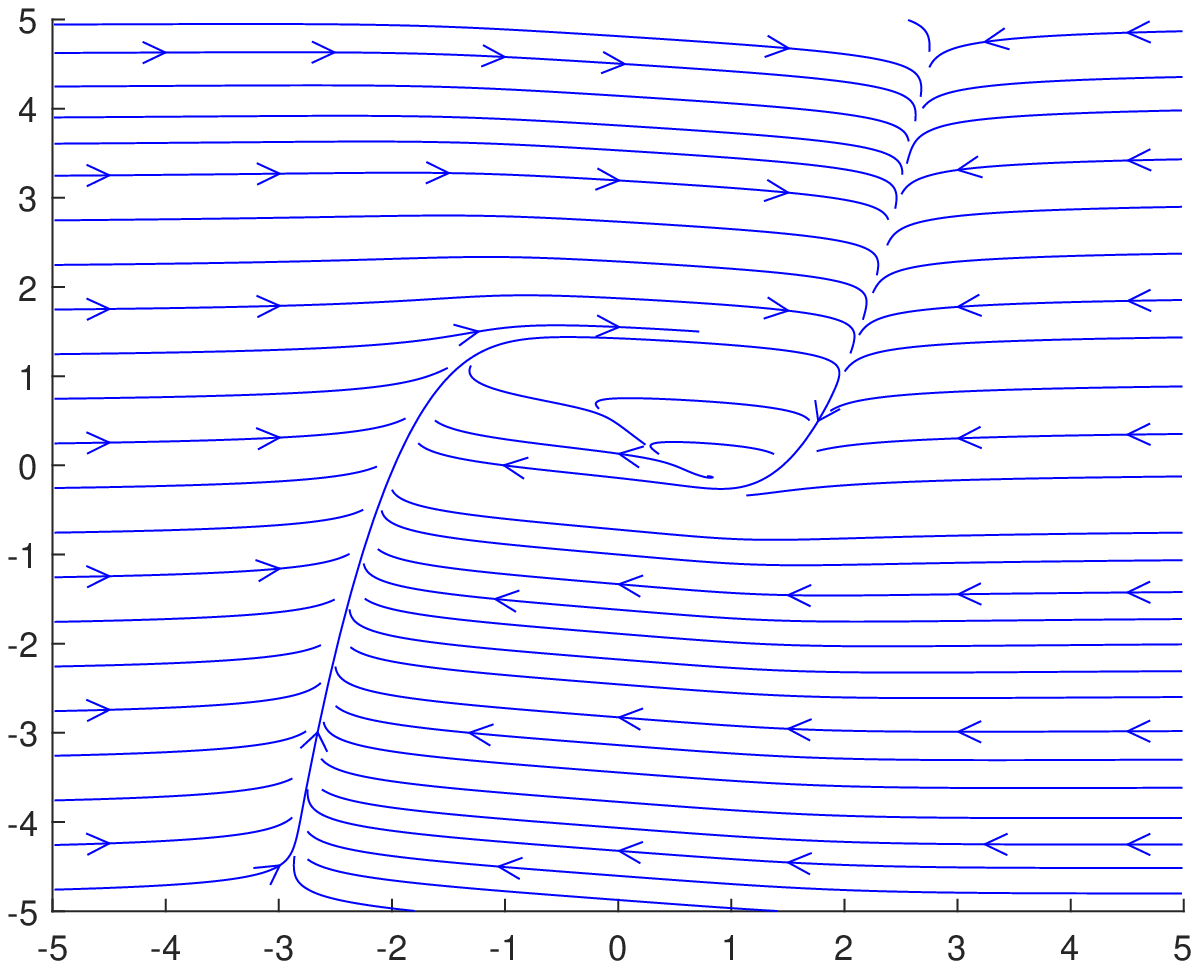}
%\includegraphics[width=0.45\linewidth]{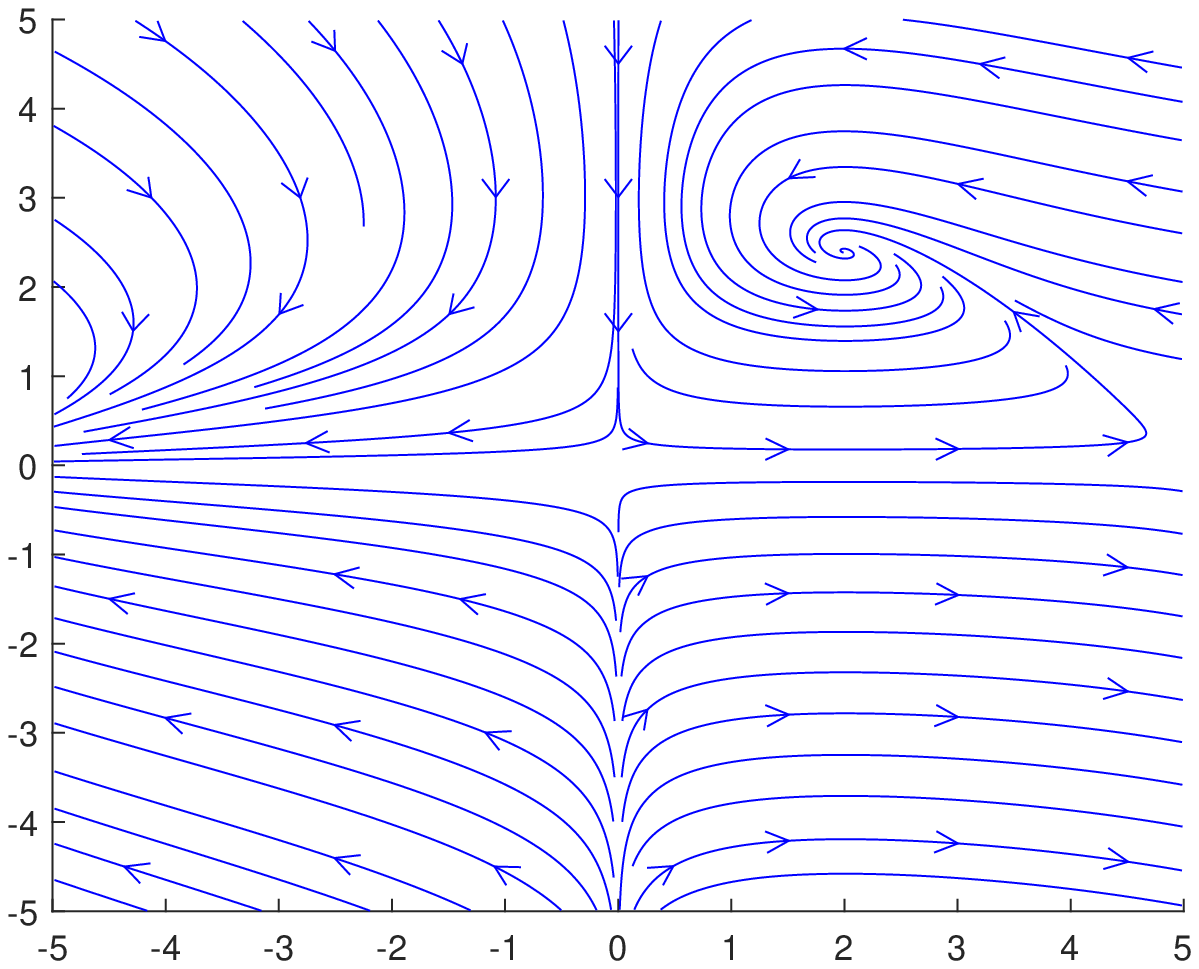}
%\caption{phase portrait for example 8 (left) and 6 (right)}
%\end{figure}

%%%%%%%%%%%%%%%%%%%%%%%%%%%%%%%%%%%%%%%%%%%%%%%%%%
\begin{example}\label{ex9} {\bf \emph {Genetic toggle switch}}
\end{example}

In this example, we consider the following nonlinear differential-algebraic equation
\begin{fleqn}
\be
 \left\{\begin{aligned}
     \dot{x_1} &= \frac{\alpha_1}{1+x_2^{\beta}}-x_1,\\
     \dot{x_2} &= \frac{\alpha_2}{1+z\gamma}-x_2,\\
     z&=\frac{x_1}{(1+[IPDG]/K)^{\eta}}.
 \end{aligned}
\right.
\ee
\end{fleqn}
which is used to model a genetic toggle switch in Escherichia coli. It is composed of two repressors and two constitutive promoters, where each promoter is inhibited by the repressor that is transcribed by the opposing promoter. Details of experimental measurement can be found in \cite{Chartrand-2011}. 

\begin{figure}[htbp]
\centering
\subfigure[Architecture study with $L_{\infty}$ error]{
\includegraphics[width=0.31\linewidth]{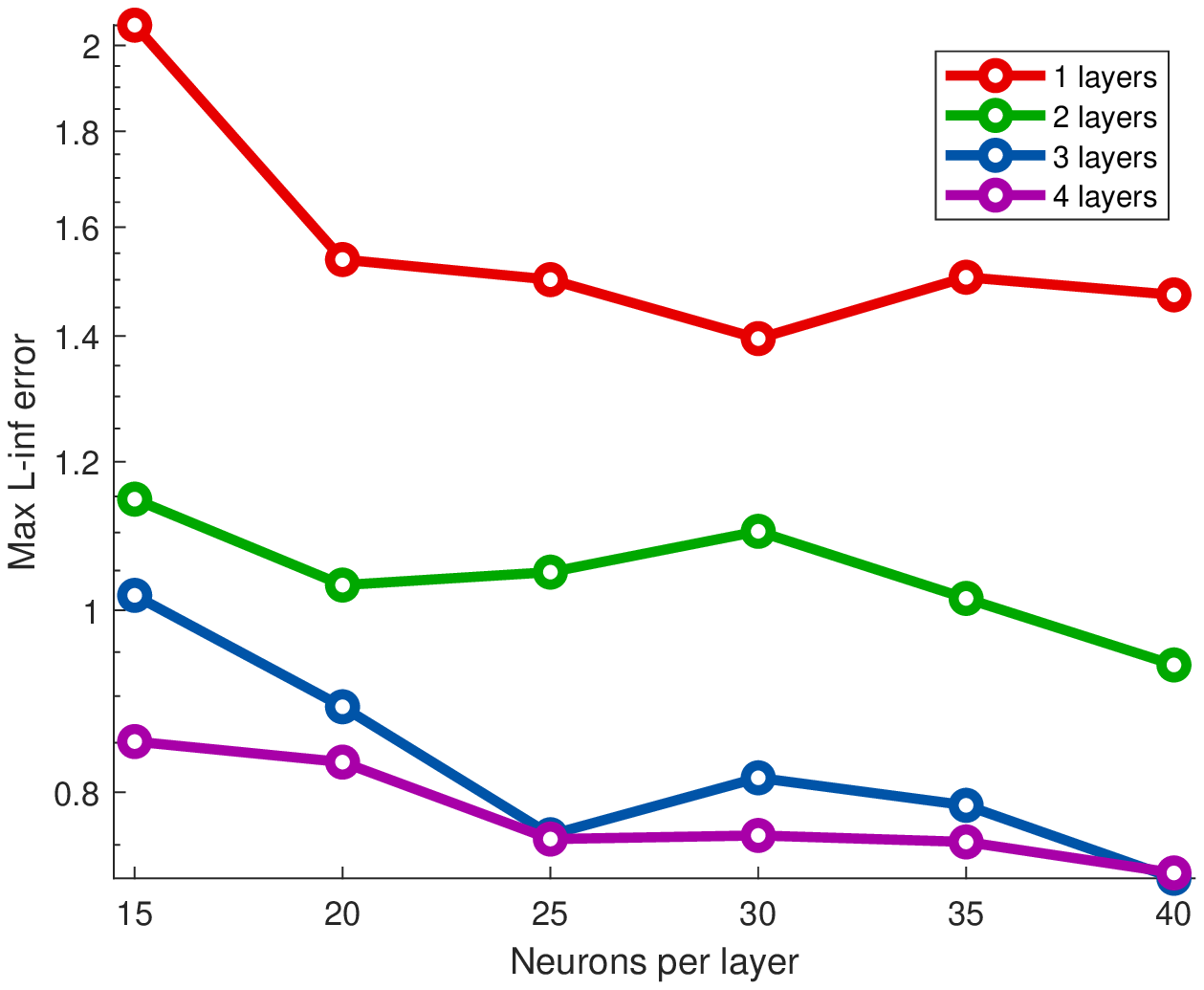}
}
\subfigure[Architecture study with $L_2$ error]{
\includegraphics[width=0.31\linewidth]{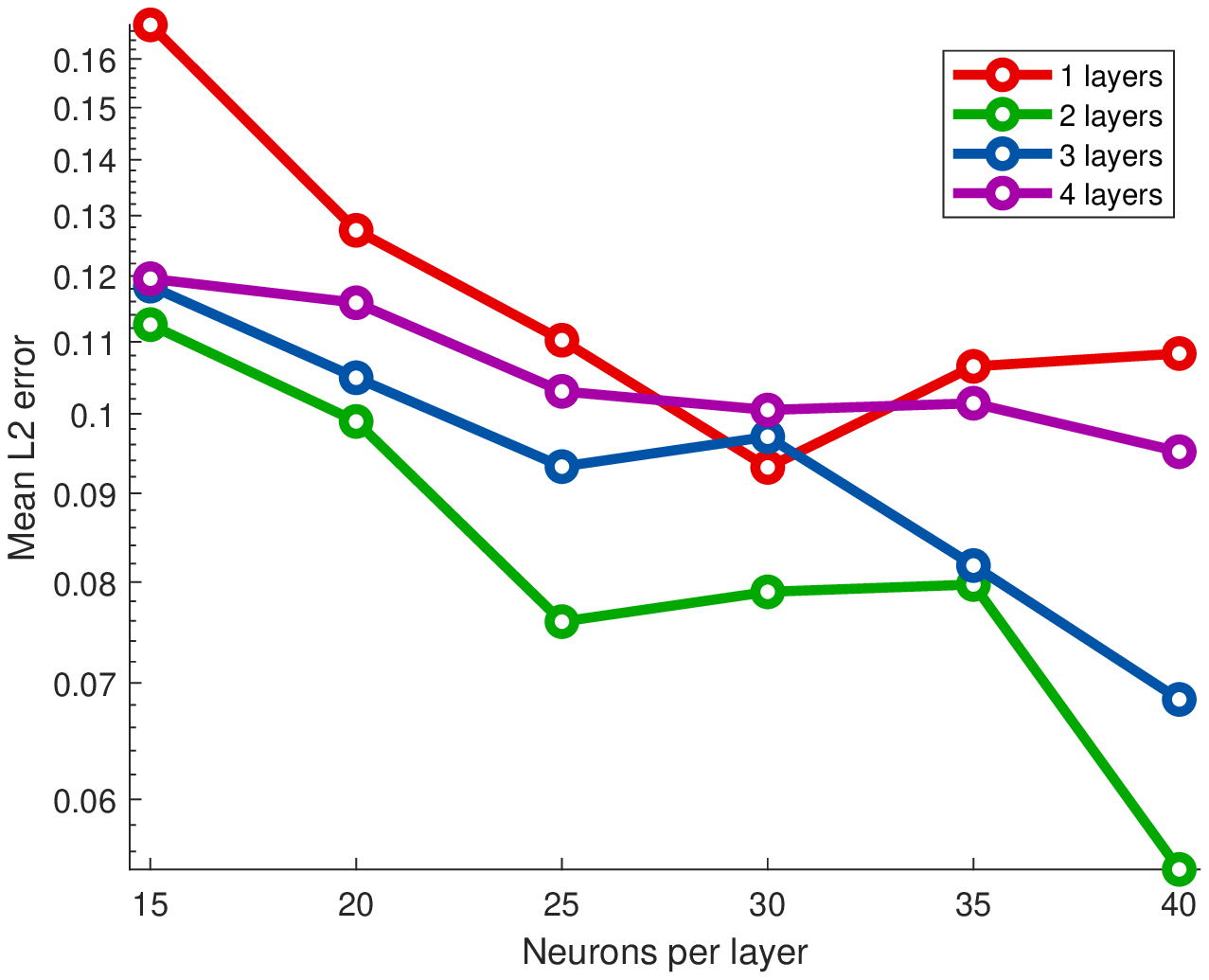}
}\quad
\subfigure[Target study on the system]{
\includegraphics[width=0.31\linewidth]{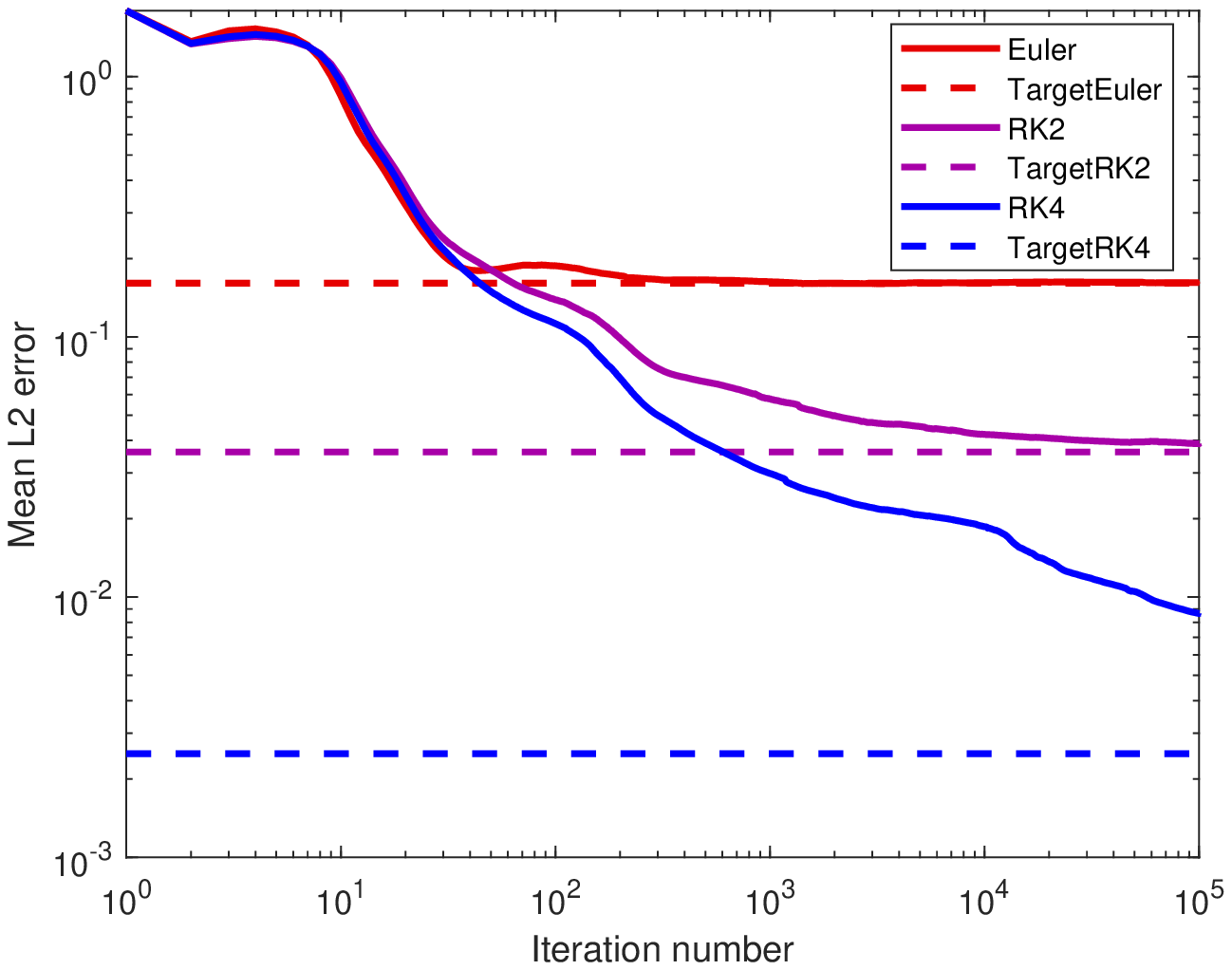}
}
\quad
\subfigure[Plase plane trajectory simulation]{
\includegraphics[width=0.31\linewidth]{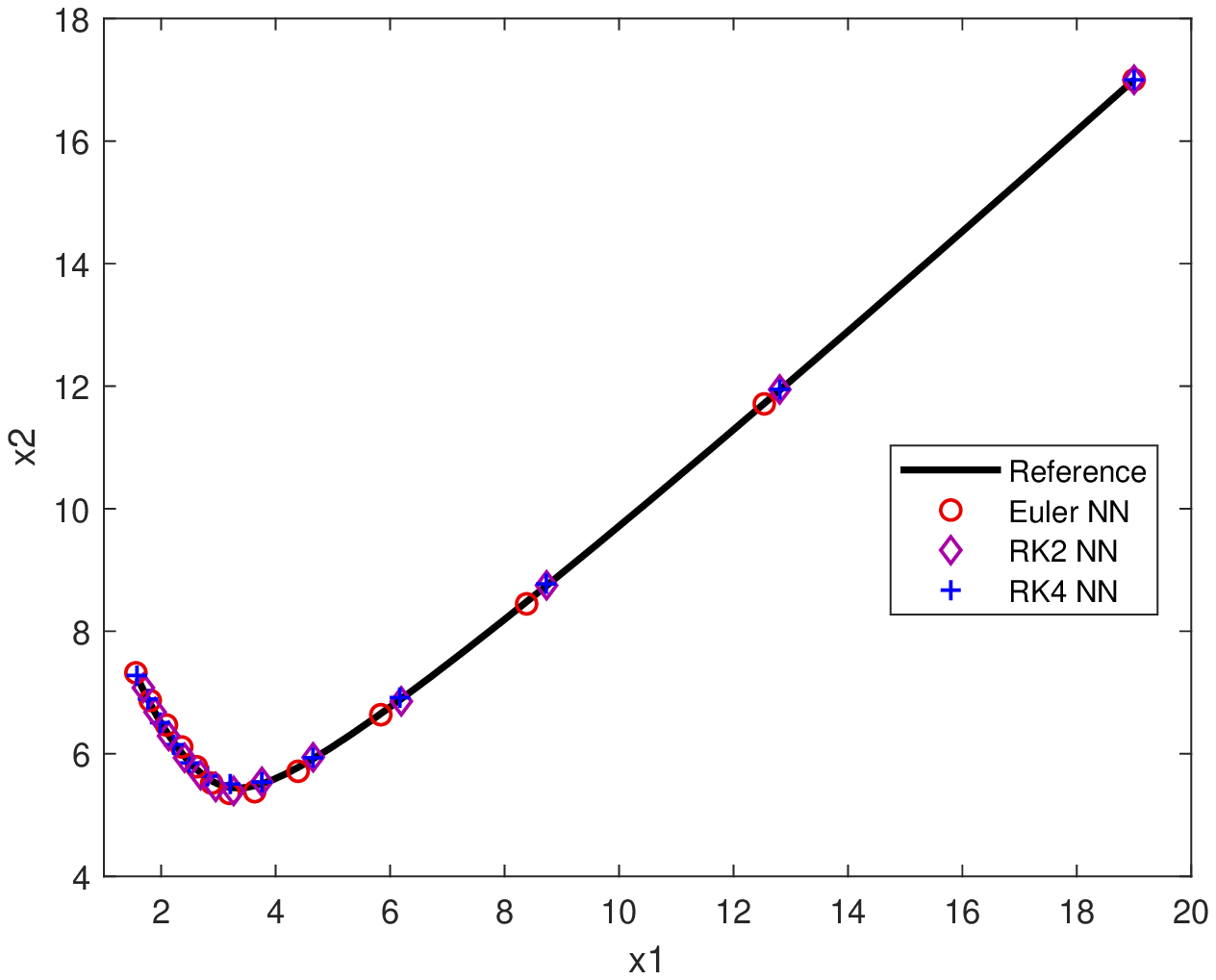}
}
\caption{Example \ref{ex9} of Genetic switch model
} 
\label{fig:ex9-results}
\end{figure}

Variables $x_1$ and $x_2$ denote the concentration of the two repressors. Coefficients $\alpha_1$ and $\alpha_2$ are the effective rates of the synthesis of the repressors. We have $\beta$ and $\gamma$ represent cooperativity of repression of the two promoters, respectively. We have $[IPTG]$ as the concentration of IPTG, the chemical compound that induces the switch, with $K$ the dissociation constant of IPTG.
In this example, we take $\alpha_1=156.25$, $\alpha_2=15.6$, $\gamma=1$, $\beta=2.5$, $K=2.9618\times 10^{-5}$ and $[IPDG]=10^{-5}$.

Domain of interested is $D =[0, 20]\times[0, 20]$. We observe that because points in $D$ can have a large magnitude, networks tend to fail during training. To remedy this, we scale the training set down by a factor of twenty, so that the network will be effectively trained on the domain $D'=[0,1]^2$. 

We conclude that 2 layers of 40 neurons is our optimal architecture. Target study results are displayed for in Figures \ref{fig:ex9-results}(c) and (d). Trajectory study is done with initial condition of $\vx = [19,17]$ for a total time interval of $T = 5.0$. For clarity, we only display every fourth network output.

%%%%%%%%%%%%%%%%%%%%%%%%%%%%%%%%%%%%%%%%%%%%%%%%%%
\begin{example}\label{ex10} {\bf \emph {Nonlinear electric network}}
\end{example}

In this example we consider another nonlinear differential-algebraic equation modeling a nonlinear electric network \cite{Pulch-2013}
\begin{fleqn}
\be
 \left\{\begin{aligned}
     \dot{x_1} &= v_2/C,\\
     \dot{x_2} &= x_1/L,\\
     0&=v_1-(G_0-G_{\infty})U_0~ \tanh(x_1/U_0)-G_{\infty}x_1,\\
     0&=v_2+x_2+v_1.
 \end{aligned}
\right.
\ee
\end{fleqn}
Here $x_1$ represents the node voltage and $x_2$, $v_1$ and $v_2$ are branch currents. Following \cite{Pulch-2013}, the physical parameters are specified as $C = 10^{-2}$, $L = 1$, $U_0 = 1$, $G_0 =-0.1$ and $G_{\infty} = 0.25$. All learning data pairs are taken from the domain of interest of $D = [-2, 2]\times [-0.2, 0.2]$. Time lag is taken as $\Delta =0.05$.

\begin{figure}[htbp]
\centering
\subfigure[Architecture study with $L_{\infty}$ error]{
\includegraphics[width=0.33\linewidth]{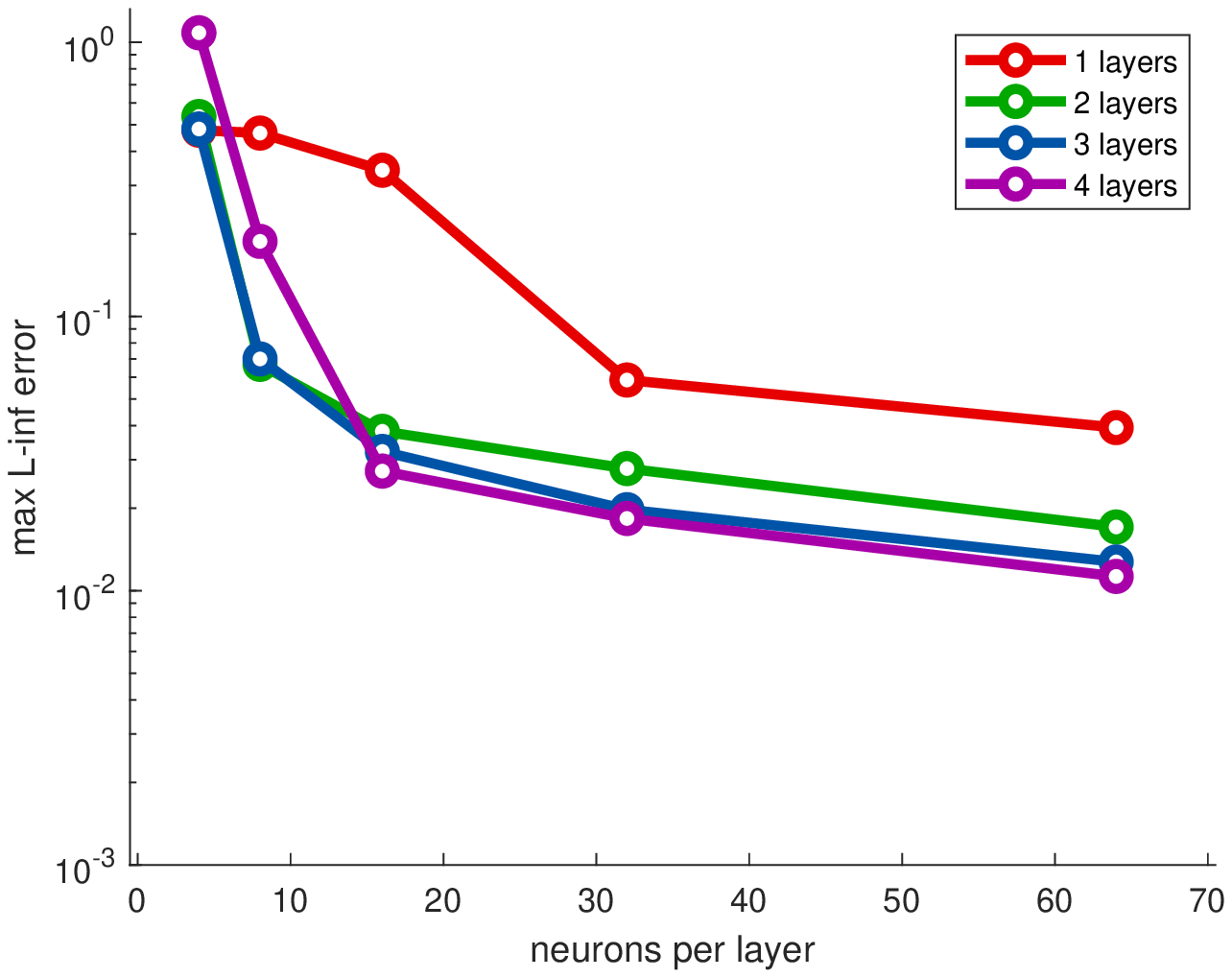}
%\caption{}
}
\subfigure[Architecture study with $L_{2}$ error]{
\includegraphics[width=0.33\linewidth]{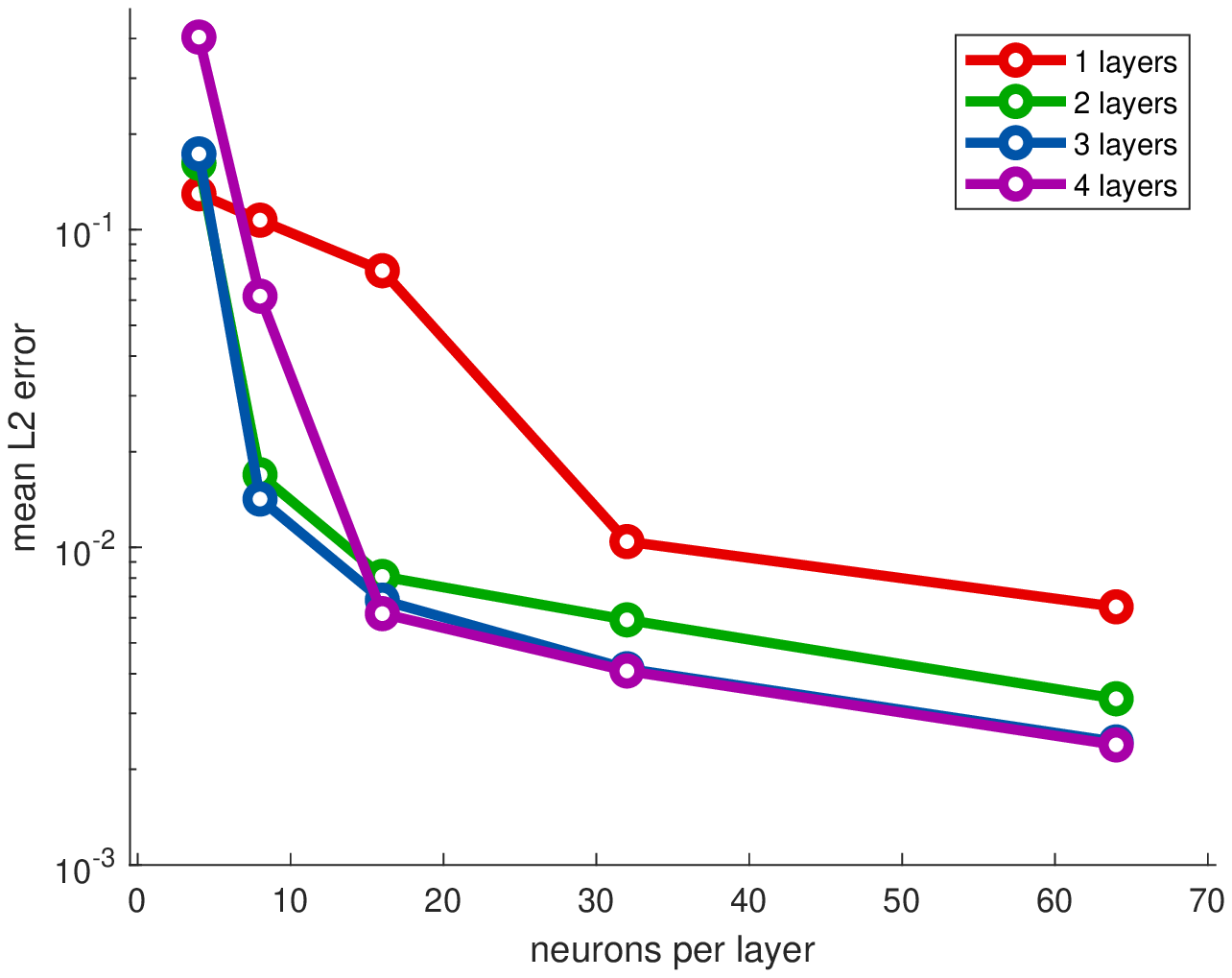}
%\caption{}
}
\quad
\subfigure[Target study on the system]{
\includegraphics[width=0.33\linewidth]{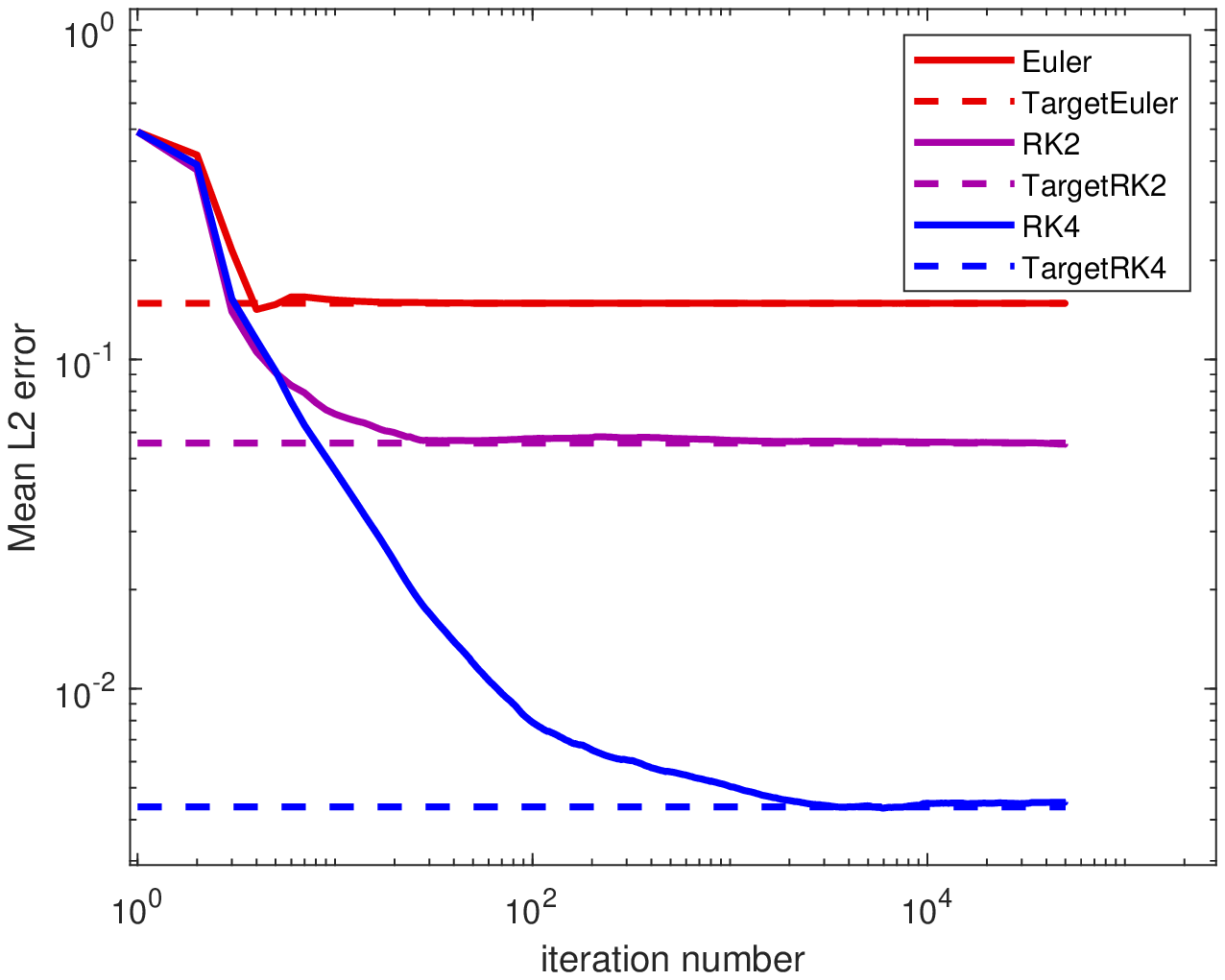}
%\caption{}
}
\subfigure[Phase plane trajectory simulation]{
\includegraphics[width=0.33\linewidth]{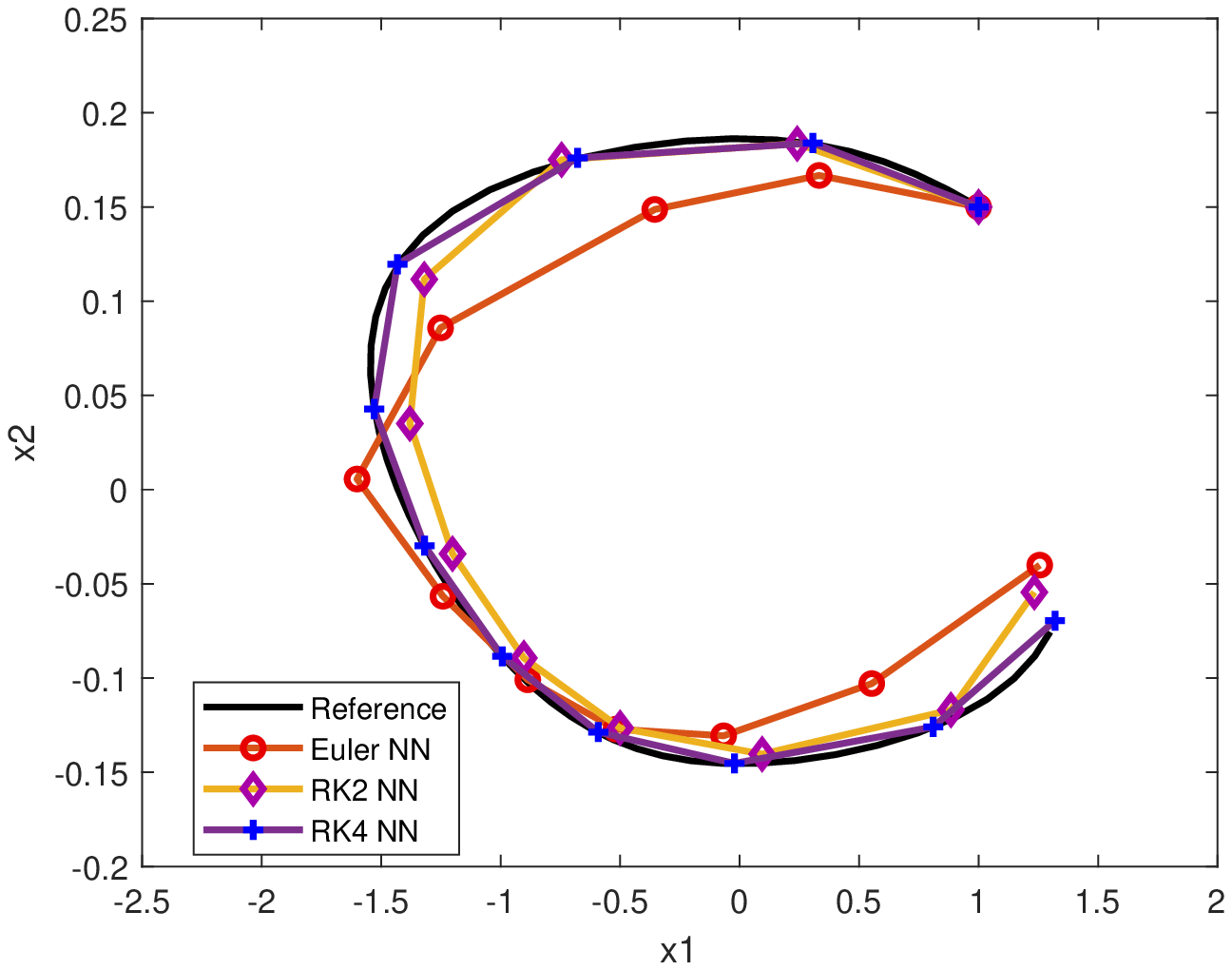}
%\caption{}
}
\caption{Example \ref{ex10} of the nonlinear electric network
} 
\label{fig:ex10-result}
\end{figure}
For the architecture study, maximum $L_{\infty}$ error of \eqref{error-Max-Linfy} and mean $L_2$ error of \eqref{error-Mean-L2} for each architecture setting are calculated and illustrated in Figure \ref{fig:ex10-result} of part (a) and part (b). The optimal network has 2 hidden layers and 64 neurons per layer.

For the target study, network output and target errors of \eqref{error-target} and \eqref{error-Mean-L2} with training targets generated from forward Euler, Runge-Kutta2 and Runge-Kutta4 with mesh size $\Delta=0.05$ are displayed in Figure \ref{fig:ex10-result} part (c). ResNet errors match well with the target errors over iterations. Target errors tend to be consistent to the analyzed error orders of $O(\Delta^2)$, $O(\Delta^3)$ and $O(\Delta^5)$ of the three finite difference schemes, even though the ODE system is nonlinear. 

The displayed trajectory starts at $\vx=(1, 0.15)$ and runs up to $T=0.5$.
In Figure \ref{fig:ex10-result} part (d) the three well trained ResNet solvers are applied to approximate the curve. The trajectory changes quickly with $\Delta=0.05$. ResNet solver from forward Euler gives large error and ResNet from Runge-Kutta4 agrees the best with the reference solution.

\section{Conclusions}
\label{S:5}

In this article, we consider the integral or weak formulation of ordinary differential equations and apply residual neural networks solving the ODEs. Specifically we investigate the optimal choice of hidden layers and neurons per layer as the ResNet architecture study. We also investigate the accuracy of ResNet solvers approximating the ODE solutions. Numerical tests show the accuracy of ResNet solver is dominated by the quality of the training target. Sequence of numerical examples verify the ResNet solver can be as accurate as any high order one step method, even the ResNet is implemented similarly to the first order forward Euler scheme. 

%% The Appendices part is started with the command \appendix;
%% appendix sections are then done as normal sections
%% \appendix

%% \section{}
%% \label{}

%% References
%%
%% Following citation commands can be used in the body text:
%% Usage of \cite is as follows:
%%   \cite{key}          ==>>  [#]
%%   \cite[chap. 2]{key} ==>>  [#, chap. 2]
%%   \citet{key}         ==>>  Author [#]

%% References with bibTeX database:

%% (1) according to the order of show up 
% \bibliographystyle{model1-num-names}
%% (2) according to the order of show up
\bibliographystyle{elsarticle-num}
%% (3) according to author's last name alphabetic order and full name showed up
%\bibliographystyle{plain}

\bibliography{Accuracy_and_architecture_studies_of_DNN}

%% Authors are advised to submit their bibtex database files. They are
%% requested to list a bibtex style file in the manuscript if they do
%% not want to use model1-num-names.bst.

%% References without bibTeX database:

% \begin{thebibliography}{00}

%% \bibitem must have the following form:
%%   \bibitem{key}...
%%

% \bibitem{}

% \end{thebibliography}

\end{document}